\def\disp{\displaystyle}
\newcommand\R{\mathbb{R}}
\newcommand{\p}{\partial}
\newcommand\1{{\bf 1}}
\def\kxi{\begin{pmatrix}1\\ \xi \\ \gamma\end{pmatrix}}
\def\F{\mathcal F}
\newtheorem{proposition}{Proposition}[section]
\newtheorem{theorem}{Theorem}[section]
\newtheorem{remark}[theorem]{Remark}
\newtheorem{lemma}[theorem]{Lemma}
\newenvironment{proof}[1][Proof]{\begin{trivlist}
\item[\hskip \labelsep {\bfseries #1}] \it }{$\blacksquare$\end{trivlist}}
\definecolor{red}{rgb}{1,0,0}
\title{Numerical approximation of the 3d hydrostatic Navier-Stokes system with free surface.}
\author{S. Allgeyer\footnote{Research
  School of Earth Sciences, Australian National University, Canberra,
  ACT, Australia}, M.-O.~Bristeau\footnote{Inria Paris, 2 rue Simone Iff, CS 42112, 75589
  Paris Cedex 12, France}$\;^{,}$\footnote{Sorbonne Universit\'e, Lab. Jacques-Louis Lions, 4 Place Jussieu, F-75252 Paris cedex 05}$\;^{,}$\footnote{CEREMA, 134 rue de Beauvais, F-60280
Margny-L\`es-Compi\`egne, France}$\;$,
D.~Froger\footnotemark[2]$\;^{,}$\footnotemark[3]$\;^{,}$\footnotemark[4]$\;$,
R.~Hamouda\footnotemark[2]$\;^{,}$\footnotemark[3]$\;^{,}$\footnotemark[4]$\;$,
A.~Mangeney\footnote{Univ. Paris Diderot, Sorbonne Paris Cit\'e, Institut de Physique du Globe de Paris, Seismology Group, 1 rue Jussieu, Paris F-75005, France}$\;^{,}$\footnotemark[2]$\;^{,}$\footnotemark[3]$\;^{,}$\footnotemark[4]$\;$,\\
  J.~Sainte-Marie\footnotemark[2]$\;^{,}$\footnotemark[3]$\;^{,}$\footnotemark[4]$\;^{,}$\footnote{Corresponding author: Jacques.Sainte-Marie@inria.fr}, F.~Souill\'e\footnotemark[2]$\;^{,}$\footnotemark[3]$\;^{,}$\footnotemark[4]$\;$ and~M. Vall\'ee\footnotemark[5]}
\date{\today}
\begin{document}

\maketitle


\begin{abstract}
In this paper we propose a stable and robust strategy to approximate
the 3d incompressible hydrostatic Euler
and Navier-Stokes systems with free surface.

Compared to shallow water approximation of the Navier-Stokes system, the idea is to use a Galerkin type approximation of the velocity field
with piecewise constant basis functions in
order to obtain an accurate
description of the vertical profile of the horizontal velocity. Such a
strategy has several advantages. It allows
\begin{itemize}
\item[$\circ$] to rewrite the Navier-Stokes equations under the form
of a system of conservation laws with source terms,
\item[$\circ$] the easy handling of the free surface, which does not
  require moving meshes,
\item[$\circ$] the possibility to take advantage of robust and
  accurate numerical techniques developed in extensive amount for
  Shallow Water type systems.
\end{itemize}
Compared to previous works of some of the
authors, the three dimensional case is studied in this paper.
We show that the model admits a kinetic interpretation including the
vertical exchanges terms, and we use
this result to formulate a robust finite volume scheme for its
numerical approximation. 
All the aspects of the discrete scheme (fluxes, boundary
conditions,\ldots) are completely described and the stability
properties of the proposed numerical scheme (well-balancing,
positivity of the water depth,\ldots) are discussed.
We validate the model and the discrete scheme with some numerical
academic examples (3d non stationary analytical solutions) and illustrate the capability
of the discrete model to reproduce realistic tsunami waves.
\end{abstract}

{\bf Keywords} Free surface flows,
Navier-Stokes equations, Euler system, Free surface, 3d model, Hydrostatic assumption, Kinetic description, Finite volumes.

\section{Introduction}

In this paper we present layer-averaged Euler and Navier-Stokes models
for the numerical simulation of incompressible free surface
flows over variable topographies.
We are mainly interested in applications to geophysical water flows such
as tsunamis, lakes, rivers, estuarine waters, hazardous flows in the context
either of advection dominant
flows or of wave propagation.

The simulation of these flows requires stable, accurate,
conservative schemes able to sharply resolve
stratified flows, to
handle efficiently complex topographies and free surface deformations,
and to capture robustly wet/dry fronts. In addition, the application to
realistic three-dimensional problems demands efficient methods with
respect to computational cost.
The present work is aimed at building a simulation tool endowed with
these properties.

Due to computational issues associated with the free surface Navier-Stokes or
  Euler equations, the simulations of geophysical flows are often carried
  out with shallow water type
  models of reduced complexity. Indeed, for vertically
  averaged models such as the Saint-Venant system \cite{saint-venant},
  efficient and robust numerical techniques
  (relaxation schemes \cite{bouchut}, kinetic schemes
  \cite{perthame,JSM_entro}, \ldots) are available and avoid to deal with moving meshes.
  In order to describe and simulate complex flows where the velocity
  field cannot be approximated by its vertical mean, multilayer models
  have been developed~\cite{audusse,bristeau3,bristeau2,bouchut1,pares,pares1,ovsyannikov,vreug,multi-bz}.
  Unfortunately these models are physically relevant for non miscible fluids.
  In~\cite{nieto_chacon,JSM_M2AN,JSM_JCP,JSM_M3AS,BDGSM,nieto_diaz_mangeney_reina}, some authors have proposed
  a simpler and more general formulation for multilayer model with mass
  exchanges between the layers. The obtained model has the form of a
  conservation law with source terms and presents remarkable differences with respect
 to classical models for non miscible fluids.
 In the multilayer approach with mass exchanges, the layer partition is merely a discretization artefact, and it
 is not physical. Therefore, the internal layer boundaries do not necessarily
 correspond to isopycnic surfaces.
 A critical distinguishing feature of our model is that
 it allows fluid circulation between layers. This changes dramatically
 the properties of the model and its ability to
 describe flow configurations that
 are crucial for the
 foreseen applications, such as recirculation zones.

Compared to previous works of some of the
authors~\cite{JSM_M2AN,JSM_JCP,BDGSM}, that handled only the 2d configurations, this paper deals with
the 3d case on unstructured meshes reinforcing the need of efficient
numerical schemes. The key points of this paper are the following
\begin{itemize}
\item A formulation of the 3d Navier-Stokes system under the form of a
  set of conservation laws with source terms on a fixed 2d domain.
\item A kinetic interpretation of the model allowing to derive a
  robust and accurate numerical scheme. Notice that the kinetic
  interpretation is valid for the vertical exchange terms arising in
  the multilayer description.
\item Choosing a Newtonian rheology for the fluid, we propose
  energy-consistent~-- at the continuous and discrete levels~-- models
  extending previous results~\cite{BDGSM} in the 3d context.
\item We give a
 complete description of all the ingredients of the numerical scheme (time scheme, fluxes,
 boundary conditions,\ldots). Even if some parts have been already published in 2d, the objective is to have a self-contained paper for 3d applications.
\item The numerical approximation of the 3d Navier-Stokes system is
  endowed with strong stability properties (consistency, well-balancing, positivity of the water depth, wet/dry interfaces treatment,\ldots).
\item Using academic examples, we prove the accuracy of the proposed
  numerical procedure especially convergence curves towards a 3d non-stationary analytical solution with wet-dry interfaces have been obtained (see paragraph~\ref{subsubsec:bowl}).
\end{itemize}

Most of the numerical models in the literature for environmental
stratified flows use finite difference or finite element schemes
solving the free surface Navier-Stokes equations. We refer in particular
 to \cite{casulli,hervo} and references therein for a
 partial review of these methods. Since the layer-averaged model has
 the form of a conservation law with source terms, we single out a
 finite volume scheme. Moreover, the
 kinetic interpretation of the continuous model leads to a kinetic
 solver endowed with strong stability properties (well-balancing,
 domain invariant, discrete entropy~\cite{ns_entro}). The viscous terms are discretized using a finite element approach. Considering various analytical
 solutions we emphasize the accuracy of the discrete model and we also
 show the applicability of the model to real geophysical situations. The numerical method is implemented in Freshkiss3d~\cite{freshkiss3d} and other various academic tests are documented on the web site.

The outline of the paper is as follows. In Section~\ref{sec:NS}, we
recall the incompressible and hydrostatic Navier-Stokes
equations and the associated boundary conditions.
The layer-averaged system obtained by a vertical
discretization of the hydrostatic model is described in Section~\ref{sec:ML}. The kinetic interpretation of the model is given in
Section~\ref{sec:kin} allowing to derive a numerical scheme presented in Section~\ref{sec:NumMet}.
 Numerical validations and application to a real tsunami event are shown in Section~\ref{sec:NumRes}.

\section{The hydrostatic Navier-Stokes system}
\label{sec:NS}

We consider the three-dimensional hydrostatic Navier-Stokes system \cite{lions}
  describing a free surface gravitational flow moving over a bottom
  topography $z_b(x,y)$. For free surface flows, the hydrostatic assumption consists in
  neglecting the vertical acceleration, see \cite{brenier,grenier,masmoudi,bresch_lemoine} for justifications of such hydrostatic models.

The incompressible and hydrostatic Navier-Stokes system consists in the model
\begin{eqnarray}
& & \nabla. {\bf U} = 0, \label{eq:incomp}\\
& & \frac{\p {\bf u}}{  \p t} + \nabla_{x,y}. ( {\bf u}\otimes {\bf u})+
\frac{\partial {\bf u}w}{\partial z} =  \frac{1}{\rho_0} \nabla_{x,y} . \sigma + \frac{\mu}{\rho_0} \frac{\partial^2 {\bf u}}{\partial z^2},
\label{eq:mvthm}\\
& & \frac{\partial p}{\partial z} = - \rho_0 g,\label{eq:mvthm1}
\end{eqnarray}
where
${\bf U}(t,x,y,z)=(u,v,w)^T$ is the velocity,
${\bf u}(t,x,y,z)=(u,v)^T$ is the horizontal velocity,
${\bf \sigma} = -p I_d + \mu \nabla_{x,y} {\bf u}= -p I_d + \Sigma$ is the total stress tensor, $p$ is the fluid pressure,
$g$ represents the gravity acceleration and $\rho_0$ is the fluid density. The quantity $\nabla$ denotes $\nabla = \begin{pmatrix} \frac{\partial }{\partial x},
  \frac{\partial}{\partial y},
  \frac{\partial}{\partial z}\end{pmatrix}^T$,
$\nabla_{x,y}$ corresponds to the projection of $\nabla$ on the
horizontal plane i.e. $\nabla_{x,y} = \begin{pmatrix} \frac{\partial }{\partial x},
  \frac{\partial}{\partial y} \end{pmatrix}^T$. We assume a Newtonian fluid, $\mu$ is the viscosity coefficient and we will make use of $\nu=\mu/\rho_0$.

We consider a free surface flow (see Fig. \ref{fig:free}-{\it (a)}), therefore we assume
$$z_b(x,y)\leq z\leq \eta(t,x,y) := h(t,x,y)+z_b(x,y),$$
 with $z_b(x,y)$ the bottom elevation and $ h(t,x,y)$ the water
 depth. Due to the hydrostatic assumption in Eq.~\eqref{eq:mvthm1},
the pressure gradient in Eq.~\eqref{eq:mvthm} reduces to
$\rho_0 g\nabla_{x,y} \eta$.

\subsection{Boundary conditions}

\subsubsection{Bottom and free surface}

Let ${\bf n}_b$ and ${\bf n}_s$ be the unit outward
 normals at the bottom and at the free surface respectively defined by
 (see Fig~\ref{fig:free}-{\it (a)})
$${\bf n}_b = \frac{1}{\sqrt{1 + |\nabla_{x,y} z_b|^2}}
  \left(\begin{array}{c} \nabla_{x,y} z_b\\ -1 \end{array}
  \right),\quad\mbox{and}\quad {\bf n}_s = \frac{1}{\sqrt{1 + |\nabla_{x,y} \eta|^2}}
  \left(\begin{array}{c} -\nabla_{x,y} \eta\\ 1 \end{array} \right).$$
The system~\eqref{eq:incomp}-\eqref{eq:mvthm1} is completed with 
boundary conditions.
On the bottom we prescribe an impermeability condition
\begin{equation}
{\bf U}.{\bf n}_b = 0,
\label{eq:bottom}
\end{equation}
whereas on the free surface, we impose
the kinematic boundary condition
\begin{equation}
\frac{\partial \eta}{\partial t} + {\bf u}(t,x,y,\eta).\nabla_{x,y} \eta - w(t,x,y,\eta) = 0.
\label{eq:free_surf}
\end{equation}

Concerning the dynamical boundary conditions, at the bottom we impose a friction condition given e.g. by a Navier law
\begin{equation}
\nu\sqrt{1+|\nabla_{x,y} z_b|^2}\frac{\partial {\bf u}}{\partial {\bf n}_b}  = -\kappa {\bf u},
\label{eq:uboundihmf}
\end{equation}
 with $\kappa$ a Navier coefficient.
For some applications, one can choose $\kappa=\kappa(h,{\bf u}|_b)$.

At the free surface, we impose the no stress condition
\begin{equation}
\nu \frac{\partial \tilde {\bf u}}{\partial {\bf n}_s} - p{\bf n}_s = - p^a(t,x,y) {\bf n}_s + W(t,x,y){\bf t}_s.
\label{eq:uboundihm}
\end{equation}
where $\tilde {\bf u} = ({\bf u},0)^T$, $p^a(t,x,y)$ and $W(t,x,y)$ are two given quantities, $p^a$ (resp. $W$) mimics the effects of
the atmospheric pressure (resp. the wind blowing at the free surface) and ${\bf t}_s$ is a given unit horizontal vector.
Throughout the paper $p^a=0$ except in
paragraph~\ref{subsec:sol_anal_NS} where the effects of the
atmospheric pressure is considered.

\subsubsection{Fluid boundaries and solid walls}
\label{subsubsec:fluid_boundaries}

On solid walls, we prescribe an impermeability condition
\begin{equation}
{\bf U}.{\bf n} = 0,
\label{eq:slip}
\end{equation}
coupled with an homogeneous Neumann condition
\begin{equation}
\frac{\partial {\bf u}}{\partial {\bf n}} = 0,
\label{eq:neumann}
\end{equation}
${\bf n}$ being the outward normal to the considered wall.

In this paper we consider
fluid boundaries on which we prescribe
zero, one or two of the following conditions depending on the
type of the flow (fluvial or torrential) : Water level $h+z_b(x,y)$ given,
flux $h{\bf U}$ given.

The system is completed with some initial conditions
$$h(0,x,y)=h^0(x,y), \quad {\bf U}(0,x,y,z)={\bf U}^0(x,y,z),$$
with ${\bf U}^0$ satisfying the divergence free
condition~\eqref{eq:incomp}.

\subsection{Energy balance}

The smooth solutions of the system~\eqref{eq:incomp}-\eqref{eq:uboundihm}
satisfy the energy balance
\begin{equation}
\frac{\partial}{\partial t}\int_{z_b}^{\eta} E\ dz  +
\nabla_{x,y} . \int_{z_b}^{\eta} \Bigl( {\bf u}\bigl( E + g(\eta-z)
  \bigr) -\nu \nabla_{x,y} \frac{|{\bf u}|^2}{2}\Bigr) dz
=  -\nu\int_{z_b}^\eta |\nabla_{x,y} {\bf u}|^2 dz -\kappa
\left. {\bf u}\right|_{b}^2.
\label{eq:energy_eq_mod_NS}
\end{equation}
\begin{equation}
E = E(z,{\bf u}) = \frac{|{\bf u}^2|}{2} + gz.
\label{eq:energy_exp}
\end{equation}

\subsection{The hydrostatic Euler system}

In the case of an inviscid fluid, the
system~\eqref{eq:incomp}-\eqref{eq:uboundihm} consists in
the incompressible and hydrostatic Euler equations with free surface and reads
\begin{eqnarray}
& & \nabla . {\bf U}  =0,\label{eq:div}\\
& & \frac{\p {\bf u}}{  \p t} + \nabla_{x,y}. ( {\bf u}\otimes {\bf u})+
\frac{\partial {\bf u}w}{\partial z} + g\nabla_{x,y} \eta = 0,\label{eq:u}
\end{eqnarray}
coupled with the two kinematic boundary
conditions~\eqref{eq:bottom},\eqref{eq:free_surf} and $p(t,x,y,\eta(t,x,y))= 0$.

We recall the fundamental stability property related to the fact that
the hydrostatic Euler system admits, for smooth solutions, an energy conservation that can be
written under the form
\begin{equation}
\frac{\partial}{\partial t}\int_{z_b}^{\eta} E\ dz  +
\nabla_{x,y} . \int_{z_b}^{\eta} \Bigl( {\bf u}\bigl( E + g(\eta-z)
  \bigr) \Bigr) dz\\
=  0,
\label{eq:energy_eq_mod}
\end{equation}
with $E$ defined by~\eqref{eq:energy_exp}.

\begin{figure}[hbtp]
\begin{center}
\begin{tabular}{cc}
\includegraphics[height=4.5cm]{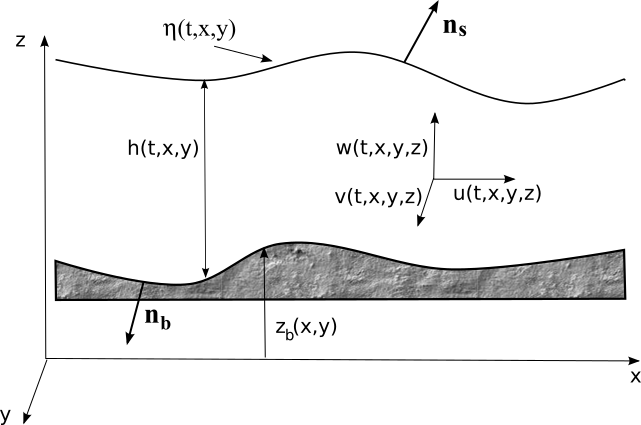}
&
\includegraphics[height=4cm]{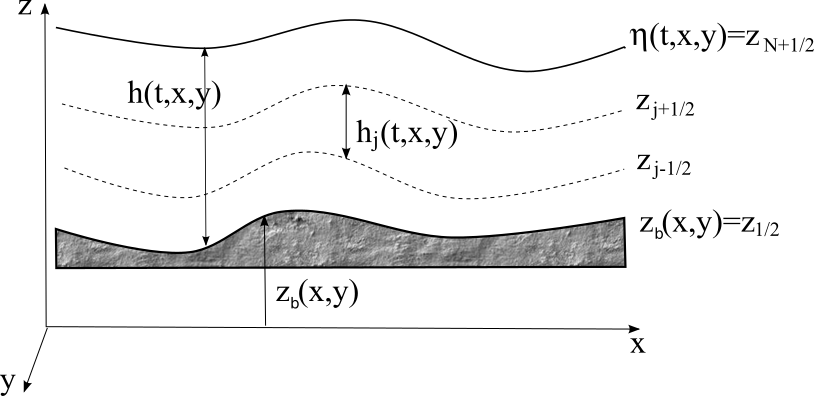}\\
{\it (a)} & {\it (b)}
\end{tabular}
\caption{Fluid domain, notations and layerwise discretization.}
\label{fig:free}
\end{center}
\end{figure}


\section{The layer-averaged model}
\label{sec:ML}

We consider a discretization of the fluid domain by layers (see Fig.~\ref{fig:free}-{\it (b)}) where the
layer $\alpha$ contains the points of coordinates $(x,y,z)$ with $z \in
L_\alpha(t,x,y)= [z_{\alpha-1/2},z_{\alpha+1/2}]$ and
$\{z_{\alpha+1/2}\}_{\alpha=1,\ldots,N}$
 is defined by
\begin{equation}
\left\{\begin{array}{l}
z_{\alpha+1/2}(t,x,y) = z_b(x,y) + \sum_{j=1}^\alpha h_j(t,x,y),\\
h_\alpha(t,x,y) = z_{\alpha+1/2}(t,x,y) - z_{\alpha-1/2}(t,x,y)=l_\alpha h(t,x,y),
\end{array}\right.
\label{eq:layer}
\end{equation}
for $\alpha\in \{0,\ldots,N\}$ and $\sum_{\alpha=1}^N l_\alpha =1$.

\subsection{The layer-averaged Euler system}
\label{subsec:layer_av_euler}

The layer-averaging process for the 2d hydrostatic Euler and
Navier-Stokes systems is precisely described in the paper~\cite{BDGSM}
with a general rheology,
the reader can refer to it. In the following, we present a Galerkin
type approximation of the 3d Euler system also leading to a layer-averaged
version of the Euler system, the obtained model reduces
to~\cite{BDGSM} in the 2d context.

Using the notations~\eqref{eq:layer}, let us consider the space
$\mathbb{P}_{0,h}^{N,t}$ of piecewise constant functions defined by
\begin{equation}
\mathbb{P}_{0,h}^{N,t} = \left\{ \1_{z \in L_\alpha(t,x,y)}(z),\quad
  \alpha\in\{1,\ldots,N\}\right\},
\label{eq:P0_space}
\end{equation}
where $\1_{z \in L_\alpha(t,x,y)}(z)$ is the characteristic
function of the layer $L_\alpha(t,x,y)$. Using this formalism, the
projection of $u$, $v$ and $w$ on $\mathbb{P}_{0,h}^{N,t}$ is a
piecewise constant function defined by
\begin{equation}
X^{N}(t,x,y,z,\{z_\alpha\})  = \sum_{\alpha=1}^N
\1_{[z_{\alpha-1/2},z_{\alpha+1/2}]}(z)X_\alpha(t,x,y),
\label{eq:ulayer}
\end{equation}
for $X \in (u,v,w)$.

The three following propositions hold.

\begin{proposition}
Using the space $\mathbb{P}_{0,h}^{N,t}$ defined by~\eqref{eq:P0_space} and the
decomposition~\eqref{eq:ulayer}, the Galerkin approximation of the incompressible and hydrostatic Euler equations~\eqref{eq:div}-\eqref{eq:u},\eqref{eq:bottom},\eqref{eq:free_surf}
leads to the system
\begin{eqnarray}
&&\sum_{\alpha=1}^N \frac{\partial  h_\alpha  }{\partial t } + \sum_{\alpha=1}^N  \nabla_{x,y} .   (h_\alpha  {\bf u} _\alpha)  =0,\label{eq:massesvml} \\
&&\frac{\p  h_{\alpha}{\bf u}_{\alpha}}{\p t} +  \nabla_{x,y} .\left(
  h_{\alpha} {\bf u}_{\alpha} \otimes {\bf u}_{\alpha} \right) +
\nabla_{x,y} \bigl(\frac{g}{2} h h_{\alpha}\bigr) = -gh_\alpha \nabla_{x,y} z_b \nonumber \\
& & \hspace*{2cm} + {\bf u}_{\alpha+1/2}
G_{\alpha+1/2} - {\bf u}_{\alpha-1/2} G_{\alpha-1/2} ,\quad {\alpha}=1,...,N. \label{eq:mvtsvml}
\end{eqnarray}
The quantity $G_{\alpha+1/2}$ (resp. $G_{\alpha-1/2}$) corresponds to mass
exchange accross the interface $z_{\alpha+1/2}$ (resp. $z_{\alpha-1/2}$)
and $G_{\alpha+1/2}$ is defined by
\begin{eqnarray}
G_{\alpha+1/2}
& = & \sum_{j=1}^\alpha \left(\frac{\partial h_j}{\partial t }
+  \nabla_{x,y} . (h_j {\bf u}_j) \right)
 =  -\sum_{j=1}^N \Bigl(\sum_{p=1}^\alpha l_p - \1_{j\leq
      \alpha}\Bigr)\nabla_{x,y} . (h_j {\bf u}_j),
\label{eq:Qalphabis}
\end{eqnarray}
for $\alpha=1,\ldots,N$. The velocities at the interfaces ${\bf u}_{\alpha+1/2}$ are defined by
\begin{equation}
{\bf u}_{\alpha+1/2} =
\left\{\begin{array}{ll}
{\bf u}_\alpha & \mbox{\rm if } \;G_{\alpha+1/2} \leq 0\\
{\bf u}_{\alpha+1} & \mbox{\rm if } \;G_{\alpha+1/2} > 0
\end{array}\right.
\label{eq:upwind_uT}
\end{equation}
\label{thm:model_ml}
\end{proposition}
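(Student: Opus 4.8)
The plan is to obtain both equations by testing the Galerkin projection of \eqref{eq:div}--\eqref{eq:u} against the basis functions of $\mathbb{P}_{0,h}^{N,t}$, which for piecewise constant functions amounts to integrating each equation over a single layer $L_\alpha(t,x,y)=[z_{\alpha-1/2},z_{\alpha+1/2}]$. The essential analytic tool is the Leibniz rule for differentiation under an integral with moving bounds: each time one commutes $\p_t$ or a horizontal derivative with $\int_{z_{\alpha-1/2}}^{z_{\alpha+1/2}}\cdot\,dz$ one picks up boundary contributions evaluated at the two interfaces, weighted by $\p_t z_{\alpha\pm1/2}$ or $\nabla_{x,y} z_{\alpha\pm1/2}$. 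Since the projected velocity is constant on each layer, $\int_{L_\alpha}{\bf u}\,dz = h_\alpha {\bf u}_\alpha$ and $\int_{L_\alpha}{\bf u}\otimes{\bf u}\,dz = h_\alpha {\bf u}_\alpha\otimes{\bf u}_\alpha$ hold exactly, so the only quantities that must be interpreted are the interface traces of the (discontinuous) projected field.

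First I would treat the incompressibility constraint. Integrating \eqref{eq:div} over $L_\alpha$ and applying Leibniz to $\int_{L_\alpha}\nabla_{x,y}.{\bf u}\,dz$ and to $\int_{L_\alpha}\p_z w\,dz$ yields, after using $\p_t h_\alpha = \p_t z_{\alpha+1/2}-\p_t z_{\alpha-1/2}$, the layer mass balance $\p_t h_\alpha + \nabla_{x,y}.(h_\alpha{\bf u}_\alpha)=G_{\alpha+1/2}-G_{\alpha-1/2}$, provided one sets $G_{\alpha+1/2}:=\p_t z_{\alpha+1/2}+{\bf u}|_{z_{\alpha+1/2}}.\nabla_{x,y}z_{\alpha+1/2}-w|_{z_{\alpha+1/2}}$, i.e.\ the kinematic flux through the interface. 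The bottom impermeability \eqref{eq:bottom} and the free surface kinematic condition \eqref{eq:free_surf} say precisely that this flux vanishes at $z_{1/2}=z_b$ and $z_{N+1/2}=\eta$, so $G_{1/2}=G_{N+1/2}=0$. Summing the layer balances over $\alpha$ then telescopes the $G$'s and gives the global mass conservation \eqref{eq:massesvml}; summing only from $1$ to $\alpha$ gives the first equality in \eqref{eq:Qalphabis}, and substituting $h_j=l_j h$ together with \eqref{eq:massesvml} (to eliminate $\p_t h$) produces the closed expression in the second equality.

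Next I would integrate the horizontal momentum equation \eqref{eq:u} over $L_\alpha$ term by term. Leibniz applied to $\int_{L_\alpha}\p_t{\bf u}\,dz$ and to $\int_{L_\alpha}\nabla_{x,y}.({\bf u}\otimes{\bf u})\,dz$, together with the exact evaluation $\int_{L_\alpha}\p_z({\bf u}w)\,dz = ({\bf u}w)|_{z_{\alpha+1/2}}-({\bf u}w)|_{z_{\alpha-1/2}}$, produces the conservative fluxes $\p_t(h_\alpha{\bf u}_\alpha)$ and $\nabla_{x,y}.(h_\alpha{\bf u}_\alpha\otimes{\bf u}_\alpha)$ plus a collection of interface traces. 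The key algebraic step is that at each interface the three boundary contributions factor out a common ${\bf u}|_{z_{\alpha+1/2}}$ and recombine exactly into $-{\bf u}|_{z_{\alpha+1/2}}\,G_{\alpha+1/2}$ (and $+{\bf u}|_{z_{\alpha-1/2}}\,G_{\alpha-1/2}$ at the lower face), with the very same $G$ as above. For the pressure term I would use the hydrostatic reduction noted after \eqref{eq:mvthm1}, namely $\int_{L_\alpha}g\nabla_{x,y}\eta\,dz = gh_\alpha\nabla_{x,y}(h+z_b)$, and the identity $\nabla_{x,y}(\tfrac{g}{2}hh_\alpha)=gh_\alpha\nabla_{x,y}h$ (which follows from $h_\alpha=l_\alpha h$) to split it into the conservative pressure flux and the topography source $-gh_\alpha\nabla_{x,y}z_b$. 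Collecting everything yields \eqref{eq:mvtsvml}.

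The step I expect to be delicate is not the integration but the interpretation of the interface traces ${\bf u}|_{z_{\alpha\pm1/2}}$ of the projected velocity, which is discontinuous across $z_{\alpha+1/2}$. This is where the model genuinely closes rather than merely averages: a single value ${\bf u}_{\alpha+1/2}$ must be assigned to each interface, and the correct choice is the upwind rule \eqref{eq:upwind_uT} selecting the donor layer according to the sign of $G_{\alpha+1/2}$. I would justify it by demanding that the closure be consistent with \eqref{eq:u} and render the interface exchange terms dissipative for the layer-averaged energy; checking this sign condition, rather than the routine Leibniz bookkeeping, is the real content of the statement.
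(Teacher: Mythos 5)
Your proposal is correct and follows essentially the same route as the paper: test the projected equations against the characteristic functions of the layers (equivalently, integrate over each $L_\alpha$ with the Leibniz rule), identify $G_{\alpha+1/2}$ as the kinematic flux through the interface, use \eqref{eq:bottom} and \eqref{eq:free_surf} to set $G_{1/2}=G_{N+1/2}=0$, and telescope to obtain \eqref{eq:massesvml} and \eqref{eq:Qalphabis}, with the momentum equation obtained the same way. The paper leaves the momentum computation and the pressure splitting as ``simple computations,'' and your filled-in details (recombination of the interface traces into $\mp{\bf u}_{\alpha\pm 1/2}G_{\alpha\pm 1/2}$, the identity $\nabla_{x,y}(\tfrac{g}{2}hh_\alpha)=gh_\alpha\nabla_{x,y}h$ from $h_\alpha=l_\alpha h$) match it; the energy-dissipation justification of the upwind choice \eqref{eq:upwind_uT} is deferred by the paper to Proposition~\ref{prop:energy_bal} and the remark following it, exactly as you anticipate.
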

The smooth
solutions of~\eqref{eq:massesvml},\eqref{eq:mvtsvml} satisfy an energy
balance and we have the following proposition.
\begin{proposition}
The system~\eqref{eq:massesvml},\eqref{eq:mvtsvml} admits, for smooth solutions, the
energy balance
\begin{eqnarray}
& & \hspace*{-0.5cm}\frac{\partial }{\partial t} E_{\alpha} +
    \nabla_{x,y} .
\left( {\bf u}_\alpha  \left(E_{\alpha} + \frac{g}{2} h_\alpha h\right)\right)
 \nonumber\\
& & \qquad=
\left( {\bf u}_{\alpha+1/2} . {\bf u}_{\alpha}
  -\frac{|{\bf u}_{\alpha}|^2-|{\bf u}_{\alpha+1}|^2}{2} \right) G_{\alpha+1/2}
-\left( {\bf u}_{\alpha-1/2} {\bf u}_\alpha - \frac{|{\bf u}_{\alpha}|^2
    - |{\bf u}_{\alpha-1}|^2}{2} \right) G_{\alpha-1/2} ,
\label{eq:energy_mcl}
\end{eqnarray}
with
\begin{equation}
E_{\alpha}=\frac{h_\alpha |{\bf u}_\alpha|^2}{2}+\frac{g}{2}h_\alpha h + gh_\alpha z_b.
\label{eq:energ_al}
\end{equation}
The sum of Eqs.~\eqref{eq:energy_mcl} for $\alpha=1,\ldots,N$ gives
the energy balance
\begin{equation}
\frac{\partial }{\partial t} \sum_{\alpha=1}^N E_{\alpha} +
\sum_{\alpha=1}^N \nabla_{x,y} . 
{\bf u}_\alpha\left(E_{\alpha} +
  \frac{g}{2} h_\alpha h \right) 
= -\sum_{\alpha=1}^{N-1} \frac{|{\bf u}_{\alpha+1/2}
- {\bf u}_\alpha|^2}{2} |G_{\alpha+1/2}|.
\label{eq:energy_glol}
\end{equation}
\label{prop:energy_bal}
\end{proposition}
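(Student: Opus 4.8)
The plan is to run the classical energy method layer by layer: take the scalar product of the momentum equation~\eqref{eq:mvtsvml} with ${\bf u}_\alpha$, recast every term in conservative form using the per-layer mass balance, and read off the evolution of $E_\alpha$. The first ingredient is exactly that mass balance: differencing two consecutive instances of the definition~\eqref{eq:Qalphabis} gives $\partial_t h_\alpha + \nabla_{x,y}\cdot(h_\alpha {\bf u}_\alpha) = G_{\alpha+1/2} - G_{\alpha-1/2}$. I would also record the two boundary identities $G_{1/2}=0$ (empty sum) and $G_{N+1/2}=0$ (which is precisely~\eqref{eq:massesvml}); these are what will make the later summation telescope.

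Dotting~\eqref{eq:mvtsvml} with ${\bf u}_\alpha$, I would apply the two product-rule identities ${\bf u}_\alpha\cdot\partial_t(h_\alpha {\bf u}_\alpha)=\partial_t(\tfrac12 h_\alpha|{\bf u}_\alpha|^2)+\tfrac12|{\bf u}_\alpha|^2\partial_t h_\alpha$ and ${\bf u}_\alpha\cdot\nabla_{x,y}\cdot(h_\alpha{\bf u}_\alpha\otimes{\bf u}_\alpha)=\nabla_{x,y}\cdot(\tfrac12 h_\alpha|{\bf u}_\alpha|^2{\bf u}_\alpha)+\tfrac12|{\bf u}_\alpha|^2\nabla_{x,y}\cdot(h_\alpha{\bf u}_\alpha)$, then collapse the two residual $\tfrac12|{\bf u}_\alpha|^2(\cdots)$ contributions into $\tfrac12|{\bf u}_\alpha|^2(G_{\alpha+1/2}-G_{\alpha-1/2})$ via the mass balance. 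The potential part is where I would use crucially that $h_\alpha=l_\alpha h$ with $l_\alpha$ constant: then ${\bf u}_\alpha\cdot\nabla_{x,y}(\tfrac g2 hh_\alpha)=gh_\alpha{\bf u}_\alpha\cdot\nabla_{x,y} h$, so the pressure and topography work combine into $gh_\alpha{\bf u}_\alpha\cdot\nabla_{x,y}(h+z_b)$. Using $\partial_t(\tfrac g2 h_\alpha h+gh_\alpha z_b)=g(h+z_b)\partial_t h_\alpha$ and the mass balance again, this rewrites as $\partial_t(\tfrac g2 h_\alpha h+gh_\alpha z_b)+\nabla_{x,y}\cdot\big(gh_\alpha(h+z_b){\bf u}_\alpha\big)$ up to an interface contribution $g(h+z_b)(G_{\alpha+1/2}-G_{\alpha-1/2})$. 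Assembling the kinetic and potential pieces reproduces exactly $\partial_t E_\alpha+\nabla_{x,y}\cdot\big({\bf u}_\alpha(E_\alpha+\tfrac g2 h_\alpha h)\big)$ on the left, leaving on the right only interface terms, which after inserting the upwind prescription~\eqref{eq:upwind_uT} take the announced form~\eqref{eq:energy_mcl}.

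For the global balance~\eqref{eq:energy_glol} I would sum over $\alpha=1,\dots,N$. The flux divergences simply accumulate. The potential interface contributions telescope to zero, because $g(h+z_b)$ does not depend on $\alpha$ and $\sum_\alpha(G_{\alpha+1/2}-G_{\alpha-1/2})=G_{N+1/2}-G_{1/2}=0$; this is the step where the two boundary identities above pay off. A discrete integration by parts on the remaining kinetic interface sum then collects, at each interior interface $\alpha+1/2$, a single factor multiplying $G_{\alpha+1/2}$, equal to $\big({\bf u}_{\alpha+1/2}-\tfrac12({\bf u}_\alpha+{\bf u}_{\alpha+1})\big)\cdot({\bf u}_\alpha-{\bf u}_{\alpha+1})$.

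The hard part will be the final sign analysis. Splitting on the sign of $G_{\alpha+1/2}$ and inserting~\eqref{eq:upwind_uT}, this factor equals $+\tfrac12|{\bf u}_\alpha-{\bf u}_{\alpha+1}|^2$ when $G_{\alpha+1/2}\le0$ (so that ${\bf u}_{\alpha+1/2}={\bf u}_\alpha$) and $-\tfrac12|{\bf u}_\alpha-{\bf u}_{\alpha+1}|^2$ when $G_{\alpha+1/2}>0$ (so that ${\bf u}_{\alpha+1/2}={\bf u}_{\alpha+1}$); hence in both cases its product with $G_{\alpha+1/2}$ is the nonpositive quantity $-\tfrac12|{\bf u}_\alpha-{\bf u}_{\alpha+1}|^2\,|G_{\alpha+1/2}|$, i.e. the dissipative right-hand side of~\eqref{eq:energy_glol}, the interface jump being read off through~\eqref{eq:upwind_uT}. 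Everything else is careful but routine index bookkeeping; the only genuine subtlety is keeping the interface velocities ${\bf u}_{\alpha\pm1/2}$ and the potential interface terms aligned so that they recombine precisely into the conservative left-hand side and so that the sign of the upwind choice delivers the nonpositivity.
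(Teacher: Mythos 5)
Your proposal is correct and follows essentially the same route as the paper's proof: dot the momentum equation~\eqref{eq:mvtsvml} with ${\bf u}_\alpha$, combine with the per-layer mass balance~\eqref{eq:c0_mc} multiplied by $g(h+z_b)-|{\bf u}_\alpha|^2/2$ (your product-rule identities and the use of $h_\alpha=l_\alpha h$ are exactly this), and telescope over the layers using $G_{1/2}=G_{N+1/2}=0$. You in fact go further than the paper's terse write-up by making the telescoping and the upwind sign analysis explicit; the only caveat is that the dissipation you land on, $-\tfrac12|{\bf u}_{\alpha+1}-{\bf u}_\alpha|^2\,|G_{\alpha+1/2}|$, coincides with the printed $-\tfrac12|{\bf u}_{\alpha+1/2}-{\bf u}_\alpha|^2\,|G_{\alpha+1/2}|$ only when $G_{\alpha+1/2}>0$ (and likewise your per-layer identity carries $g(h+z_b)$ interface terms absent from~\eqref{eq:energy_mcl}), a mismatch that traces to the paper's stated formulas rather than to your computation, since your version is the one consistent with the remark that the centered choice~\eqref{eq:centered_uT} makes the right-hand side vanish.
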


\begin{remark}
Equation~\eqref{eq:energy_glol} is a layer discretization of the energy
balance~\eqref{eq:energy_exp}. The definition of ${\bf u}_{\alpha+1/2}$ given
in~\eqref{eq:upwind_uT} ensures the right hand side
in~Eq.~\eqref{eq:energy_glol} is nonpositive. Notice that a
centered definition for ${\bf u}_{\alpha+1/2}$ i.e.
\begin{equation}
{\bf u}_{\alpha+1/2} = \frac{{\bf u}_{\alpha}+{\bf
    u}_{\alpha+1}}{2},
\label{eq:centered_uT}
\end{equation}
instead of \eqref{eq:upwind_uT} leads to a vanishing right hand side
in~Eq.~\eqref{eq:energy_glol}. But the centered
choice~\eqref{eq:centered_uT} does not allow to obtain an energy balance in the variable density
case and does not give a maximum principle, at the discrete level,
see~\cite{JSM_JCP}. Simple calculations show that any other choice
than~\eqref{eq:upwind_uT} or~\eqref{eq:centered_uT} leads to a non
negative r.h.s. in~\eqref{eq:energy_glol}.
\end{remark}

It is noticeable that, thanks to the kinematic boundary condition
at each interface, the vertical velocity is no more a variable of
the system (\ref{eq:mvtsvml}). This is an advantage of this formulation
over the hydrostatic model where the vertical velocity is needed in
the momentum equation (\ref{eq:mvthm}) and is deduced
from the incompressibility condition (\ref{eq:incomp}). Even if the vertical velocity $w$ no more appears in the
model~\eqref{eq:massesvml}-\eqref{eq:mvtsvml}, it can be obtained as
follows.

\begin{proposition}
The piecewise constant approximation of the vertical velocity $w$ satifying~Eq.~\eqref{eq:ulayer} is given by
\begin{equation}
w_{\alpha} = k_\alpha - z_\alpha\nabla_{x,y} . {\bf
  u}_\alpha
\label{eq:def_w}
\end{equation}
with
\begin{eqnarray*}
k_1 & = & \nabla_{x,y} . (z_b {\bf u}_1),\qquad
k_{\alpha+1} =  k_\alpha + \nabla_{x,y} . \bigl( z_{\alpha+1/2}({\bf
  u}_{\alpha+1} - {\bf u}_\alpha) \bigr).
\end{eqnarray*}
The quantities $\{w_\alpha\}_{\alpha=1}^N$ are obtained only using a
post-processing of the variables governing the system~\eqref{eq:massesvml}-\eqref{eq:mvtsvml}.
\label{prop:def_w}
\end{proposition}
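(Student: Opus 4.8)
The plan is to reconstruct $w$ layer by layer from the bottom, exploiting the fact that inside each layer the horizontal velocity is frozen to the constant ${\bf u}_\alpha$. Indeed, the incompressibility constraint \eqref{eq:incomp} reads $\nabla_{x,y}.{\bf u}+\partial_z w=0$, so in the interior of layer $\alpha$ one has $\partial_z w=-\nabla_{x,y}.{\bf u}_\alpha$, a quantity independent of $z$; hence $w$ is affine in $z$ on each layer. Consequently the coefficient $w_\alpha$ appearing in the piecewise-constant expansion \eqref{eq:ulayer}, read as the layer average $\frac{1}{h_\alpha}\int_{z_{\alpha-1/2}}^{z_{\alpha+1/2}} w\,dz$, coincides with the pointwise value $w(z_\alpha)$ at the midpoint $z_\alpha=(z_{\alpha-1/2}+z_{\alpha+1/2})/2$. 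It therefore suffices to track these midpoint values by an upward sweep: fix the base value from the bottom condition and propagate it through the interfaces.

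For the base case, the impermeability condition \eqref{eq:bottom} at $z=z_b$ gives $w(z_b)={\bf u}_1.\nabla_{x,y}z_b$; combining this with the affine profile on layer $1$, namely $w(z_1)=w(z_b)-(z_1-z_b)\nabla_{x,y}.{\bf u}_1$, and expanding $\nabla_{x,y}.(z_b{\bf u}_1)=\nabla_{x,y}z_b.{\bf u}_1+z_b\nabla_{x,y}.{\bf u}_1$, yields exactly $w_1=k_1-z_1\nabla_{x,y}.{\bf u}_1$ with $k_1=\nabla_{x,y}.(z_b{\bf u}_1)$. The transition across an interface is governed by a jump relation: since ${\bf u}$ jumps from ${\bf u}_\alpha$ to ${\bf u}_{\alpha+1}$ across the sloped surface $z_{\alpha+1/2}(t,x,y)$, the reconstructed $w$ is in fact discontinuous there. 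The admissible jump is fixed by requiring the mass flux normal to the moving interface to coincide on both sides, so that the exchange $G_{\alpha+1/2}$ of \eqref{eq:Qalphabis} is unambiguous; writing that flux as $w-{\bf u}.\nabla_{x,y}z_{\alpha+1/2}-\partial_t z_{\alpha+1/2}$ and equating the two sides gives $w(z_{\alpha+1/2}^+)-w(z_{\alpha+1/2}^-)=({\bf u}_{\alpha+1}-{\bf u}_\alpha).\nabla_{x,y}z_{\alpha+1/2}$.

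With these two ingredients the recursion follows by an elementary induction. Setting $k_\alpha:=w_\alpha+z_\alpha\nabla_{x,y}.{\bf u}_\alpha$, I would chain the affine profile from $z_\alpha$ up to $z_{\alpha+1/2}^-$, then the interface jump, then the affine profile from $z_{\alpha+1/2}^+$ to $z_{\alpha+1}$, so as to express $w_{\alpha+1}-w_\alpha$ in terms of $\nabla_{x,y}.{\bf u}_\alpha$, $\nabla_{x,y}.{\bf u}_{\alpha+1}$ and $\nabla_{x,y}z_{\alpha+1/2}.({\bf u}_{\alpha+1}-{\bf u}_\alpha)$. Forming $k_{\alpha+1}-k_\alpha$, the coefficients of $\nabla_{x,y}.{\bf u}_\alpha$ and $\nabla_{x,y}.{\bf u}_{\alpha+1}$ collapse to $\mp z_{\alpha+1/2}$, and recombining with the jump term through the product rule $\nabla_{x,y}.(z_{\alpha+1/2}({\bf u}_{\alpha+1}-{\bf u}_\alpha))=z_{\alpha+1/2}\nabla_{x,y}.({\bf u}_{\alpha+1}-{\bf u}_\alpha)+\nabla_{x,y}z_{\alpha+1/2}.({\bf u}_{\alpha+1}-{\bf u}_\alpha)$ reproduces precisely $k_{\alpha+1}=k_\alpha+\nabla_{x,y}.(z_{\alpha+1/2}({\bf u}_{\alpha+1}-{\bf u}_\alpha))$, as claimed.

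The main obstacle, and the only genuinely non-routine point, is the interface analysis. One must resist the temptation to impose continuity of $w$: the correct reconstruction is discontinuous, and the size of the jump is dictated by conservation of the normal mass flux across the moving, $(x,y)$-dependent interface. Everything else is careful but mechanical bookkeeping with the Leibniz/product rule, the identification $w_\alpha=w(z_\alpha)$ resting entirely on the affineness established at the outset.
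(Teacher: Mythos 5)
Your proof is correct; I verified in particular that your jump relation $w(z_{\alpha+1/2}^+)-w(z_{\alpha+1/2}^-)=({\bf u}_{\alpha+1}-{\bf u}_\alpha)\,.\,\nabla_{x,y}z_{\alpha+1/2}$ is exactly the Rankine--Hugoniot condition for $\nabla . {\bf U}^N=0$ across the sloped interface, and that chaining it with the two affine profiles does collapse to $k_{\alpha+1}-k_\alpha=\nabla_{x,y}.\bigl(z_{\alpha+1/2}({\bf u}_{\alpha+1}-{\bf u}_\alpha)\bigr)$. Your route differs from the paper's in architecture rather than substance. The paper integrates the divergence-free condition~\eqref{eq:incomp} once globally, using the bottom condition~\eqref{eq:bottom} to write $w=-\nabla_{x,y}.\int_{z_b}^z{\bf u}\,dz$, substitutes ${\bf u}^N$ for ${\bf u}$, and computes the layer average $h_\alpha w_\alpha=\int_{L_\alpha}w\,dz$ by direct calculation; it carries out only $\alpha=1$ explicitly and leaves the remaining layers to ``a similar computation''. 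You instead argue locally: affineness of $w$ in $z$ inside each layer identifies the layer average with the midpoint value $w(z_\alpha)$, the interface discontinuity is quantified by an explicit jump condition motivated by single-valuedness of the mass flux defining $G_{\alpha+1/2}$ in~\eqref{eq:Qalphabis}, and an induction on $\alpha$ delivers the recursion for $k_\alpha$ in precisely the stated form. What your version buys is transparency: it exposes that the reconstructed $w$ is genuinely discontinuous at the interfaces (a point the paper's single formula silently absorbs), it ties the size of the jump to the physical consistency of the exchange terms, and it actually proves the general inductive step rather than deferring it. What the paper's version buys is brevity: one global integration and one layer average, with no need to discuss one-sided limits. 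The two are strictly equivalent, since differentiating the paper's formula in $z$ recovers your affine slopes and taking its one-sided limits at $z_{\alpha+1/2}$ recovers your jump.
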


\begin{proof}[Proof of prop.~\ref{thm:model_ml}]
Considering the divergence free condition~\eqref{eq:div}, using the decomposition~\eqref{eq:ulayer} and the space of test
functions~\eqref{eq:P0_space}, we consider the quantity
$$\int_\R {\bf 1}_{z \in L_\alpha (t,x,y)} \nabla . {\bf U}^N dz =
0,$$
with ${\bf U}^N = (u^N,v^N,w^N)^T$. Simple computations give
$$
0 = \int_\R {\bf 1}_{z \in L_\alpha (t,x,y)} \nabla . {\bf U^{N}} dz  =
\frac{\partial h_\alpha}{\partial t } + \frac{\partial }{\partial x }
 \int_{z_{\alpha-1/2}}^{z_{\alpha+1/2}} u\ dz + \frac{\partial }{\partial y }
 \int_{z_{\alpha-1/2}}^{z_{\alpha+1/2}} v\ dz - G_{\alpha+1/2} +
 G_{\alpha-1/2},
$$
leading to
\begin{equation}
\frac{\partial  h_\alpha  }{\partial t } +
\nabla_{x,y} .   (h_\alpha  {\bf u} _\alpha) = G_{\alpha+1/2} -
G_{\alpha-1/2},
\label{eq:c0_mc}
\end{equation}
with $G_{\alpha\pm 1/2}$ defined by
$$
G_{\alpha+1/2}  =\frac{\partial z_{\alpha+1/2}}{\partial t} +
{\bf u}_{\alpha+1/2} . \nabla_{x,y} z_{\alpha+1/2} - w_{\alpha+1/2}.
$$
The sum for $\alpha=1,\ldots,N$ of the above relations gives Eq.~\eqref{eq:massesvml}
where the kinematic boundary
conditions~\eqref{eq:bottom},\eqref{eq:free_surf} corresponding to
\begin{equation}
G_{1/2} = G_{N+1/2} = 0,
\label{eq:G_boundary}
\end{equation}
have been used. Similarly, the sum for $j=1,\ldots,\alpha$ of the
relations~\eqref{eq:c0_mc} with~\eqref{eq:G_boundary} gives the expression~\eqref{eq:Qalphabis}
for $G_{\alpha+1/2}$.

Now we consider the Galerkin approximation of Eq.~\eqref{eq:u}
i.e. the quantity
$$\int_\R {\bf 1}_{z \in L_\alpha (t,x,y)} \left(\frac{\p {\bf u}^N}{
    \p t} + \nabla_{x,y}. ( {\bf u}^N \otimes {\bf u}^N)+
\frac{\partial {\bf u}^N w^N}{\partial z} + g\nabla_{x,y} \eta \right) dz =0,$$
leading, after simple computations, to Eq.~\eqref{eq:mvtsvml}.
\end{proof}

\begin{proof}[Proof of prop.~\ref{prop:energy_bal}]
In order to obtain~\eqref{eq:energy_mcl} we multiply~\eqref{eq:c0_mc} by $g(h+z_b)-|{\bf u}_\alpha|^2/2$ and~\eqref{eq:mvtsvml} by
${\bf u_\alpha}$ then we perform simple manipulations. More precisely,
the momentun equation along the $x$ axis multiplied by $u_\alpha$ gives
\begin{multline*}
\left( \frac{\partial }{\partial t } (h_\alpha u_\alpha) +
\frac{\partial }{\partial x }\left( h_\alpha u_\alpha^2 +
  \frac{g}{2}h h_\alpha\right) + \frac{\partial }{\partial y }\left(
  h_\alpha u_\alpha v_\alpha\right) \right) u_\alpha=\\ \biggl( - g h_\alpha \frac{\partial z_b}{\partial x}
+ u_{\alpha+1/2}G_{\alpha+1/2} -
u_{\alpha-1/2}G_{\alpha-1/2}
\biggr) u_\alpha.
\end{multline*}
Considering first the left hand side of the preceding equation
excluding the pressure terms, we denote
$$I_{u,\alpha}  =
\left( \frac{\partial }{\partial t } (h_\alpha u_\alpha) +
\frac{\partial }{\partial x }\left( h_\alpha u_\alpha^2\right) +
\frac{\partial }{\partial y }\left( h_\alpha u_\alpha v_\alpha\right)\right)
u_\alpha,$$
and using~\eqref{eq:c0_mc} we have
$$I_{u,\alpha}  = \frac{\partial }{\partial t } \left( \frac{h_\alpha u_\alpha^2}{2} \right) + \frac{\partial }{\partial x}
\left( u_\alpha \frac{h_\alpha u_\alpha^2}{2} \right) + \frac{\partial }{\partial y}
\left( v_\alpha \frac{h_\alpha u_\alpha^2}{2} \right)
+ \frac{u_\alpha^2}{2} \left( \frac{\partial  h_\alpha  }{\partial t } +
\nabla_{x,y} .   (h_\alpha  {\bf u} _\alpha)\right).$$
Now we consider the contribution of the pressure terms over the energy
balance i.e.
$$I_{p,u,\alpha} = \left( \frac{\partial}{\partial x } \left( \frac{g}{2}h
  h_\alpha \right) + g h_\alpha \frac{\partial z_b}{\partial x}\right) u_\alpha,$$
and it comes
\begin{eqnarray*}
I_{p,u,\alpha} & = & g h_\alpha \frac{\partial}{\partial x} (h+z_b)
u_\alpha
 =  g\frac{\partial}{\partial x} \left( h_\alpha (h+z_b) u_\alpha \right)
- g (h+z_b) \frac{\partial }{\partial x} ( h_\alpha u_\alpha)
\\
& = & \frac{\partial}{\partial x} \left( \left( \frac{g}{2} h_\alpha h +
  \frac{g}{2} h_\alpha (h+2z_b) \right) u_\alpha\right)
- g (h+z_b) \frac{\partial }{\partial x} ( h_\alpha u_\alpha).
\end{eqnarray*}
Performing the same manipulations over the momentum equation along $y$
and adding the terms $I_{u,\alpha}$,$I_{v,\alpha}$,$I_{p,u,\alpha}$,$I_{p,v,\alpha}$ and ~\eqref{eq:c0_mc} multiplied by $g(h+z_b)$ gives the result.
\end{proof}

\begin{proof}[Proof of prop.~\ref{prop:def_w}]
Using the boundary condition~\eqref{eq:bottom}, an integration from $z_b$ to $z$ of the divergence free
condition~\eqref{eq:incomp} easily gives
$$w = -\nabla_{x,y} . \int_{z_b}^z {\bf u}\ dz.$$
Replacing formally in the above equation ${\bf u}$ (resp. $w$) by
${\bf u}^N$ (resp. $w^N$) defined by~\eqref{eq:ulayer} and performing an integration over the
layer $L_1$ of the obtained relation yields
$$
h_1w_1 = -\int_{z_b}^{z_{3/2}} \nabla_{x,y} . \int_{z_b}^z {\bf
  u}_1\ dz dz_1
= h_1\nabla_{x,y}. (z_b{\bf u}_1) -\frac{z_{3/2}^2 - z_b^2}{2}\nabla_{x,y}. {\bf u}_1,
$$
i.e. $w_1 = \nabla_{x,y}. (z_b{\bf u}_1) - z_1 \nabla_{x,y}. {\bf
  u}_1,$
corresponding to~\eqref{eq:def_w} for $\alpha=1$. A similar
computation for the layers $L_2,\ldots,L_N$ proves the
result~\eqref{eq:def_w} for $\alpha=2,\ldots,N$. A more detailed version of this proof is given in~\cite{BDGSM}.
\end{proof}

\subsection{The layer-averaged Navier-Stokes system}
\label{subsec:NS}

In paragraph~\ref{subsec:layer_av_euler}, we have applied the
layer-averaging to the Euler system, we now use the same process for
the hydrostatic Navier-Stokes system. First, we consider the Navier-Stokes
system~\eqref{eq:incomp}-\eqref{eq:uboundihm} for a Newtonian fluid
and then with a simplified rheology.

\subsubsection{Complete model}

The layer-averaging process applied to the Navier-Stokes
system~\eqref{eq:incomp}-\eqref{eq:uboundihm} leads to the
following proposition.

\begin{proposition}
The layer-averaged hydrostatic Navier-Stokes system~\eqref{eq:incomp}-\eqref{eq:uboundihm} is given by
\begin{eqnarray}
&&\sum_{\alpha=1}^N \frac{\partial  h_\alpha  }{\partial t} + \sum_{\alpha=1}^N  \ \nabla_{x,y} .   (h_\alpha  {\bf u} _\alpha)  =0. \label{eq:nsml1} \\
&&\frac{\p  h_{\alpha}{\bf u}_{\alpha}}{\p t} + \ \nabla_{x,y} .\left(
  h_{\alpha} {\bf u}_{\alpha} \otimes {\bf u}_{\alpha} \right) +
\nabla_{x,y}  \bigl(\frac{g}{2} h h_{\alpha} \bigr) = -gh_\alpha \nabla_{x,y} z_b \nonumber\\
& & + {\bf u}_{\alpha+1/2} G_{\alpha+1/2} - {\bf u}_{\alpha-1/2}
G_{\alpha-1/2} + \nabla_{x,y} .
\bigl( h_\alpha {\bf \Sigma}_\alpha \bigr) \nonumber\\
& & - {\bf \Sigma}_{\alpha+1/2} \nabla_{x,y} z_{\alpha+1/2} + {\bf
    \Sigma}_{\alpha-1/2} \nabla_{x,y} z_{\alpha-1/2} \nonumber\\
&& +2\nu_{\alpha+1/2} \frac{{\bf u}_{{\alpha}+1}-{\bf
    u}_{\alpha}}{h_{{\alpha}+1}+h_{\alpha}} - 2\nu_{\alpha-1/2}
\frac{{\bf u}_{\alpha}-{\bf
    u}_{{\alpha}-1}}{h_{\alpha}+h_{{\alpha}-1}} - \kappa_{\alpha}{\bf
  u}_{\alpha} + W_\alpha {\bf t}_s,\quad {\alpha}=1,...,N,\label{eq:nsml2}
\end{eqnarray}
with
\begin{eqnarray}
{\bf \Sigma}_{\alpha+1/2} & = & \begin{pmatrix}
\Sigma_{xx,\alpha+1/2} & \Sigma_{xy,\alpha+1/2}\\
\Sigma_{yx,\alpha+1/2} & \Sigma_{yy,\alpha+1/2}
\end{pmatrix},\label{eq:sigma_alpha}\\
\Sigma_{xx,\alpha+1/2} & = & \frac{\nu_{\alpha+1/2}}{h_{\alpha+1}+h_\alpha}\bigl( h_{\alpha}\frac{\partial
  u_{\alpha}}{\partial x} + h_{\alpha+1}\frac{\partial u_{\alpha+1}}{\partial x}  \bigr) - 2 \nu_{\alpha+1/2}\frac{\partial z_{\alpha+1/2}}{\partial x}
\frac{u_{\alpha+1} -
  u_\alpha}{h_{\alpha+1} + h_\alpha},\label{eq:sigma_mxx}\\
\Sigma_{xy,\alpha+1/2} & = & \frac{\nu_{\alpha+1/2}}{h_{\alpha+1}+h_\alpha}\bigl( h_{\alpha}\frac{\partial
  u_{\alpha}}{\partial y} + h_{\alpha+1}\frac{\partial u_{\alpha+1}}{\partial y}  \bigr) - 2 \nu_{\alpha+1/2}\frac{\partial z_{\alpha+1/2}}{\partial y}
\frac{u_{\alpha+1} -
  u_\alpha}{h_{\alpha+1} + h_\alpha},\label{eq:sigma_mxy}\\
{\bf \Sigma}_{\alpha} & = & \begin{pmatrix}
\Sigma_{xx,\alpha} & \Sigma_{xy,\alpha}\\
\Sigma_{yx,\alpha} & \Sigma_{yy,\alpha}
\end{pmatrix} = \frac{{\bf \Sigma}_{\alpha+1/2}+ {\bf
    \Sigma}_{\alpha-1/2}}{2},\label{eq:sigma_m}
\end{eqnarray}
and
\begin{equation}
\kappa_{\alpha}=
\left\{
\begin{array}{l}
\kappa \quad {\rm if} \quad \alpha =1\\
0 \quad {\rm if} \quad\alpha \ne 1
\end{array}
\right.
\quad
\nu_{\alpha+1/2}=
\left\{
\begin{array}{l}
0 \quad {\rm if} \quad \alpha =0,N\\
\nu \quad {\rm if} \quad\alpha= 1,...,N-1\\
\end{array}\right.
\quad
W_{\alpha}=
\left\{
\begin{array}{l}
W \quad {\rm if} \quad \alpha =N\\
0 \quad {\rm if} \quad\alpha \neq N\\
\end{array}\right.
\label{eq:kappa_mu}
\end{equation}
The vertical velocities $\{w_\alpha\}_{\alpha=1}^N$ are defined by~\eqref{eq:def_w}.

For smooth solutions, the system~\eqref{eq:nsml1}-\eqref{eq:nsml2}
admits the energy balance
\begin{eqnarray}
\frac{\partial }{\partial t} \sum_{\alpha=1}^N E_{\alpha} &+&
\nabla_{x,y} . \sum_{\alpha=1}^N {\bf u}_\alpha\left(E_{\alpha} +
 \frac{g}{2} h_\alpha h - h_\alpha{\bf \Sigma}_\alpha \right) 
 =   -\sum_{\alpha=1}^{N-1} \frac{|{\bf u}_{\alpha+1/2}
 - {\bf u}_\alpha|^2}{2} |G_{\alpha+1/2}| \nonumber\\
 & &\qquad -\sum_{\alpha=1}^{N-1} \frac{h_{\alpha+1} + h_\alpha}{2\nu}{\bf
   \Sigma}_{\alpha+1/2}^2
 - \sum_{\alpha=1}^{N-1} 2\nu \frac{|{\bf u}_{{\alpha}+1}-{\bf
     u}_{\alpha}|^2}{h_{{\alpha}+1}+h_{\alpha}} -\kappa |{\bf u}_1|^2 + W {\bf u}_N . {\bf t}_s,
\label{eq:energy_glol_ns}
\end{eqnarray}
with $E_\alpha$ defined
by~\eqref{eq:energ_al} and ${\bf \Sigma}_{\alpha+1/2}^2 = \sum_{i,j} \Sigma_{i,j,\alpha+1/2}^2$. Relation~\eqref{eq:energy_glol_ns} is
consistent with a layer-averaged discretization of the equation~\eqref{eq:energy_eq_mod_NS}.
\label{prop:NS_mc}
\end{proposition}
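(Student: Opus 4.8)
The plan is to run the layer-averaging procedure exactly as in the proof of Proposition~\ref{thm:model_ml}, isolating the new viscous contributions, and then to upgrade the energy computation of Proposition~\ref{prop:energy_bal} by carefully collecting the dissipative terms. Since the viscosity does not enter the incompressibility constraint, the mass balance~\eqref{eq:nsml1} and the definition of the exchange terms $G_{\alpha+1/2}$ are unchanged and follow verbatim from Proposition~\ref{thm:model_ml}. For the momentum equation I would apply the Galerkin projection $\int_\R \1_{z\in L_\alpha}(\,\cdot\,)\,dz$ to the full Navier--Stokes balance~\eqref{eq:mvthm}: the advective, pressure and exchange terms reproduce exactly the right-hand side of~\eqref{eq:mvtsvml}, so only the two viscous contributions need to be computed, and the $u$- and $v$-components are treated symmetrically.

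For the horizontal stress $\tfrac1{\rho_0}\nabla_{x,y}.\sigma$, I would integrate over $L_\alpha$ using the Leibniz rule; this produces the conservative term $\nabla_{x,y}.(h_\alpha {\bf \Sigma}_\alpha)$ together with the interface contributions $-{\bf \Sigma}_{\alpha+1/2}\nabla_{x,y}z_{\alpha+1/2}+{\bf \Sigma}_{\alpha-1/2}\nabla_{x,y}z_{\alpha-1/2}$, the interface values ${\bf \Sigma}_{\alpha\pm1/2}$ of~\eqref{eq:sigma_mxx}--\eqref{eq:sigma_mxy} being the thickness-weighted trace of $\mu\nabla_{x,y}{\bf u}^N$ corrected by the chain-rule term $-2\nu_{\alpha+1/2}\nabla_{x,y}z_{\alpha+1/2}({\bf u}_{\alpha+1}-{\bf u}_\alpha)/(h_{\alpha+1}+h_\alpha)$ that accounts for the slope of the moving interface. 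For the vertical term $\nu\,\p^2{\bf u}/\p z^2$, integration over $L_\alpha$ yields the jump $\nu[\p{\bf u}/\p z]_{z_{\alpha-1/2}}^{z_{\alpha+1/2}}$; approximating $\p{\bf u}/\p z$ at $z_{\alpha+1/2}$ by the centred finite difference $2({\bf u}_{\alpha+1}-{\bf u}_\alpha)/(h_{\alpha+1}+h_\alpha)$ gives the exchange terms of~\eqref{eq:nsml2}, while the bottom ($\alpha=1$) and surface ($\alpha=N$) traces are replaced, respectively, by the Navier law~\eqref{eq:uboundihmf} and the no-stress condition~\eqref{eq:uboundihm}; this is exactly the content of the convention~\eqref{eq:kappa_mu} for $\kappa_\alpha$, $\nu_{\alpha\pm1/2}$ and $W_\alpha$.

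For the energy balance I would multiply~\eqref{eq:c0_mc} by $g(h+z_b)-|{\bf u}_\alpha|^2/2$ and~\eqref{eq:nsml2} by ${\bf u}_\alpha$, exactly as in Proposition~\ref{prop:energy_bal}. The inviscid part reproduces the flux $\nabla_{x,y}.({\bf u}_\alpha(E_\alpha+\tfrac g2 h_\alpha h))$ and the exchange dissipation $-\sum|{\bf u}_{\alpha+1/2}-{\bf u}_\alpha|^2|G_{\alpha+1/2}|/2$, so I would only have to process the viscous terms. Dotting $\nabla_{x,y}.(h_\alpha{\bf \Sigma}_\alpha)$ with ${\bf u}_\alpha$ splits it into the flux $\nabla_{x,y}.(h_\alpha{\bf \Sigma}_\alpha{\bf u}_\alpha)$---which supplies the $-h_\alpha{\bf \Sigma}_\alpha$ contribution to the energy flux---and a bulk term $-h_\alpha{\bf \Sigma}_\alpha:\nabla_{x,y}{\bf u}_\alpha$. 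The vertical-exchange and friction/wind terms are immediate: a discrete summation by parts turns $\sum_\alpha(2\nu_{\alpha+1/2}\tfrac{{\bf u}_{\alpha+1}-{\bf u}_\alpha}{h_{\alpha+1}+h_\alpha}-2\nu_{\alpha-1/2}\tfrac{{\bf u}_\alpha-{\bf u}_{\alpha-1}}{h_\alpha+h_{\alpha-1}}).{\bf u}_\alpha$ into $-\sum_{\alpha=1}^{N-1}2\nu|{\bf u}_{\alpha+1}-{\bf u}_\alpha|^2/(h_{\alpha+1}+h_\alpha)$, while $-\kappa_\alpha{\bf u}_\alpha.{\bf u}_\alpha$ and $W_\alpha{\bf t}_s.{\bf u}_\alpha$ give $-\kappa|{\bf u}_1|^2$ and $W{\bf u}_N.{\bf t}_s$.

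The main obstacle is to show that the remaining stress terms---namely $-\sum_\alpha h_\alpha{\bf \Sigma}_\alpha:\nabla_{x,y}{\bf u}_\alpha$ together with the interface terms $\sum_\alpha(-{\bf \Sigma}_{\alpha+1/2}\nabla_{x,y}z_{\alpha+1/2}+{\bf \Sigma}_{\alpha-1/2}\nabla_{x,y}z_{\alpha-1/2}).{\bf u}_\alpha$---assemble exactly into the quadratic dissipation $-\sum_{\alpha=1}^{N-1}\tfrac{h_{\alpha+1}+h_\alpha}{2\nu}{\bf \Sigma}_{\alpha+1/2}^2$. The strategy is to substitute ${\bf \Sigma}_\alpha=({\bf \Sigma}_{\alpha+1/2}+{\bf \Sigma}_{\alpha-1/2})/2$ from~\eqref{eq:sigma_m}, reindex both sums so that everything is organised interface by interface, and then recognise, using the defining relations~\eqref{eq:sigma_mxx}--\eqref{eq:sigma_mxy}, that the factor multiplying each ${\bf \Sigma}_{\alpha+1/2}$ is precisely $\tfrac{h_{\alpha+1}+h_\alpha}{2\nu}{\bf \Sigma}_{\alpha+1/2}$---that is, the effective gradient of which ${\bf \Sigma}_{\alpha+1/2}$ is the stress. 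This is exactly why the interface stress is defined with the slope correction in~\eqref{eq:sigma_mxx}--\eqref{eq:sigma_mxy}: it is the unique choice making this contraction a perfect square. Collecting the flux terms then yields~\eqref{eq:energy_glol_ns}, and the consistency with~\eqref{eq:energy_eq_mod_NS} follows by comparing the layer-wise dissipation with $-\nu\int|\nabla_{x,y}{\bf u}|^2\,dz-\kappa|{\bf u}|_b^2$.
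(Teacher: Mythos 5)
Your proposal follows essentially the same route as the paper's proof in Appendix~A: layer-averaging the stress via the Leibniz rule to produce $\nabla_{x,y}.(h_\alpha{\bf \Sigma}_\alpha)$ plus the interface slope terms, closing the vertical stress traces with the centred difference and the boundary conditions~\eqref{eq:uboundihmf},\eqref{eq:uboundihm} (which is exactly how the paper justifies~\eqref{eq:kappa_mu}), and then, for the energy, dotting with ${\bf u}_\alpha$, substituting the closure~\eqref{eq:sigma_m} and regrouping interface by interface so that each ${\bf \Sigma}_{\alpha+1/2}$ is multiplied by $\tfrac{h_{\alpha+1}+h_\alpha}{2\nu}{\bf \Sigma}_{\alpha+1/2}$. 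This matches the paper's computation of $R_{x,\alpha}u_\alpha$ and the role it assigns to the definitions~\eqref{eq:sigma_mxx}--\eqref{eq:sigma_mxy}, so no substantive difference to report.
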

Notice that in~\eqref{eq:energy_glol_ns}, we use the notation
$${\bf u}_\alpha {\bf \Sigma}_\alpha = \begin{pmatrix}
u_\alpha {\bf \Sigma}_{xx,\alpha} + v_\alpha {\bf \Sigma}_{yx,\alpha}\\
u_\alpha {\bf \Sigma}_{xy,\alpha}
+ v_\alpha {\bf \Sigma}_{yy,\alpha}
\end{pmatrix}.$$

\begin{proof}[Proof of prop.~\ref{prop:NS_mc}]
The proof is given in appendix A.
\end{proof}

\begin{remark}
Notice that in the definition~\eqref{eq:sigma_m}, since we consider viscous terms we use a centered
approximation.
\end{remark}


\subsubsection{Simplified rheology}

The viscous terms in the layer-averaged Navier-Stokes system are
difficult to discretize especially when a
discrete version of the energy balance has to be preserved. Hence, we propose a simplified
version of the model given in prop.~\ref{prop:NS_mc}.

First, using simple manipulations, the viscous terms in the layer-averaged
Navier-Stokes can be rewritten and Eq.~\eqref{eq:nsml2} becomes
\begin{eqnarray*}
\frac{\p  h_{\alpha}{\bf u}_{\alpha}}{\p t} && \!\!\!\!\!\!\!\!  +  \nabla_{x,y} .\left(
  h_{\alpha} {\bf u}_{\alpha} \otimes {\bf u}_{\alpha} \right) +
\nabla_{x,y}  \bigl(\frac{g}{2} h h_{\alpha} \bigr) = -gh_\alpha \nabla_{x,y} z_b \nonumber\\
& & + {\bf u}_{\alpha+1/2} G_{\alpha+1/2} - {\bf u}_{\alpha-1/2}
G_{\alpha-1/2} + \nabla_{x,y} .
\bigl( h_\alpha {\bf \Sigma}_\alpha^0 \bigr) - {\bf T}_\alpha\nonumber\\
&& +\Lambda_{\alpha+1/2} ({\bf u}_{{\alpha}+1}-{\bf
    u}_{\alpha}) - \Lambda_{\alpha-1/2} ({\bf u}_{\alpha}-{\bf u}_{{\alpha}-1}) -\kappa_{\alpha}{\bf
  u}_{\alpha} + W_\alpha {\bf t}_s,\quad {\alpha}=1,...,N,\label{eq:nsml2_1}
\end{eqnarray*}
with
\begin{eqnarray}
\Sigma_{xx,\alpha+1/2}^0 & = & \frac{\nu_{\alpha+1/2}}{h_{\alpha+1}+h_\alpha}\bigl( h_{\alpha}\frac{\partial
  u_{\alpha}}{\partial x} + h_{\alpha+1}\frac{\partial u_{\alpha+1}}{\partial x}  \bigr),\label{eq:sigma_mxx_1}\\
\Sigma_{xy,\alpha+1/2}^0 & = & \frac{\nu_{\alpha+1/2}}{h_{\alpha+1}+h_\alpha}\bigl( h_{\alpha}\frac{\partial
  u_{\alpha}}{\partial y} + h_{\alpha+1}\frac{\partial
  u_{\alpha+1}}{\partial y}  \bigr),\label{eq:sigma_mxy_1}\\
T_{x,\alpha+1/2} & = & 2\nu_{\alpha+1/2}\frac{\partial z_{\alpha+1/2}}{\partial x}\frac{\partial u_{\alpha+1}}{\partial x}
+ 2\nu_{\alpha+1/2}\frac{\partial
  z_{\alpha+1/2}}{\partial y}\frac{\partial u_{\alpha+1}}{\partial
  y},\label{eq:sigma_mxy_2}\\
\Lambda_{\alpha+1/2} & = &
2\nu_{\alpha+1/2}\frac{1 + |\nabla_{x,y}
  z_{\alpha+1/2}|^2}{h_{{\alpha}+1}+h_{\alpha}}-\nu_{\alpha+1/2}\nabla_{x,y}
                           . \left(\frac{h_\alpha \nabla_{x,y}
z_{\alpha+1/2}}{h_{{\alpha}+1}+h_{\alpha}}\right),\label{eq:lambda}\\
{\bf T}_{\alpha+1/2} & = & \begin{pmatrix}
T_{x,\alpha+1/2} \\
T_{y,\alpha+1/2} 
\end{pmatrix},\quad
{\bf T}_{\alpha} = \begin{pmatrix}
T_{x,\alpha}\\
T_{y,\alpha}
\end{pmatrix} = \frac{{\bf T}_{\alpha+1/2}+ {\bf T}_{\alpha-1/2}}{2},\label{eq:R}\\
{\bf \Sigma}^0_{\alpha+1/2} & = & \begin{pmatrix}
\Sigma_{xx,\alpha+1/2}^0 & \Sigma_{xy,\alpha+1/2}^0\\
\Sigma_{yx,\alpha+1/2}^0 & \Sigma_{yy,\alpha+1/2}^0
\end{pmatrix},\label{eq:sigma_alpha_1}\\
{\bf \Sigma}_{\alpha}^0 & = & \begin{pmatrix}
\Sigma_{xx,\alpha}^0 & \Sigma_{xy,\alpha}^0\\
\Sigma_{yx,\alpha}^0 & \Sigma_{yy,\alpha}^0
\end{pmatrix} = \frac{{\bf \Sigma}_{\alpha+1/2}^0+ {\bf
    \Sigma}_{\alpha-1/2}^0}{2}. \label{eq:sigma_m_1}
\end{eqnarray}
Considering a large number of layers i.e. $h_\alpha = l_\alpha h
\rightarrow 0$ then the quantity $\Lambda_{\alpha+1/2}$ reduces to
\begin{equation}
\Gamma_{\alpha+1/2} =
2\nu_{\alpha+1/2}\frac{1 + |\nabla_{x,y}
  z_{\alpha+1/2}|^2}{h_{{\alpha}+1}+h_{\alpha}}.
\label{eq:lambda_12}
\end{equation}
Now let us examine the quantity ${\bf T}_{\alpha}$. Once the 
approximation has been made in~\eqref{eq:lambda} replacing it
by~\eqref{eq:lambda_12}, the only way for the layer-averaged model to satisfy
an energy balance is to neglect the quantity ${\bf
  T}_{\alpha}$. The removal of the quantity ${\bf T}_{\alpha}$ is
the mandatory counterpart of the valid approximation made
in~\eqref{eq:lambda}. Moreover, the first component of ${\bf T}_{\alpha}$
writes (for the sake of simplicity, we assume $1<\alpha<N$ and
$h_j=h/N$, $\forall j$)
\begin{eqnarray*}
T_{x,\alpha} & = & -\frac{\nu}{2}
\nabla_{x,y} z_{\alpha+1/2} . \nabla_{x,y} u_{\alpha+1} +
                   \frac{\nu}{2}
                   \nabla_{x,y} z_{\alpha-1/2} . \nabla_{x,y}
                   u_{\alpha-1}\\
& = & -\frac{\nu}{2} \nabla_{x,y} z_{\alpha} . \nabla_{x,y}
      (u_{\alpha+1} - u_{\alpha-1}) -
                   \frac{\nu}{2}
                   \nabla_{x,y} (u_{\alpha+1} + u_{\alpha-1}) . \nabla_{x,y}
                   h_{\alpha},
\end{eqnarray*}
and considering smooth solutions (meaning $\frac{\partial u}{\partial z}$ is bounded) and a large value of $N$, we have
$$u_{\alpha+1} - u_{\alpha-1} \rightarrow 0,\quad h_\alpha\rightarrow
0,$$
and hence $T_{x,\alpha}$ can be neglected compared to the other
rheology terms.

\begin{remark}
The approximations concerning the viscous terms ${\bf \Lambda}_{\alpha\pm 1/2}$ and ${\bf T}_\alpha$ can be explained geometrically as follows. The second term in~\eqref{eq:lambda} and the vector ${\bf T}_\alpha$ involve the quantities $\nabla_{x,y} z_{\alpha\pm 1/2}$ i.e. the gradient of the boundaries of each layer and arise from the layer averaging formulation. Except in few particular cases (equilibrium at rest with flat topography,\ldots), when $N$ is small the two quantities $\nabla_{x,y} z_{\alpha-1/2}$ and $\nabla_{x,y} z_{\alpha+1/2}$ significantly differ (see Fig.\ref{fig:free}-{\it (b)}). Whereas for smooth solutions and $N$ large $\nabla_{x,y} z_{\alpha-1/2} \approx \nabla_{x,y} z_{\alpha+1/2}$ and the corresponding contributions in ${\bf \Lambda}_{\alpha\pm 1/2}$ and ${\bf T}_\alpha$ can be neglected.
\end{remark}

So finally, 
with a simplified expression of the rheology terms, the layer-averaged
hydrostatic Navier-Stokes system given in prop.\ref{prop:NS_mc} becomes
\begin{eqnarray}
\sum_{\alpha=1}^N \frac{\partial  h_\alpha  }{\partial t } && \!\!\!\!\!\!\!\!
+ \sum_{\alpha=1}^N  \ \nabla_{x,y} .   (h_\alpha  {\bf u} _\alpha)  =0, \label{eq:nsml1_22} \\
\frac{\p  h_{\alpha}{\bf u}_{\alpha}}{\p t} && \!\!\!\!\!\!\!\!\!  +  \nabla_{x,y} .\left(
  h_{\alpha} {\bf u}_{\alpha} \otimes {\bf u}_{\alpha} \right) +
\nabla_{x,y}  \bigl(\frac{g}{2} h h_{\alpha} \bigr) = -gh_\alpha \nabla_{x,y} z_b \nonumber\\
& & + {\bf u}_{\alpha+1/2} G_{\alpha+1/2} - {\bf u}_{\alpha-1/2}
G_{\alpha-1/2} + \nabla_{x,y} .
\bigl( h_\alpha {\bf \Sigma}_\alpha^0 \bigr) \nonumber\\
&& +\Gamma_{\alpha+1/2} ({\bf u}_{{\alpha}+1}-{\bf
    u}_{\alpha}) - \Gamma_{\alpha-1/2} ({\bf u}_{\alpha}-{\bf u}_{{\alpha}-1}) -\kappa_{\alpha}{\bf
  u}_{\alpha} + W_\alpha {\bf t}_s,\label{eq:nsml2_22}
\end{eqnarray}
with $\Sigma^0$ defined
by~\eqref{eq:sigma_mxx_1},\eqref{eq:sigma_mxy_1},\eqref{eq:sigma_alpha_1} and
\eqref{eq:sigma_m_1}.
For smooth solutions, the system~\eqref{eq:nsml1_22}-\eqref{eq:nsml2_22}
admits the energy balance
\begin{eqnarray}
\frac{\partial }{\partial t} \sum_{\alpha=1}^N E_{\alpha} &+&
\nabla_{x,y} . \sum_{\alpha=1}^N {\bf u}_\alpha\left(E_{\alpha} +
 \frac{g}{2} h_\alpha h - h_\alpha{\bf \Sigma}_\alpha^0 \right) \nonumber\\
 & = &  -\sum_{\alpha=1}^{N-1} \frac{|{\bf u}_{\alpha+1/2}
 - {\bf u}_\alpha|^2}{2} |G_{\alpha+1/2}| \nonumber\\
 & & -\sum_{\alpha=1}^{N-1} \frac{h_{\alpha+1} + h_\alpha}{2\nu}({\bf \Sigma}_{\alpha+1/2}^0)^2
 - \sum_{\alpha=1}^{N-1} \Gamma_{\alpha+1/2} |{\bf u}_{{\alpha}+1}-{\bf
     u}_{\alpha}|^2 -\kappa |{\bf u}_1|^2 + W {\bf u}_N . {\bf t}_s.
\label{eq:energy_glol_ns1}
\end{eqnarray}
The proof of the energy balance~\eqref{eq:energy_glol_ns1} is similar to the one given in prop.~\ref{prop:NS_mc}.

\begin{remark}
The layer-averaged Navier-Stokes system defined by~\eqref{eq:nsml1_22}-\eqref{eq:nsml2_22} has the form
\begin{equation}
\frac{\partial U}{\partial t} + \nabla_{x,y} . F(U) = S_b(U) +
S_e(U,\partial_t U,\partial_x U) + S_{v,f}(U),
\label{eq:glo}
\end{equation}
where $U=\left(
h,
q_{x,1},
\ldots,
q_{x,N},
q_{y,1},
\ldots,
q_{y,N}\right)^T
$, and
\begin{equation*}
S_b(U) = \left(
0,
gh_1 \frac{\partial z_b}{\partial x},
\ldots,
gh_N \frac{\partial z_b}{\partial x},
gh_1 \frac{\partial z_b}{\partial y},
\ldots,
gh_N \frac{\partial z_b}{\partial y}\right)^T,
\label{eq:SbU}
\end{equation*}
with $q_{x,\alpha}=h_\alpha u_{\alpha}$, $q_{y,\alpha}=h_\alpha
v_{\alpha}$. We denote with $F(U)$ the fluxes of the conservative part,
and with $S_e(U,\partial_t U,\partial_x U)$ and $S_{v,f}(U)$ the source terms,
 representing respectively the
momentum exchanges and the viscous, wind and
friction effects.

The numerical scheme for the system~\eqref{eq:glo} will be given in Section~\ref{sec:NumMet}.
\end{remark}

\section{Kinetic description for the Euler system}
\label{sec:kin}

In this section we give a kinetic interpretation of the
system~\eqref{eq:massesvml}-\eqref{eq:energy_mcl}. The numerical
scheme for the
system~\eqref{eq:massesvml}-\eqref{eq:mvtsvml},\eqref{eq:def_w}
will be deduced from the kinetic description.

\subsection{Preliminaries}

We begin this section by recalling the classical kinetic approach~--
used in \cite{simeoni} for example~-- for the 1d Saint-Venant system
\begin{equation}\begin{array}{l}
	 \partial_t h+\partial_x(hu)=0,\\
	\partial_t(hu)+\partial_x(hu^2+g\frac{h^2}{2})+gh\partial_x z_b=0,
	\label{eq:SV}
	\end{array}
\end{equation}
with the water depth $h(t,x)\geq 0$, the water velocity
$u(t,x)\in\R$ and a slowly varying topography $z_b(x)$.

The kinetic Maxwellian is given by
\begin{equation}
	M(U,\xi)=\frac{1}{g\pi}\Bigl(2gh-(\xi-u)^2\Bigr)_+^{1/2},
	\label{eq:kinmaxw_1d}
\end{equation}
where $U=(h,hu)^T$, $\xi\in\R$ and $x_+\equiv\max(0,x)$ for any $x\in\R$.
It satisfies the following moment relations,
\begin{equation}\begin{array}{c}
	\int_\R \begin{pmatrix} 1\\ \xi \end{pmatrix}
                  M(U,\xi)\,d\xi=U,\qquad \int_\R \xi^2 M(U,\xi)\,d\xi=hu^2+g\frac{h^2}{2}.
	\label{eq:kinmom_1d}
	\end{array}
\end{equation}
These definitions allow us to obtain a {\sl kinetic representation} of the
Saint-Venant system.
\begin{lemma}
If the topography $z_b(x)$ is Lipschitz continuous,
the pair of functions $(h, hu)$ is a weak solution to the Saint-Venant system~\eqref{eq:SV}
if and only if $M(U,\xi)$ satisfies the kinetic equation
\begin{equation}
	\partial_t M+\xi\partial_x M-g(\partial_x z_b)\partial_\xi M=Q,
	\label{eq:kinrepres}
\end{equation}
for some ``collision term'' $Q(t,x,\xi)$ that satisfies, for a.e. $(t,x)$,
\begin{equation}
	\int_\R Q d\xi = \int_\R \xi Qd \xi = 0.
	\label{eq:kinrepresintcoll}
\end{equation}
\label{lemma:sv_kin_rep}
\end{lemma}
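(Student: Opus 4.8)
The plan is to establish the equivalence by taking the velocity moments of the kinetic equation~\eqref{eq:kinrepres} against $1$ and $\xi$ and matching them term by term with the two equations of the Saint-Venant system~\eqref{eq:SV}, using the moment identities~\eqref{eq:kinmom_1d}. Since the computation is reversible, it delivers both implications simultaneously: reading the resulting identities in one direction gives the ``only if'' part (define $Q$ by~\eqref{eq:kinrepres} and deduce the constraints~\eqref{eq:kinrepresintcoll} from the PDEs), and in the other direction gives the ``if'' part (the constraints~\eqref{eq:kinrepresintcoll} force the two moments to vanish, and these are exactly the two conservation laws). So the bulk of the work is simply to record how each of the three terms of~\eqref{eq:kinrepres} behaves under the two moments.

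First I would compute $\int_\R Q\,d\xi$. By~\eqref{eq:kinmom_1d} one has $\int_\R \partial_t M\,d\xi = \partial_t h$ and $\int_\R \xi\,\partial_x M\,d\xi = \partial_x(hu)$; since the Maxwellian~\eqref{eq:kinmaxw_1d} has compact support in $\xi$ (it vanishes for $|\xi-u|\ge\sqrt{2gh}$), the topography term gives $\int_\R \partial_\xi M\,d\xi = 0$. Hence $\int_\R Q\,d\xi = \partial_t h + \partial_x(hu)$, which vanishes precisely when the mass equation holds.

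Next I would compute $\int_\R \xi Q\,d\xi$. The first two terms produce $\partial_t(hu)$ and $\partial_x\!\int_\R \xi^2 M\,d\xi = \partial_x\bigl(hu^2 + g\tfrac{h^2}{2}\bigr)$ by the second identity in~\eqref{eq:kinmom_1d}. For the topography term, an integration by parts in $\xi$ — legitimate thanks to the compact support of $M$ — gives $\int_\R \xi\,\partial_\xi M\,d\xi = -\int_\R M\,d\xi = -h$, so this term contributes $+gh\,\partial_x z_b$. Therefore $\int_\R \xi Q\,d\xi = \partial_t(hu) + \partial_x\bigl(hu^2 + g\tfrac{h^2}{2}\bigr) + gh\,\partial_x z_b$, which vanishes exactly when the momentum equation holds.

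The point requiring care — and where the Lipschitz hypothesis on $z_b$ enters — is that these manipulations must be carried out in the weak (distributional) sense, since $(h,hu)$ is only assumed to be a weak solution. The nonconservative product $g(\partial_x z_b)\partial_\xi M$ is well defined because $\partial_x z_b\in L^\infty$ while $M$ is Lipschitz in $\xi$ with compact support, so that multiplying by a test function and integrating in $\xi$ against $1$ or $\xi$ commutes with the $t$- and $x$-derivatives as used above. I expect this justification of the moment exchange and of the boundary-free integration by parts in $\xi$ to be the only genuine obstacle; the algebra of the moments is entirely dictated by the construction~\eqref{eq:kinmaxw_1d} of the Maxwellian and is otherwise routine.
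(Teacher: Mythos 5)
Your proposal is correct and follows essentially the same route as the paper's proof: multiply the kinetic equation by $(1,\xi)^T$, integrate in $\xi$ using the moment identities~\eqref{eq:kinmom_1d} and an integration by parts on the $\partial_\xi M$ term, and observe that the computation is reversible so that both implications follow at once. You merely spell out the moment computations (in particular $\int_\R \xi\,\partial_\xi M\,d\xi=-h$) and the weak-sense justification more explicitly than the paper does.
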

\begin{proof}
If~\eqref{eq:kinrepres} and~\eqref{eq:kinrepresintcoll} are satisfied,
we can multiply \eqref{eq:kinrepres} by $(1,\xi)^T$, and integrate with respect to $\xi$.
Using~\eqref{eq:kinmom_1d} and \eqref{eq:kinrepresintcoll} and integrating by parts
the term in $\partial_\xi M$, we obtain \eqref{eq:SV}.
Conversely, if $(h, hu)$ is a weak solution to \eqref{eq:SV}, just define $Q$ by \eqref{eq:kinrepres};
it will satisfy \eqref{eq:kinrepresintcoll} according to the same computations.
\end{proof}
The standard way to use Lemma \ref{lemma:sv_kin_rep} is to write
a kinetic relaxation equation~\cite{JSM_entro,BGKcons,bouchut_BGK}, like
\begin{equation}
	\partial_t f+\xi\partial_x f-g(\partial_x z_b)\partial_\xi f=\frac{M-f}{\epsilon},
	\label{eq:kinrelax}
\end{equation}
where $f(t,x,\xi)\geq 0$, $M=M(U,\xi)$ with $U(t,x)=\int(1,\xi)^Tf(t,x,\xi)d\xi$,
and $\epsilon>0$ is a relaxation time. In the limit $\epsilon\to 0$ we recover formally
the formulation \eqref{eq:kinrepres}, \eqref{eq:kinrepresintcoll}.
We refer to \cite{BGKcons} for general considerations on such kinetic relaxation models
without topography, the case with topography being introduced in \cite{simeoni}.
Note that the notion of {\sl kinetic representation} as \eqref{eq:kinrepres}, \eqref{eq:kinrepresintcoll}
differs from the so called {\sl kinetic formulations} where a large set of entropies is involved,
see \cite{perthame}. For systems of conservation laws, these kinetic formulations
include non-advective terms that prevent from writing down simple approximations.
In general, kinetic relaxation approximations can be compatible with just a single entropy.
Nevertheless this is enough for proving the convergence as $\varepsilon\to 0$, see \cite{BB}.\\

\subsection{Kinetic interpretation}
\label{subsec:kin_description}

In this paragaph, we give a kinetic interpretation of the model~\eqref{eq:massesvml}-\eqref{eq:mvtsvml},\eqref{eq:energy_glol}.

To build the Gibbs equilibria, we choose the function
\begin{equation}
\chi_0 (z_1,z_2) = \frac{1}{4\pi}\1_{z_1^2 + z_2^2 \leq 4}.
\label{eq:chi0}
\end{equation}
This choice corresponds to the 2d version of the kinetic
maxwellian used in 1d (see remark~\ref{rem:kim_max})
and we have
\begin{equation}
	M_\alpha = M(U_\alpha,\xi,\gamma)= \frac{h_\alpha}{c^2} \chi_0 \left( \frac{\xi - u_\alpha}{c},\frac{\gamma - v_\alpha}{c}\right),
	\label{eq:kinmaxw}
\end{equation}
with $c = \sqrt{\frac{g}{2}h}$
\begin{equation}
U_\alpha =  (h_\alpha, h_\alpha u_\alpha, h_\alpha v_\alpha)^T,\label{eq:u_alpha}
\end{equation}
and where $(\xi,\gamma) \in \R^2$. In other words, we have $M_\alpha =
\frac{l_\alpha}{2g\pi}\1_{(\xi-u_\alpha)^2+(\gamma-v_\alpha)^2\leq
  2gh}$.
\begin{remark}
Starting from the 2d maxwellian in the single layer case i.e.
\begin{equation}
M_{sv} =
\frac{1}{2g\pi}\1_{(\xi-u)^2+(\gamma-v)^2\leq
  2gh},
\label{eq:M1d}
\end{equation}
and computing its integral w.r.t. the variable $\gamma$ yields
$$\int_{\R} M_{sv} d\gamma =
\int_{v-\sqrt{(2gh-(\xi-u)^2)_+}}^{v+\sqrt{(2gh-(\xi-u)^2)_+}} \frac{1}{2g\pi}
d\gamma = \frac{1}{g\pi}\sqrt{(2gh-(\xi-u)^2)_+},$$
that is exactly the expression~\eqref{eq:kinmaxw_1d}.
\label{rem:kim_max}
\end{remark}
The quantity $M_\alpha$ satisfies the following moment relations
\begin{equation}
\begin{array}{l}
	\disp \int_{\R^2} \kxi M(U_\alpha,\xi,\gamma)\,d\xi d\gamma
        = \begin{pmatrix} h_\alpha\\ h_\alpha u_\alpha \\ h_\alpha
          v_\alpha \end{pmatrix},\quad
	\disp\int_{\R^2}  \begin{pmatrix} \xi^2\\ \xi\gamma \\
          \gamma^2 \end{pmatrix} M(U_\alpha,\xi,\gamma)\,d\xi
        d\gamma=\begin{pmatrix} h_\alpha
        u_\alpha^2+g\frac{h_\alpha h}{2}\\
h_\alpha u_\alpha v_\alpha \\
h_\alpha v_\alpha^2+g\frac{h_\alpha h}{2}
\end{pmatrix}
.
	\label{eq:kinmom}
	\end{array}
\end{equation}
The interest of the function $\chi_0$ and hence the particular form \eqref{eq:kinmaxw} lies in its link with a kinetic entropy.
Consider the kinetic entropy
\begin{equation}
	H(f,\xi,\gamma,z_b)=
          \frac{\xi^2+\gamma^2}{2}f+gz_b f,
	\label{eq:kinH}
\end{equation}
where $f\geq 0$, $(\xi,\gamma)\in\R^2$, $z_b\in\R$. Then one can check the relations
\begin{equation}
\int_{\R^2} \kxi H(M_\alpha,\xi,\gamma) d\xi d\gamma
= \begin{pmatrix}
E_\alpha = \frac{h_\alpha}{2} (u_\alpha^2 + v_\alpha^2)+
\frac{g}{2} h_\alpha (h + 2z_b)  \\
u_\alpha
(E_\alpha + \frac{g}{2} h_\alpha h)\\ v_\alpha
(E_\alpha + \frac{g}{2} h_\alpha h).
\end{pmatrix}
\label{eq:kinmom_E}
\end{equation}

Let us introduce the Gibbs equilibria $N_{\alpha+1/2}$ defined by for
$\alpha=0,\ldots,N$ by
\begin{multline}
N_{\alpha+1/2} = N(U_{\alpha+1/2},\xi) = \frac{G_{\alpha+1/2}}{c^2} \
\chi_0 \left(\frac{\xi -  u_{\alpha+1/2}}{c},\frac{\gamma -
    v_{\alpha+1/2}}{c}\right) \\
= \frac{G_{\alpha+1/2}}{g\pi h} \1_{(\xi -
  u_{\alpha+1/2})^2+(\gamma -  v_{\alpha+1/2})^2\leq 2gh} = \frac{G_{\alpha+1/2}}{h}M_{\alpha+1/2},
\label{eq:Nbis}
\end{multline}
where $G_{\alpha+1/2}$ is defined by~\eqref{eq:Qalphabis} and
$u_{\alpha+1/2}$,$v_{\alpha+1/2}$ are given
by~\eqref{eq:upwind_uT}.
The quantity $N_{\alpha+1/2}$ satisfies the following moment relations
\begin{equation}
\int_{\R^2} \kxi N_{\alpha+1/2} d\xi d\gamma = \begin{pmatrix}
  G_{\alpha+1/2}\\ u_{\alpha+1/2}G_{\alpha+1/2}\\
  v_{\alpha+1/2}G_{\alpha+1/2}\end{pmatrix},\quad
\int_{\R^2} \begin{pmatrix} \frac{\xi^2}{2}\\
  \frac{\gamma^2}{2} \end{pmatrix}
N_{\alpha+1/2} d\xi d\gamma = \begin{pmatrix}
  \left( \frac{u_{\alpha+1/2}^2}{2} + \frac{g}{4}h
  \right) G_{\alpha+1/2}\\
  \left( \frac{v_{\alpha+1/2}^2}{2} + \frac{g}{4}h
  \right) G_{\alpha+1/2}\end{pmatrix}.
\label{eq:exchange_kin}
\end{equation}
Notice that from~\eqref{eq:Qalphabis}, we
can give a kinetic interpretation on the exchange terms under the form
\begin{equation}
G_{\alpha+1/2} = -\sum_{j=1}^N \Bigl(\sum_{p=1}^\alpha l_p - \1_{j\leq
  \alpha}\Bigr) \int_{\R^2} \begin{pmatrix} \xi\\ \gamma
    \end{pmatrix} . \nabla_{x,y} M_j
d\xi d\gamma,
\label{eq:Qalphabis_kin}
\end{equation}
for $\alpha=1,\ldots,N$.

Then we have the two following results.

\begin{proposition}
The functions ${\bf u}^{N}$ defined by~\eqref{eq:ulayer} and $h$ are strong solutions of the system
(\ref{eq:massesvml})-(\ref{eq:mvtsvml}) if and only if the sets of
equilibria $\{M_\alpha\}_{\alpha=1}^N$,~$\{N_{\alpha+1/2}\}_{\alpha=0}^N$
are solutions of the kinetic equations defined by
\begin{equation}
({\cal B}_\alpha)\qquad\frac{\partial M_\alpha}{\partial t} + \begin{pmatrix} \xi\\ \gamma
    \end{pmatrix} . \nabla_{x,y} M_\alpha
  -g \nabla_{x,y} z_b . \nabla_{\xi,\gamma} M_\alpha
 - N_{\alpha+1/2} + N_{\alpha-1/2}
 = Q_{\alpha},
\label{eq:gibbsbis}
\end{equation}
for $\alpha=1,\ldots,N$. The quantities $Q_{\alpha} = Q_{\alpha}(t,x,y,\xi,\gamma)$
 are ``collision terms''  equal to zero at the
macroscopic level, i.e.\ they satisfy  a.e.\ for values of $(t,x,y)$
\begin{equation}
\int_{\mathbb{R}^2} Q_{\alpha} d\xi d\gamma = \int_{\mathbb{R}^2} \xi
Q_{\alpha} d\xi d\gamma=\int_{\mathbb{R}^2} \gamma
Q_{\alpha} d\xi d\gamma =0.
\label{eq:collisionbis}
\end{equation}
\label{prop:kinetic_sv_mcl}
\end{proposition}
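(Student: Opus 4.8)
The plan is to adapt the argument of Lemma~\ref{lemma:sv_kin_rep} layer by layer, the whole proof reducing to taking the first three moments in $(\xi,\gamma)$ of the kinetic equations $({\cal B}_\alpha)$ and invoking the moment identities already recorded for the equilibria. For the ``if'' direction, suppose the $\{M_\alpha\}$ and $\{N_{\alpha+1/2}\}$ solve $({\cal B}_\alpha)$ with collision terms obeying~\eqref{eq:collisionbis}. I would integrate $({\cal B}_\alpha)$ against $1$, then against $\xi$, then against $\gamma$ over $(\xi,\gamma)\in\R^2$. The time-derivative and transport terms are handled by the moment relations~\eqref{eq:kinmom}: the identity $\int_{\R^2}(1,\xi,\gamma)^T M_\alpha\,d\xi d\gamma = (h_\alpha, h_\alpha u_\alpha, h_\alpha v_\alpha)^T$ turns $\int\partial_t M_\alpha$ and $\int(\xi,\gamma)\cdot\nabla_{x,y}M_\alpha$ into $\partial_t h_\alpha$ and $\nabla_{x,y}\cdot(h_\alpha{\bf u}_\alpha)$, while the $\xi$- and $\gamma$-weighted transport terms produce the flux $\nabla_{x,y}\cdot(h_\alpha{\bf u}_\alpha\otimes{\bf u}_\alpha)+\nabla_{x,y}(\tfrac{g}{2}hh_\alpha)$ via the second moments. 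The exchange terms come from~\eqref{eq:exchange_kin}, which give $\int_{\R^2}(1,\xi,\gamma)^T N_{\alpha\pm1/2}\,d\xi d\gamma = (G_{\alpha\pm1/2}, u_{\alpha\pm1/2}G_{\alpha\pm1/2}, v_{\alpha\pm1/2}G_{\alpha\pm1/2})^T$, reproducing exactly the interface terms of~\eqref{eq:mvtsvml}, and the collision terms drop out by~\eqref{eq:collisionbis}. Summing the $1$-moment identity over $\alpha$ and using $G_{1/2}=G_{N+1/2}=0$ returns~\eqref{eq:massesvml}, and the $\xi$- and $\gamma$-moments return the two components of~\eqref{eq:mvtsvml}.

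The only step requiring real care, and the one I would single out as the main obstacle, is the topography term, which is dispatched by integration by parts in the velocity variables. Since $\chi_0$ and hence each $M_\alpha$ has compact support in $(\xi,\gamma)$, the boundary terms vanish: $\int_{\R^2}\nabla_{\xi,\gamma}M_\alpha\,d\xi d\gamma=0$ kills the topography contribution to the mass equation, whereas $\int_{\R^2}\xi\,\partial_\xi M_\alpha\,d\xi d\gamma = -h_\alpha$ together with the vanishing of the mixed term $\int_{\R^2}\xi\,\partial_\gamma M_\alpha\,d\xi d\gamma$ converts $-g\nabla_{x,y}z_b\cdot\nabla_{\xi,\gamma}M_\alpha$ into precisely the source $-gh_\alpha\nabla_{x,y}z_b$ of~\eqref{eq:mvtsvml}.

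For the converse, I would simply \emph{define} $Q_\alpha$ by~\eqref{eq:gibbsbis}, that is, as the defect between the left-hand side of $({\cal B}_\alpha)$ and zero, and then check~\eqref{eq:collisionbis}: the very same three moment computations show that $\int_{\R^2} Q_\alpha$, $\int_{\R^2}\xi Q_\alpha$ and $\int_{\R^2}\gamma Q_\alpha$ equal the residuals of~\eqref{eq:c0_mc} and of the two components of~\eqref{eq:mvtsvml}, which vanish because $(h,{\bf u}^N)$ is assumed to solve the macroscopic system. I do not expect a genuine difficulty here: the equilibria $M_\alpha$ and $N_{\alpha+1/2}$ have been arranged so that their moments are exactly the conserved variables and the fluxes, so the argument is essentially a bookkeeping of the identities~\eqref{eq:kinmom} and~\eqref{eq:exchange_kin}. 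One point I would state explicitly is that the upwind choice~\eqref{eq:upwind_uT} enters only through the definition of $N_{\alpha+1/2}$, so the interface velocities $u_{\alpha\pm1/2}$, $v_{\alpha\pm1/2}$ in the recovered momentum equation automatically match those of~\eqref{eq:mvtsvml}.
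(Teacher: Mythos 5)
Your proposal is correct and follows essentially the same route as the paper: integrating $({\cal B}_\alpha)$ against $(1,\xi,\gamma)^T$, invoking the moment identities \eqref{eq:kinmom} and \eqref{eq:exchange_kin}, integrating by parts in $(\xi,\gamma)$ for the topography term, and recovering \eqref{eq:massesvml} by summing \eqref{eq:c0_mc} over $\alpha$ with $G_{1/2}=G_{N+1/2}=0$; the converse by defining $Q_\alpha$ as the defect is exactly the mechanism of Lemma~\ref{lemma:sv_kin_rep}. Your write-up is in fact more explicit than the paper's two-line proof, and the details you supply (compact support of $\chi_0$, $\int\xi\,\partial_\xi M_\alpha\,d\xi d\gamma=-h_\alpha$) are accurate.
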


\begin{proposition}
The solutions of~\eqref{eq:gibbsbis} are entropy solutions if
\begin{equation}
\disp \frac{\partial H(M_\alpha)}{\partial t} + \begin{pmatrix} \xi\\ \gamma
    \end{pmatrix} . \nabla_{x,y} H(M_\alpha) 
-g \nabla_{x,y} z_b . \nabla_{\xi,\gamma} H(M_\alpha)
\leq  (H(N_{\alpha+1/2}) - H(N_{\alpha-1/2})),
\label{eq:kin_entro1}
\end{equation}
with the notation $H(M) = H(M,\xi,\gamma,z_b)$ and $H$ defined by~\eqref{eq:kinH}.
The integration in $\xi,\gamma$ of relation~\eqref{eq:kin_entro1} gives
\begin{equation*}
\disp \frac{\partial E_\alpha}{\partial t}  + \nabla_{x,y} . {\bf u}_\alpha (E_\alpha +
\frac{g}{2} h_\alpha h)
\leq l_\alpha \left(
  \frac{|{\bf u}_{\alpha+1/2}|^2}{2} + g z_b \right)
G_{\alpha+1/2}\\
- l_\alpha \left(
  \frac{|{\bf u}_{\alpha-1/2}|^2}{2} + g z_b \right) G_{\alpha-1/2}.
\end{equation*}
\label{prop:entropy_kin}
\end{proposition}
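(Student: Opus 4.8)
The plan is to treat the statement as two linked claims: first, that the microscopic inequality~\eqref{eq:kin_entro1} is the correct kinetic entropy condition, obtained by testing the transport equation~\eqref{eq:gibbsbis} against the weight $\psi(\xi,\gamma,z_b):=\frac{\xi^2+\gamma^2}{2}+gz_b$; and second, that integrating~\eqref{eq:kin_entro1} in $(\xi,\gamma)$ reproduces the macroscopic energy inequality, which is precisely the defining property of an entropy solution. The key structural observation is that the kinetic entropy~\eqref{eq:kinH} is \emph{linear} in its first argument, $H(f,\xi,\gamma,z_b)=\psi f$, so that every manipulation reduces to multiplying~\eqref{eq:gibbsbis} by the scalar $\psi$.

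For the first claim I would multiply~\eqref{eq:gibbsbis} by $\psi$ and reassemble the transport part. The decisive point is that $\psi$ is a collision invariant of the free-streaming-with-topography operator, namely $\begin{pmatrix}\xi\\\gamma\end{pmatrix}\cdot\nabla_{x,y}\psi-g\nabla_{x,y}z_b\cdot\nabla_{\xi,\gamma}\psi=0$, which follows at once from $\nabla_{x,y}\psi=g\nabla_{x,y}z_b$ and $\nabla_{\xi,\gamma}\psi=(\xi,\gamma)^T$. Consequently the product rule gives
\[
\partial_t(\psi M_\alpha)+\begin{pmatrix}\xi\\\gamma\end{pmatrix}\cdot\nabla_{x,y}(\psi M_\alpha)-g\nabla_{x,y}z_b\cdot\nabla_{\xi,\gamma}(\psi M_\alpha)=\psi\Big(N_{\alpha+1/2}-N_{\alpha-1/2}+Q_\alpha\Big),
\]
where I used~\eqref{eq:gibbsbis} for the bracketed transport of $M_\alpha$ and where the two topography cross-terms produced by differentiating $\psi$ cancel thanks to the invariance above. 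Since $H(N_{\alpha\pm1/2})=\psi N_{\alpha\pm1/2}$, the left-hand side is exactly that of~\eqref{eq:kin_entro1}, so~\eqref{eq:kin_entro1} is equivalent to the single scalar sign condition $\psi Q_\alpha\le 0$. This is the entropy-dissipation requirement on the collision term, compatible with the moment constraints~\eqref{eq:collisionbis} and satisfied in the vanishing-relaxation limit of the associated BGK model; imposing it is what it means here for a solution of~\eqref{eq:gibbsbis} to be an entropy solution.

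For the second claim I would integrate~\eqref{eq:kin_entro1} over $(\xi,\gamma)\in\R^2$ term by term. The time term gives $\partial_t\int H(M_\alpha)\,d\xi d\gamma=\partial_t E_\alpha$ by the first component of~\eqref{eq:kinmom_E}; the transport term gives $\nabla_{x,y}\cdot\int(\xi,\gamma)^T H(M_\alpha)\,d\xi d\gamma=\nabla_{x,y}\cdot\big({\bf u}_\alpha(E_\alpha+\frac{g}{2}h_\alpha h)\big)$ by its second and third components; and the topography term vanishes, $-g\nabla_{x,y}z_b\cdot\int\nabla_{\xi,\gamma}H(M_\alpha)\,d\xi d\gamma=0$, because $M_\alpha$ has compact support in $(\xi,\gamma)$, so the $\xi$- and $\gamma$-derivatives integrate to zero. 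The right-hand side $\int\big(H(N_{\alpha+1/2})-H(N_{\alpha-1/2})\big)\,d\xi d\gamma$ is then read off directly from the moment relations~\eqref{eq:exchange_kin}, producing the interface contributions of the form $\big(\tfrac{|{\bf u}_{\alpha\pm1/2}|^2}{2}+\cdots\big)G_{\alpha\pm1/2}$ announced in the statement, and one checks finally that the resulting inequality is consistent with the exact layer energy balance of Proposition~\ref{prop:energy_bal}.

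I expect the main obstacle to be the first claim rather than the integration: the clean reassembly of the entropy flux hinges on recognising that $\psi$ is annihilated by the kinetic transport operator so that the two topography cross-terms cancel exactly, and on correctly identifying the entropy condition as the sign of $\psi Q_\alpha$. Once this is in place the second step is routine, since the velocity moments of $M_\alpha$ and of $N_{\alpha\pm1/2}$ are already recorded in~\eqref{eq:kinmom_E} and~\eqref{eq:exchange_kin}, and the only analytic input is the compact support of the Gibbs equilibria that kills the topography term.
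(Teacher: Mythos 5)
Your proposal is correct and follows essentially the same route as the paper: multiply the kinetic equation~\eqref{eq:gibbsbis} by $H'=\psi=\frac{\xi^2+\gamma^2}{2}+gz_b$ (licit because $H$ is linear in $f$), identify the entropy condition with the sign of $\psi Q_\alpha$, and then integrate in $(\xi,\gamma)$ using the moment relations~\eqref{eq:kinmom_E} and~\eqref{eq:exchange_kin}. If anything you are more careful than the paper's own proof, which asserts $H'\,\partial_v M_\alpha=\partial_v H(M_\alpha)$ for each $v$ separately, whereas the identity really only holds for the full transport operator via the cancellation of the two topography cross-terms that you make explicit through the collision-invariance of $\psi$.
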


\begin{proof}[Proof of proposition~\ref{prop:kinetic_sv_mcl}]
The proof relies on averages w.r.t the variables $\xi,\gamma$ of
Eq.~\eqref{eq:gibbsbis} against the vector $(1,\xi,\gamma)^T$. Using
relations~\eqref{eq:kinmom},\eqref{eq:Nbis},\eqref{eq:exchange_kin}
and the properties of the collision terms~\eqref{eq:collisionbis}, the quantities
$$\int_{\R^2} ({\cal B}_\alpha)\ d\xi
d\gamma,\quad \int_{\R^2} \xi ({\cal B}_\alpha)\ d\xi d\gamma,\quad\mbox{and}\quad \int_{\R^2} \gamma ({\cal B}_\alpha)\ d\xi d\gamma,$$
respectively give Eqs.~\eqref{eq:c0_mc} and \eqref{eq:mvtsvml}. The
sum for $\alpha=1$ to $N$ of Eqs.~\eqref{eq:c0_mc} with~\eqref{eq:Qalphabis}
gives~\eqref{eq:massesvml} that completes
the proof.
\end{proof}

\begin{proof}[Proof of prop.~\ref{prop:entropy_kin}]
The proof is obtained multiplying~\eqref{eq:gibbsbis} by
$H^\prime_\alpha(\overline{M}_\alpha,\xi,\gamma,z_b)$.
Indeed, it is easy to see that
$$H^\prime_\alpha(M_\alpha,\xi,\gamma,z_b)
\frac{\partial M_\alpha}{\partial v}
=
\frac{\partial}{\partial v} H_\alpha(M_\alpha,\xi,\gamma,z_b),$$
for $v=t,x,y,\xi,\gamma$.
Likewise for the quantity
$H^\prime_\alpha(M_\alpha,\xi,\gamma,z_b) N_{\alpha+1/2}$, we have
$$H^\prime_\alpha(M_\alpha,\xi,\gamma,z_b) N_{\alpha+1/2} =
H(N_{\alpha+1/2},\xi,\gamma,z_b).$$
So finally, Eq.~\eqref{eq:gibbsbis} multiplied by
$H^\prime_\alpha(M_\alpha,\xi,\gamma,z_b)$ gives
$$
\disp \frac{\partial H_\alpha}{\partial t} + \begin{pmatrix} \xi\\ \gamma
    \end{pmatrix} . \nabla_{x,y} H_\alpha -g \nabla_{x,y} z_b . \nabla_{\xi,\gamma} H_\alpha
\leq
\left( \frac{\xi^2+\gamma^2}{2}  + g z_b \right)
( N_{\alpha+1/2} - N_{\alpha-1/2}).
$$
It remains to calculate the sum of the preceding relations from
$\alpha=1,\ldots,N$ and to integrate the obtained relation in $\xi,\gamma$ over $\R^2$
that completes the proof.
\end{proof}

\begin{remark}
If we introduce a $(2N+1)\times N$ matrix ${\cal K(\xi,\gamma)}$
defined by
$$
{\cal K}_{1,j} = 1,\quad
 {\cal K}_{i+1,j} =
\xi\delta_{i,j},\quad
{\cal K}_{i+N+1,j+N} = \gamma\delta_{i,j},
$$
for $i,j=1,\ldots,N$ with $\delta_{i,j}$ the Kronecker symbol. Then, using Prop.~\ref{prop:kinetic_sv_mcl},
we can write
\begin{eqnarray}
U= \int_{\R^2} {\cal K( \xi,\gamma)} M(\xi,\gamma) d \xi d\gamma,
\quad F(U) = \int_{\R^2}  \begin{pmatrix} \xi\\ \gamma \end{pmatrix} {\cal K( \xi,\gamma)}
M(\xi,\gamma) d \xi d\gamma,
\label{eq:fx}\\
S_e(U) = \int_{\R^2} {\cal K( \xi,\gamma)} N(\xi,\gamma) d \xi d\gamma,
\label{eq:fxx}
\end{eqnarray}
with $M(\xi,\gamma)=(M(U_1,\xi,\gamma),\ldots,M(U_N,\xi,\gamma))^T$
and
$$N(\xi,\gamma)=\begin{pmatrix}
N_{3/2}(\xi,\gamma) - N_{1/2}(\xi,\gamma)\\
\vdots\\
N_{N+1/2}(\xi,\gamma) - N_{N-1/2}(\xi,\gamma)
\end{pmatrix}.$$
Hence, using the above notations, the layer-averaged Euler
system~\eqref{eq:massesvml}-\eqref{eq:mvtsvml} can be written under
the form
$$\int_{\R^2} {\cal K}(\xi,\gamma) \left( \frac{\partial M(\xi,\gamma)}{\partial t} + \begin{pmatrix} \xi\\ \gamma
    \end{pmatrix} . \nabla_{x,y} M(\xi,\gamma) - g\nabla_{x,y} z_b
. \nabla_{\xi,\gamma} M - N(\xi,\gamma)\right) d\xi d\gamma = 0.$$
\label{rem:kxi}
\end{remark}

\section{Numerical scheme}
\label{sec:NumMet}

The numerical scheme for the model~\eqref{eq:glo} proposed in this section extends the results
presented by some of the authors
in~\cite{bristeau,JSM_M2AN,JSM_JCP,JSM_entro}. Compared to these
previous results, it has the following advantages
\begin{itemize}
\item it gives a 3d approximation of the Navier-Stokes system whereas
  2d situations $(x,y)$ and $(x,z)$ where considered
  in~\cite{bristeau,JSM_M2AN,JSM_JCP},
\item the implicit treatment of the vertical exchanges terms gives a
  bounded CFL condition even when the water depth vanishes,
\item the kinetic interpretation, on which is based the numerical
  scheme, is also valid for the vertical exchange terms~-- that was not
  the case in~\cite{JSM_M2AN,JSM_JCP}~-- and allows to derive a robust and
  accurate numerical scheme,
\item the numerical approximation of the
system given in~\eqref{eq:glo} is endowed with strong stability
properties (well-balancing, positivity of the water depth,\ldots),
\item convergence curves towards a 3d non-stationary analytical
  solution with wet-dry interfaces have been obtained (see paragraph~\ref{subsubsec:bowl}).
\end{itemize}
First, we focus on the Euler part of the
system~\eqref{eq:glo} then in paragraph~\ref{subsec:NS_dis}, a
numerical scheme for the viscous terms is proposed.

Notice that, as a consequence of the layer-averaged discretization, the system~\eqref{eq:glo} and the
Boltzmann type equation~\eqref{eq:gibbsbis} are only 2d $(x,y)$ partial
differential equations with source terms. Hence, the spacial
approximation of the considered PDEs is performed on a 2d planar mesh.

\subsection{Semi-discrete (in time) scheme}

We consider discrete times $t^n$ with $t^{n+1}=t^n+\Delta t^n$. For
the time discretisation of the layer-averaged Navier-Stokes
system~\eqref{eq:glo}
we adopt the following scheme
\begin{equation}
U^{n+1} = U - \Delta t^n \left(\nabla_{x,y} . F(U) - S_b(U)\right) \\
+ \Delta t^n S_e^{n+1} + \Delta t^n S_{v,f}^{n+p},
\label{eq:glo_dis}
\end{equation}
where the superscript $^n$ has been
omitted and the integer $p=0,1/2,1$ will be precised below.

Using the expressions~\eqref{eq:nsml1_22}-\eqref{eq:nsml2_22} for the layer
averaged model, the semi-discrete in time scheme~\eqref{eq:glo_dis} writes
\begin{eqnarray}
h^{n+1} & = & h^{n+1/2}  = h - \Delta t^n\sum_{\alpha=1}^N  \ \nabla_{x,y} .   (h_\alpha  {\bf u} _\alpha), \label{eq:nsml1_d} \\
(h_{\alpha}{\bf u}_{\alpha})^{n+1/2} & = & h_{\alpha}{\bf u}_{\alpha} -
\Delta t^n \Bigl( \nabla_{x,y} .\left(
  h_{\alpha} {\bf u}_{\alpha} \otimes {\bf u}_{\alpha} \right) +
\nabla_{x,y}  \bigl(\frac{g}{2} h h_{\alpha} \bigr)  + gh_\alpha \nabla_{x,y} z_b\Bigr),\label{eq:nsml2_d}\\
(h_{\alpha}{\bf u}_{\alpha})^{n+1} & = & (h_{\alpha}{\bf u}_{\alpha})^{n+1/2} -
\Delta t^n \Bigl( {\bf u}_{\alpha+1/2}^{n+1} G_{\alpha+1/2} - {\bf
  u}_{\alpha-1/2}^{n+1} G_{\alpha-1/2} +\nabla_{x,y} .
\bigl( h_\alpha ^{n+p} {\bf \Sigma}_\alpha^{0,n+p} \bigr)\nonumber\\
& & +\Lambda_{\alpha+1/2} \frac{{\bf u}_{{\alpha}+1}^{n+p}-{\bf
    u}_{\alpha}^{n+p}}{h_{{\alpha}+1}^{n+p}+h_{\alpha}^{n+p}} - \Lambda_{\alpha-1/2}
\frac{{\bf u}_{\alpha}^{n+p}-{\bf
    u}_{{\alpha}-1}^{n+p}}{h_{\alpha}^{n+p}+h_{{\alpha}-1}^{n+p}} -\kappa_{\alpha}{\bf
  u}_{\alpha}^{n+p} + W_\alpha^{n+p} {\bf t}_s\Bigr),\label{eq:nsml3_d}\\
G_{\alpha+1/2} & = & -\sum_{j=1}^N \Bigl(\sum_{p=1}^\alpha l_p - \1_{j\leq \alpha}\Bigr)\nabla_{x,y} . (h_j {\bf u}_j), \label{eq:nsml4_d}
\end{eqnarray}
for ${\alpha}=1,\ldots,N$. The vertical
velocities $\{w_\alpha\}_{\alpha=1}^N$ are defined
by~\eqref{eq:def_w}. The first two equations\eqref{eq:nsml1_d}-\eqref{eq:nsml2_d} consist in an explicit
time scheme where the horizontal fluxes and the topography source term
are taken into account whereas in Eq.~\eqref{eq:nsml3_d} an implicit
treatment of the exchange terms between layers is proposed. The
implicit part of the scheme requires to solve a linear problem (see
lemma.~\ref{lem:inverse}) but, on the
contrary of previous work of some of the authors~\cite{JSM_M2AN}, it implies that the
CFL condition~\eqref{eq:CFLfull} no more depends on the exchange terms.

When $\nu=\kappa=0$ in Eq.~\eqref{eq:nsml3_d},
Eqs.~\eqref{eq:nsml1_d}-\eqref{eq:nsml3_d} correspond to the
layer-averaged of the Euler system. The choice $p=1$
(resp. $p=1/2$) in
Eq.~\eqref{eq:nsml3_d} corresponds to an implicit (resp. semi-implicit) treatment of the
viscous and friction terms whereas the choice $p=0$ implies an
explicit treatment and requires a CFL condition. Notice that the
advantages and limitations of an implicit or explicit discretization
in time scheme for the viscous and friction parts of Eq.~\eqref{eq:nsml3_d} are not detailed here.

\subsection{Space discretization}
\label{subsec:triangulation}

Let $\Omega$ denote the computational domain with boundary
$\Gamma$, which we assume is polygonal. Let $T_h$ be a triangulation
of $\Omega$ for which the vertices are denoted by $P_i$ with $S_i$ the
set of interior nodes and $G_i$ the set of boundary nodes.

For the space discretization of the
system\eqref{eq:nsml1_d}-\eqref{eq:nsml3_d}, we use a finite volume
technique for the Euler part~-- that is described below~-- and a
finite element approach~--$\mathbb{P}_1$ on $T_h$~-- for the
viscous part that is described in paragraph~\ref{subsec:NS_dis}.

\subsection{Finite volume formalism for the Euler part}
\label{subsec:fv}

In this paragraph and in paragraph~\eqref{subsec:discrete_kin}, we
propose a space discretization for the
model~\eqref{eq:nsml1_d}-\eqref{eq:nsml3_d} without the viscous and
friction terms i.e. the system
\begin{eqnarray}
h^{n+1} & = & h^{n+1/2}  = h - \Delta t^n\sum_{\alpha=1}^N  \nabla_{x,y} .   (h_\alpha  {\bf u} _\alpha), \label{eq:nsml11_d} \\
(h_{\alpha}{\bf u}_{\alpha})^{n+1/2} & = & h_{\alpha}{\bf u}_{\alpha} -
\Delta t^n \Bigl( \nabla_{x,y} .\left(
  h_{\alpha} {\bf u}_{\alpha} \otimes {\bf u}_{\alpha} \right) +
\nabla_{x,y}  \bigl(\frac{g}{2} h h_{\alpha} \bigr)  + gh_\alpha \nabla_{x,y} z_b\Bigr),\label{eq:nsml22_d}\\
(h_{\alpha}{\bf u}_{\alpha})^{n+1} & = & (h_{\alpha}{\bf u}_{\alpha})^{n+1/2} -
\Delta t^n \Bigl( {\bf u}_{\alpha+1/2}^{n+1} G_{\alpha+1/2} - {\bf
  u}_{\alpha-1/2}^{n+1} G_{\alpha-1/2} \Bigr),\label{eq:nsml33_d}
\end{eqnarray}
completed with~\eqref{eq:nsml4_d}.

We recall now the general formalism of finite volumes on unstructured
meshes.

The dual
cells $C_i$ are obtained by joining the centers of mass of the
triangles surrounding each vertex $P_i$. We use the following
notations (see Fig.~\ref{fig:mesh}):
\begin{itemize}
\item $K_i$, set of subscripts of nodes $P_j$ surrounding $P_i$,
\item $|C_i|$, area of $C_i$,
\item $\Gamma_{ij}$, boundary edge between the cells $C_i$ and $C_j$,
\item $L_{ij}$, length of $\Gamma_{ij}$,
\item ${\bf n}_{ij}$, unit normal to $\Gamma_{ij}$, outward to $C_i$ (${\bf
  n}_{ji}= -{\bf n}_{ij}$).
\end{itemize}
If $P_i$ is a node belonging to the boundary $\Gamma$, we join the centers of mass of the triangles adjacent to the
boundary to the middle of the edge belonging to $\Gamma$ (see Fig.~\ref{fig:mesh}) and we denote
\begin{itemize}
\item $\Gamma_i$, the two edges of $C_i$ belonging to $\Gamma$,
\item $L_i$, length of $\Gamma_i$ (for sake of simplicity we assume in
  the following that $L_i = 0$ if $P_i$ does not belong to $\Gamma$),
\item ${\bf n}_i$, the unit outward normal defined by averaging the two adjacent normals.
\end{itemize}

\begin{figure}[hbtp]
\begin{center}
\begin{tabular}{cc}
\includegraphics[height=3.5cm]{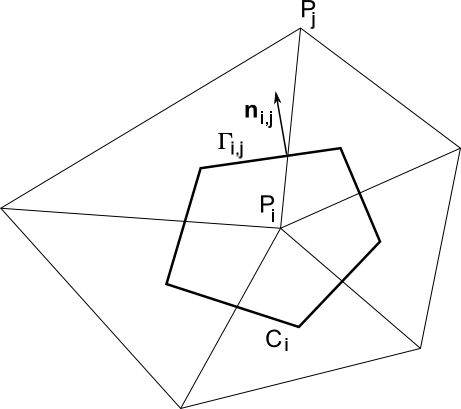}
\quad & \quad \includegraphics[height=3.5cm]{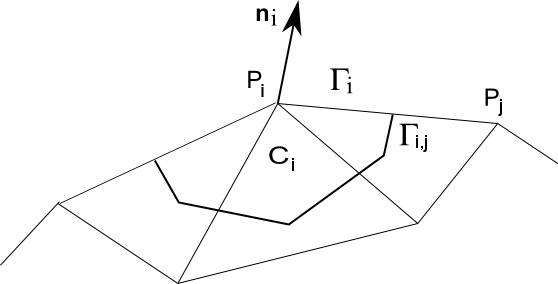} \\
{\it (a)} & {\it (b)}
\end{tabular}
\caption{{\it (a)}  Dual cell $C_i$ and {\it (b)} Boundary cell $C_i$.}
\label{fig:mesh}
\end{center}
\end{figure}

We define the piecewise constant functions $U^n(x,y)$ on cells $C_i$ corresponding to time $t^n$ and $z_b(x,y)$ as
\begin{equation}
	U^n(x,y)= U^n_i,\quad z_b(x,y)= z_{b,i},\quad\mbox{ for } (x,y)\in C_i,
	\label{eq:U^npc}
\end{equation}
with $U^n_i = (h^n_i,q^n_{x,1,i},\ldots,q^n_{x,N,i},q^n_{y,1,i},\ldots,q^n_{y,N,i})^T$ i.e.
$$U^n_i \approx \frac{1}{|C_i|}\int_{C_i} U(t^n,x,y)dxdy,\quad z_{b,i}
\approx \frac{1}{|C_i|}\int_{C_i} z_b(x,y)dxdy.$$
We will also use the notation
$$U^n_{\alpha,i} \approx \frac{1}{|C_i|}\int_{C_i}
U_\alpha(t^n,x,y)dxdy,$$
with $U_\alpha$ defined by~\eqref{eq:u_alpha}.
A finite volume scheme for solving the system~\eqref{eq:nsml11_d}-\eqref{eq:nsml22_d} is a formula of the form
\begin{equation}
	U^{n+1/2}_i=U_i- \sum_{j\in K_i} \sigma_{i,j}\F_{i,j} - \sigma_i\F_{e,i},
	\label{eq:upU0}
\end{equation}
where using the notations of~\eqref{eq:glo_dis}
\begin{equation}
\sum_{j\in K_i} L_{i,j} \F_{i,j} \approx \int_{C_i}
\nabla_{x,y} . F(U) dx dy,
\label{eq:flux_dis1}
\end{equation}
with
$$\sigma_{i,j} = \frac{\Delta t^nL_{i,j}}{|C_i|},\quad \sigma_{i} =
\frac{\Delta t^nL_{i}}{|C_i|}.$$
Here we consider first-order explicit schemes where
\begin{equation}
	\F_{i,j} = F(U_i,U_{j},z_{b,i}-z_{b,j},{\bf n}_{i,j}).
\label{eq:flux_def}
\end{equation}
and
\begin{equation}
\F_{i,j} = F(U_i,U_j,z_{b,i}-z_{b,j},{\bf n}_{i,j}) = \begin{pmatrix}
F(U_{1,i},U_{1,j},z_{b,i}-z_{b,j},{\bf n}_{i,j})\\
\vdots \\
F(U_{N,i},U_{N,j},z_{b,i}-z_{b,j},{\bf n}_{i,j})
\end{pmatrix}
\label{eq:flux}
\end{equation}
and for the boundary nodes
\begin{equation}
\F_{e,i} =
F(U_i,U_{e,i},{\bf n}_{i}) = \begin{pmatrix}
F(U_{1,i},U_{1,e,i},{\bf n}_{i})\\
\vdots \\
F(U_{N,i},U_{N,e,i},{\bf n}_{i})
\end{pmatrix}.
\label{eq:fluxbis}
\end{equation}
Relation~\eqref{eq:upU0} tells how to compute the
values $U^{n+1/2}_i$ knowing $U_i$ and discretized values $z_{b,i}$ of the
topography. Following~\eqref{eq:flux_dis1}, the term $\F_{i,j}$ in~\eqref{eq:upU0}
denotes an interpolation of the normal component of the flux
$F(U). {\bf n}_{i,j}$ along the edge $C_{i,j}$. The functions
$F(U_i,U_j,z_{b,i} - z_{b,j},{\bf n}_{i,j})\in \R^{2N+1}$ are the numerical fluxes, see~\cite{bouchut_book}.

In the next paragraph we
define $\F(U_i, U_j,z_{b,i}-z_{b,j}, {\bf n}_{i,j})$ using the kinetic
interpretation of the system. The computation of the value $U_{i,e}$,
which denotes a value outside $C_i$ (see Fig.~\ref{fig:mesh}-{\it (b)}), defined such that the boundary
conditions are satisfied, and the definition of the boundary flux
$F(U_i, U_{e,i}, {\bf n_i})$ are described paragraph~\ref{subsec:BC}. Notice that
we assume a flat topography on the boundaries i.e. $z_{b,i} = z_{b,i,e}$.

\subsection{Discrete kinetic equation}
\label{subsec:discrete_kin}

The choice of a kinetic scheme is motivated by several arguments. First, the kinetic
interpretation is a suitable starting point for building a stable numerical scheme.
 We will prove
in paragraph~\ref{subsec:discrete_kin} that the proposed kinetic scheme preserves  positivity
of the water depth and ensures
a discrete local maximum principle for a tracer concentration
(temperature, salinity...).
Second, the construction of the kinetic scheme does not need the computation of
the system eigenvalues.
 This point is very important here since these eigenvalues are not available
 in explicit analytical form, and they are hardly accessible even numerically.
 Furthermore, as previously mentioned,  hyperbolicity of the multilayer
 model may not hold, and  the kinetic scheme allows overcoming this difficulty.

\subsubsection{Without topography}

In a first step we consider a situation with flat
bottom. Following prop.~\ref{lemma:sv_kin_rep}, the
model~\eqref{eq:massesvml}-\eqref{eq:mvtsvml} reduces, for each layer, to a classical Saint-Venant system with
exchange terms and its
kinetic interpretation (see Eq.~\eqref{eq:gibbsbis}) is given by
\begin{equation}
\frac{\partial M_\alpha}{\partial t} + \begin{pmatrix} \xi\\ \gamma
    \end{pmatrix} . \nabla_{x,y} M_\alpha 
 - N_{\alpha+1/2} + N_{\alpha-1/2}
 = Q_{\alpha},\qquad \alpha\in\{1,\ldots,N\},
\label{eq:eq_kin_di}
\end{equation}
with the notations defined in paragraph~\ref{subsec:kin_description}.

Let $C_i$ be a cell, see Fig.~\ref{fig:mesh}. The integral over
$C_i$ of the convective part of the kinetic equation~\eqref{eq:eq_kin_di} gives
\begin{equation}
\int_{C_i} \left(\frac{\partial M_\alpha}{\partial t} + \begin{pmatrix} \xi\\ \gamma
    \end{pmatrix} . \nabla_{x,y} M_\alpha \right) dx dy \approx
|C_i| \frac{\partial M_{\alpha,i}}{\partial t} + \sum_{j\in K_i} \int_{\Gamma_{i,j}} M_{\alpha,i,j} dl,
\label{eq:kin_dis1}
\end{equation}
with $M_{\alpha,i} = M(U_{\alpha,i},\xi,\gamma)$,
${\bf n}_{i,j}$ being the outward normal to the cell $C_i$.
The quantity $M_{\alpha,i,j}$ is defined by the classical kinetic upwinding
\begin{equation*}
M_{\alpha,i,j}
= M_{\alpha,i}\zeta_{i,j}
\1_{\zeta_{i,j}\geq 0}  + M_{\alpha,j} \zeta_{i,j}
\1_{\zeta_{i,j}\leq 0},
\label{eq:kin_dis22}
\end{equation*}
with $\zeta_{i,j} = \begin{pmatrix} \xi & \gamma \end{pmatrix}^T
. {\bf n}_{i,j}$.

Therefore, the kinetic scheme applied for Eq.\eqref{eq:eq_kin_di} is given by
\begin{eqnarray}
f_{\alpha,i}^{n+1/2-}
& = & \Bigl( 1 - \frac{\Delta t^n}{|C_i|} \sum_{j\in K_i} L_{i,j} \zeta_{i,j}
\1_{\zeta_{i,j}\geq 0} \Bigr) M_{\alpha,i} - \frac{\Delta t^n}{|C_i|} \sum_{j\in K_i} L_{i,j} M_{\alpha,j} \zeta_{i,j}
\1_{\zeta_{i,j}\leq 0},
\label{eq:kin_dis3}\\
f_{\alpha,i}^{n+1-} & = & f_{\alpha,i}^{n+1/2-}  +\Delta t^n \Bigl( N_{\alpha+1/2,i}^{n+1-} - N_{\alpha-1/2,i}^{n+1-} \Bigr),
\label{eq:kin_dis3bis}
\end{eqnarray}
with the exchange terms $\{N_{\alpha+1/2,i}^{n+1-}\}_{\alpha=0}^{N}$
defined by
\begin{equation}
N_{\alpha+1/2,i}^{n+1-} (\xi,\gamma)
= \frac{G_{\alpha+1/2,i}}{h_i}f_{\alpha+1/2,i}^{n+1-}.
\label{eq:Ndis}
\end{equation}
Following~\eqref{eq:upwind_uT} we can write
$$f_{\alpha+1/2,i}^{n+1-} =
\left\{\begin{array}{ll}
f_{\alpha,i}^{n+1-}  & \mbox{\rm if } \;G_{\alpha+1/2} \leq 0\\
f_{\alpha+1,i}^{n+1-}  & \mbox{\rm if } \;G_{\alpha+1/2} > 0
\end{array}\right.$$
leading to
$$N_{\alpha+1/2,i}^{n+1-} (\xi,\gamma)
= \frac{|G_{\alpha+1/2,i}|_+}{h_i}f_{\alpha+1,i}^{n+1-} -
\frac{|G_{\alpha+1/2,i}|_-}{h_i}f_{\alpha,i}^{n+1-}.$$
Notice that the previous definition is consistent with~\eqref{eq:Nbis}.
From~\eqref{eq:Qalphabis_kin}, we get
\begin{equation*}
G_{\alpha+1/2,i}=-\frac{1}{|C_i|}\sum_{k=1}^N \Bigl(\sum_{p=1}^\alpha
l_p - \1_{k\leq \alpha}\Bigr)
\sum_{j\in K_i} L_{i,j} \int_{\R^2}\left( M_{k,i}\zeta_{i,j}
\1_{\zeta_{i,j}\geq 0} + M_{k,j}\zeta_{i,j}
\1_{\zeta_{i,j}\leq 0}\right) d\xi d\gamma.
\end{equation*}

By analogy with the computations in~\eqref{eq:kinmom}, we
can recover the macroscopic quantities $U_{\alpha,i}^{n+1}$ at time $t^{n+1}$ by
integration of the relation~\eqref{eq:kin_dis3bis}
\begin{equation}
U_{\alpha,i}^{n+1} = \int_{\R^2} \kxi f_{\alpha,i}^{n+1-} d\xi d\gamma.
\label{eq:update}
\end{equation}

The scheme~\eqref{eq:kin_dis3} and the definition~\eqref{eq:update} allow to complete the definition of the macroscopic scheme~\eqref{eq:upU0},\eqref{eq:flux}, \eqref{eq:fluxbis} with the numerical flux given
by the flux vector splitting formula~\cite{bouchut_BGK}
\begin{eqnarray}
\F_{i,j} & = &  F^+(U_i^n,{\bf n}_{i,j}) + F^-(U_{j}^n,{\bf n}_{i,j}) \nonumber\\
& = & \int_{\R^2} {\cal K} (\xi,\gamma) M_i \zeta_{i,j}\1_{\zeta_{i,j}\geq 0} d\xi
d\gamma +  \int_{\R^2} {\cal K} (\xi,\gamma) M_j \zeta_{i,j}\1_{\zeta_{i,j}\leq 0} d\xi
d\gamma,
\label{eq:flux_split}
\end{eqnarray}
where ${\cal K} (\xi,\gamma)$ is defined in Remark~\ref{rem:kxi} and $M_i = (M_{1,i}, \ldots, M_{N,i})^T$.

Using~\eqref{eq:Ndis}, we rewrite the step~\eqref{eq:kin_dis3bis}
under the form
$$\bigl( {\bf I}_N + \Delta t {\bf G}_{N,i} \bigr) f^{n+1-} = f^{n+1/2-},$$
where $I_N$ is the identity matrix of
size $N$ and ${\bf G}_{N,i}$ is defined by
$$G_{N,i} = \begin{pmatrix}
-\frac{|G_{3/2,i}|_-}{h_{1,i}^{n+1}} &
-\frac{|G_{3/2,i}|_+}{h_{1,i}^{n+1}} & 0 & 0 & \cdots  & 0\\
\frac{|G_{3/2,i}|_-}{h_{2,i}^{n+1}} & \ddots & \ddots & 0 & \cdots &
0\\
0 & \ddots & \ddots & \ddots & 0 & 0\\
\vdots  & 0 & \frac{|G_{\alpha-1/2,i}|_-}{h_{\alpha,i}^{n+1}} &
-\frac{|G_{\alpha+1/2,i}|_- -
    |G_{\alpha-1/2,i}|_+}{h_{\alpha,i}^{n+1}}
& -\frac{|G_{\alpha+1/2,i}|_+}{h_{\alpha,i}^{n+1}} & 0 \\
\vdots & \ddots & 0 & \ddots & \ddots & -\frac{|G_{N-1/2,i}|_+}{h_{N-1,i}^{n+1}}\\
0 & \cdots & 0 & 0 & \frac{
    |G_{N-1/2,i}|_-}{h_{N,i}^{n+1}} & \frac{
    |G_{N-1/2,i}|_+}{h_{N,i}^{n+1}}
\end{pmatrix}.$$
Hence, the resolution of the discrete kinetic
equation~\eqref{eq:kin_dis3bis} requires to inverse the matrix
$$\begin{pmatrix}
{\bf I}_{N} + \Delta t {\bf G}_{N,i} & 0\\
0 & {\bf I}_{N} + \Delta t {\bf G}_{N,i}
\end{pmatrix}$$
and we have the following lemma.
\begin{lemma}
The matrix ${\bf I}_N + \Delta t {\bf G}_{N,i}$
\begin{itemize}
\item[{\it (i)}] is invertible for any $h_i^{n+1}>0$,
\item[{\it (ii)}] $({\bf I}_N + \Delta t {\bf G}_{N,i})^{-1}$ has only
  positive coefficients,
\item[{\it (iii)}] for any vector $T$ with non negative entries
  i.e. $T_\alpha \geq 0$, for $1\leq \alpha\leq N$, one has
$$\| ({\bf I}_N + \Delta t {\bf G}_{N,i})^{-t} T \|_\infty \leq
\|T\|_\infty.$$
\end{itemize}
\label{lem:inverse}
\end{lemma}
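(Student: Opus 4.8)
The plan is to read off the algebraic structure of $A := \mathbf{I}_N + \Delta t\,\mathbf{G}_{N,i}$ from the update \eqref{eq:kin_dis3bis}--\eqref{eq:Ndis} and then invoke the theory of M-matrices. Substituting the upwind choice \eqref{eq:upwind_uT} into \eqref{eq:Ndis} writes each exchange term $N_{\alpha+1/2,i}^{n+1-}$ as $\tfrac{|G_{\alpha+1/2,i}|_+}{h_i}f_{\alpha+1}-\tfrac{|G_{\alpha+1/2,i}|_-}{h_i}f_\alpha$, so that $\mathbf{G}_{N,i}$ is tridiagonal. The first structural fact I would record is that, once the signs are tracked carefully, $A$ is a Z-matrix with diagonal entries $\ge 1$: its off-diagonal entries are $\le 0$ (each is $-\Delta t$ times a nonnegative multiple of some $|G_{\beta+1/2,i}|_\pm/h_i$), while its diagonal entries are $1$ plus a nonnegative quantity. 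Getting these signs right and keeping track of which $|G|_\pm$ sits in which entry is the delicate part of the argument, and I expect it to be the main obstacle; everything afterwards is standard.

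The second structural fact is conservation: the columns of $\mathbf{G}_{N,i}$ sum to zero. This is the discrete counterpart of the statement that $\sum_\alpha f_\alpha$ is unchanged by the exchange step, which holds because the exchanges telescope and the boundary fluxes vanish, $G_{1/2}=G_{N+1/2}=0$, by \eqref{eq:G_boundary}. Hence $\mathbf{1}^T A = \mathbf{1}^T$, i.e.\ every column of $A$ sums to $1$. Combined with the Z-matrix property this gives $A_{\beta\beta}=1+\sum_{\alpha\ne\beta}|A_{\alpha\beta}|$, so $A$ is strictly column-diagonally dominant with dominance gap exactly $1$. Strict diagonal dominance immediately yields nonsingularity (Levy--Desplanques applied to $A^T$), proving {\it (i)}.

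For {\it (ii)} I would use the Jacobi splitting $A = D - L$, with $D$ the diagonal of $A$ (entries $\ge 1 > 0$) and $L\ge 0$ collecting the negated off-diagonal part. Then $D^{-1}L\ge 0$, and strict column dominance forces $\rho(D^{-1}L)<1$; the Neumann series $(\mathbf{I}-D^{-1}L)^{-1}=\sum_{k\ge0}(D^{-1}L)^k$ converges to a nonnegative matrix, whence $A^{-1}=(\mathbf{I}-D^{-1}L)^{-1}D^{-1}\ge 0$. Equivalently, one simply invokes that a strictly diagonally dominant Z-matrix with positive diagonal is a nonsingular M-matrix.

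Finally, {\it (iii)} follows from {\it (ii)} together with the conservation identity. Transposing $\mathbf{1}^T A=\mathbf{1}^T$ gives $A^T\mathbf{1}=\mathbf{1}$, hence $P:=A^{-t}=(A^T)^{-1}$ satisfies $P\mathbf{1}=\mathbf{1}$, while $P=(A^{-1})^T\ge 0$ by {\it (ii)}. Thus $P$ is row-stochastic, and for any $T$ (in particular $T\ge0$) one has $|(PT)_\alpha|\le\sum_\beta P_{\alpha\beta}|T_\beta|\le\|T\|_\infty\sum_\beta P_{\alpha\beta}=\|T\|_\infty$, i.e.\ $\|A^{-t}T\|_\infty\le\|T\|_\infty$. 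The only genuinely technical step is the first one, namely correctly identifying the entries and signs of $\mathbf{G}_{N,i}$ so that the Z-matrix and zero-column-sum properties both hold; the M-matrix and stochasticity arguments are then routine.
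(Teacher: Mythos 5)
Your proof is correct and follows essentially the same route as the paper's: strict diagonal dominance for {\it (i)}, the Jacobi splitting with a Neumann series for {\it (ii)}, and the column-sum identity $A^{t}\mathbf{1}=\mathbf{1}$ combined with nonnegativity of $A^{-t}$ for {\it (iii)}. If anything you are slightly more careful than the paper, which only asserts that the entries of the iteration matrix are nonnegative and less than $1$, whereas your column-sum (strict column dominance) argument actually delivers $\rho(D^{-1}L)<1$ and hence convergence of the Neumann series.
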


\begin{proof}[Proof of lemma~\ref{lem:inverse}]
{\it (i)} For any $h_i^{n+1}>0$, the matrix ${\bf I}_N + \Delta t {\bf G}_{N,i}$
is a strictly dominant diagonal matrix and hence it is invertible.

{\it (ii)} Denoting ${\bf G}_{N,i}^d$ (resp. ${\bf G}_{N,i}^{nd}$) the diagonal
(resp. non diagonal) part of ${\bf G}_{N,i}$ we can write
$${\bf I}_N + \Delta t {\bf G}_{N,i} = ({\bf I}_N + \Delta t {\bf
  G}_{N,i}^d) \left( {\bf I}_N - ({\bf I}_N + \Delta t {\bf
    G}_{N,i}^d)^{-1} (-\Delta t G_{N,i}^{nd})\right),$$
where all the entries of the matrix ${\bf J}_{N,i} = ({\bf I}_N + \Delta t {\bf
    G}_{N,i}^d)^{-1} (-\Delta t {\bf G}_{N,i}^{nd})$,
are non negative and less than 1. And hence, we can write
$$({\bf I}_N + \Delta t {\bf G}_{N,i})^{-1} = \sum_{k=0}^\infty J_{N,i}^k,$$
proving all the entries of $({\bf I}_N + \Delta t {\bf G}_{N,i})^{-1}$
are non negative.

{\it (ii)} Let us consider the vector ${\bf 1}$ whose entries are all
equal to 1. Since we have
$$({\bf I}_N + \Delta t {\bf G}_{N,i})^t {\bf 1} = {\bf 1},$$
we also have ${\bf 1} = ({\bf I}_N + \Delta t {\bf G}_{N,i})^{-t} {\bf 1}$.
Now let $T$ be a vector whose entries $\{T_\alpha\}_{1 \leq
  \alpha \leq N}$ are non negative, then
$$({\bf I}_N + \Delta t {\bf G}_{N,i})^{-t} {\bf T} \leq ({\bf I}_N +
\Delta t {\bf G}_{N,i})^{-t} {\bf 1} \|{\bf T}\|_\infty = {\bf 1}
\|{\bf T}\|_\infty,$$
that completes the proof.
\end{proof}

\subsubsection{With topography}

The hydrostatic reconstruction scheme (HR scheme for short) for the Saint-Venant system
has been introduced in \cite{bristeau1} in the 1d case
and described in 2d for unstructured meshes in~\cite{bristeau}. The HR
in the context of the kinetic description for the Saint-Venant system
has been studied in~\cite{JSM_entro}.

In order to take into account the topography source and to preserve
relevant equilibria, the HR leads to a modified version
of~\eqref{eq:upU0} under the form
\begin{equation}
	U^{n+1/2}_i=U^n_i- \sum_{j\in K_i} \sigma_{i,j}\F_{i,j}^* -
        \sigma_i\F_{i,e} + \sum_{j\in K_i} \sigma_{i,j} {\cal S}_{i,j}^*,
	\label{eq:upU0_HR}
\end{equation}
where
\begin{equation}
\F^*_{i,j} = F(U_{i,j}^*,U_{j,i}^*,{\bf n}_{i,j}),\quad \mathcal{S}_{i,j}^* =
S(U_i,U_{i,j}^*,{\bf n}_{i,j}) = \begin{pmatrix}
0\\
\frac{g}{2}l_1 (h_{i,j}^{*2} - h_i^2) {\bf n}_{i,j}\\
\vdots\\
\frac{g}{2}l_N (h_{i,j}^{*2} - h_i^2) {\bf n}_{i,j}
\end{pmatrix},
\label{eq:flux_HR}
\end{equation}
with
\begin{equation}
\begin{array}{l}
z_{b,i,j}^* = \max(z_{b,i},z_{b,j}), \quad h_{i,j}^* =
\max(h_i+z_{b,i}-z_{b,i,j}^*,0),\\
U_{i,j}^* = (h_{i,j}^*,l_1 h_{i,j}^* u_{1,i},\ldots, l_N h_{i,j}^* u_{N,i}, l_1 h_{i,j}^* v_{1,i},\ldots, l_N h_{i,j}^* v_{N,i})^T.
\end{array}
\label{eq:state_HR}
\end{equation}

We would like here to propose a kinetic interpretation of the HR scheme, which means to
interpret the above numerical fluxes as averages with respect to the kinetic variables
of a scheme written on a kinetic function $f$.
More precisely, we would like to approximate the solution to \eqref{eq:gibbsbis} by a kinetic scheme
such that the associated macroscopic scheme is exactly \eqref{eq:upU0_HR}-\eqref{eq:flux_HR}
with homogeneous numerical flux $\F$ given by~\eqref{eq:flux_split}.
We denote $M_{\alpha,i,j}^*=M(U_{\alpha,i,j}^*,\xi,\gamma)$ for any $\alpha=1,\ldots,N$
and we consider the scheme
\begin{eqnarray}
f_{\alpha,i}^{n+1/2-} & = & M_{\alpha,i} - \frac{\Delta t^n}{|C_i|} \sum_{j\in K_i} L_{i,j} \zeta_{i,j}
\1_{\zeta_{i,j}\geq 0}  M_{\alpha,i,j}^* - \frac{\Delta t^n}{|C_i|} \sum_{j\in K_i} L_{i,j} M_{\alpha,j,i}^* \zeta_{i,j}
\1_{\zeta_{i,j}\leq 0},\nonumber\\
& & - \frac{\Delta t^n}{|C_i|} \sum_{j\in K_i} L_{i,j} (M_{\alpha,i}-M_{\alpha,i,j}^*) \theta_{\alpha,i,j},
\label{eq:kin_dis3_HR1}\\
f_{\alpha,i}^{n+1-} & = & f_{\alpha,i}^{n+1/2-} +\Delta t^n \Bigl( N_{\alpha+1/2,i}^{*,n+1-} - N_{\alpha-1/2,i}^{*,n+1-} \Bigr),
\label{eq:kin_dis3_HR2}
\end{eqnarray}
where
$$\theta_{\alpha,i,j} = \begin{pmatrix}
\xi - u_{\alpha,i}\\
\gamma - v_{\alpha,i}
\end{pmatrix}. {\bf n}_{i,j}.$$
For the exchange terms, by analogy with~\eqref{eq:Ndis} we define
\begin{equation}
N_{\alpha+1/2,i}^{*,n+1-} (\xi,\gamma)
= \frac{G_{\alpha+1/2,i}^*}{h_i}f_{\alpha+1/2,i}^{n+1-},
\label{eq:Ndis_HR}
\end{equation}
and using~\eqref{eq:Qalphabis_kin} we get
$$
G_{\alpha+1/2,i}^* =-\frac{1}{|C_i|}\sum_{k=1}^N \Bigl(\sum_{p=1}^\alpha
l_p - \1_{k\leq \alpha}\Bigr)
\sum_{j\in K_i} L_{i,j} \int_{\R^2}\left( M_{k,i,j}^*\zeta_{i,j}
\1_{\zeta_{i,j}\geq 0} + M_{k,i,j}^*\zeta_{i,j}
\1_{\zeta_{i,j}\leq 0}\right) d\xi d\gamma.
$$
It is easy to see that in the previous formula, we have the moment relations
\begin{eqnarray}
& & 	\int_{\R^2} (M_{\alpha,i}-M_{\alpha,i,j}^*) \theta_{\alpha,i,j} d\xi d\gamma =0,\label{eq:intdeltaM1-}\\
& & \int_{\R^2} \begin{pmatrix}
\xi \\
\gamma
\end{pmatrix} (M_{\alpha,i}-M_{\alpha,i,j}^*) \theta_{\alpha,i,j} d\xi
d\gamma = \frac{g}{2}l_\alpha (h_{i,j}^{*2} - h_i^2) {\bf n}_{i,j},
\label{eq:intdeltaM2-}
\end{eqnarray}
Using again~\eqref{eq:update},
the integration of the set of
equations~\eqref{eq:kin_dis3_HR1}-\eqref{eq:kin_dis3_HR2}, for $\alpha=1,\ldots,N$,
multiplied by ${\cal K}(\xi,\gamma)$
with respect to $\xi$,$\gamma$ then gives the HR scheme \eqref{eq:upU0_HR}-\eqref{eq:flux_HR}
with~\eqref{eq:flux_split},\eqref{eq:state_HR}.
Thus as announced, \eqref{eq:kin_dis3_HR1}-\eqref{eq:kin_dis3_HR2} is a kinetic interpretation of
the HR scheme in 3d for an unstructured mesh.

There exists a velocity $v_m\geq 0$ such that for all $\alpha,i$,
\begin{equation}
	|\xi|\geq  v_m\
        \mbox{or } |\gamma|\geq  v_m  \Rightarrow M(U_{\alpha,i},\xi,\gamma)=0.
	\label{eq:vmax}
\end{equation}
This means equivalently that $|u_{\alpha,i}|+|v_{\alpha,i}|+\sqrt{2gh_i}\leq v_m$.
We consider a CFL condition strictly less than one,
\begin{equation}
	\sigma_{i} v_m\leq
        \beta < \frac{1}{2} \quad \mbox{ for all }i,
	\label{eq:CFLfull}
\end{equation}
where $\sigma_{i}=\Delta t^n \sum_{j\in K_i} L_{i,j}/|C_i|$, and $\beta$ is a given
constant.

Then the following
proposition holds.
\begin{proposition}
Under the CFL condition~\eqref{eq:CFLfull}, the scheme~\eqref{eq:kin_dis3_HR1}-\eqref{eq:kin_dis3_HR2} verifies the following properties.

\noindent (i) The macroscopic scheme derived
from~\eqref{eq:kin_dis3_HR1}-\eqref{eq:kin_dis3_HR2} using~\eqref{eq:update} is a consistent
discretization of the layer-averaged Euler system~\eqref{eq:massesvml}-\eqref{eq:mvtsvml}.

\noindent (ii) The kinetic function remains nonnegative i.e.
$$f^{n+1-}_{\alpha,i}\geq
0, \qquad \forall (\xi,\gamma)\in\R^2,\;\forall i,\ \forall \alpha.$$

\noindent (iii) The scheme~\eqref{eq:kin_dis3_HR1}-\eqref{eq:kin_dis3_HR2} is kinetic well balanced i.e. at rest
\begin{equation}
f_{\alpha,i}^{n+1-}  =  M_{\alpha,i},\quad \forall
(\xi,\gamma)\in\R^2,\;\forall i,\ \forall \alpha=1,\ldots,N.
\label{eq:rest}
\end{equation}
\label{prop:dis_kin_HR}
\end{proposition}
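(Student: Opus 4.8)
The plan is to verify the three properties in turn, exploiting the kinetic structure of the scheme~\eqref{eq:kin_dis3_HR1}-\eqref{eq:kin_dis3_HR2} together with the moment relations established earlier. For \textbf{(i)} consistency, I would integrate~\eqref{eq:kin_dis3_HR1}-\eqref{eq:kin_dis3_HR2} against ${\cal K}(\xi,\gamma)$ over $\R^2$ and use~\eqref{eq:update} to pass from the kinetic unknowns back to the macroscopic ones. The moment relations~\eqref{eq:kinmom} reproduce $U_\alpha$ and the flux $F(U)$, the vanishing relations~\eqref{eq:intdeltaM1-}-\eqref{eq:intdeltaM2-} turn the correction term weighted by $\theta_{\alpha,i,j}$ into exactly the hydrostatic source ${\cal S}^*_{i,j}$ of~\eqref{eq:flux_HR}, and~\eqref{eq:exchange_kin} applied to $N^{*,n+1-}_{\alpha\pm1/2,i}$ reproduces the exchange terms $u_{\alpha\pm1/2}G_{\alpha\pm1/2}$. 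This is precisely the computation already sketched in the paragraph following~\eqref{eq:intdeltaM2-}, so consistency amounts to reading off that the resulting macroscopic scheme is~\eqref{eq:upU0_HR}-\eqref{eq:flux_HR}, which is itself a consistent discretization of~\eqref{eq:massesvml}-\eqref{eq:mvtsvml}; one then invokes the consistency of the HR fluxes and of the kinetic flux-vector splitting~\eqref{eq:flux_split}.

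For \textbf{(ii)} positivity, I would argue in two stages matching the two half-steps. In the convective step~\eqref{eq:kin_dis3_HR1}, I group the terms so that $f^{n+1/2-}_{\alpha,i}$ is written as a combination of $M_{\alpha,i}$, $M^*_{\alpha,i,j}$ and $M^*_{\alpha,j,i}$; since the Maxwellians and the reconstructed Maxwellians are all nonnegative and the upwind indicators $\1_{\zeta_{i,j}\geq0}$, $\1_{\zeta_{i,j}\leq0}$ isolate the correct sign of $\zeta_{i,j}$, the only danger is the diagonal coefficient multiplying $M_{\alpha,i}$. The CFL condition~\eqref{eq:CFLfull} with $\beta<\tfrac12$ together with the support bound~\eqref{eq:vmax} on $M_{\alpha,i}$ (which caps $|\zeta_{i,j}|$ by $v_m$ on the support) guarantees this diagonal coefficient stays nonnegative, so $f^{n+1/2-}_{\alpha,i}\geq0$. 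For the implicit exchange step~\eqref{eq:kin_dis3_HR2}, I rewrite it as $({\bf I}_N+\Delta t\,{\bf G}_{N,i})f^{n+1-}=f^{n+1/2-}$ and invoke Lemma~\ref{lem:inverse}\emph{(ii)}: the inverse has only nonnegative entries, so a nonnegative right-hand side yields a nonnegative $f^{n+1-}_{\alpha,i}$ for every $(\xi,\gamma)$.

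For \textbf{(iii)} well-balancing, I would specialize to the lake-at-rest state, ${\bf u}_{\alpha,i}=0$ and $h_i+z_{b,i}$ constant. Then $\theta_{\alpha,i,j}=(\xi,\gamma)^T.{\bf n}_{i,j}=\zeta_{i,j}$, and the hydrostatic reconstruction gives $h^*_{i,j}=h_i+z_{b,i}-z^*_{b,i,j}$ equal on both sides of each edge, so $M^*_{\alpha,i,j}=M^*_{\alpha,j,i}$. Collecting the three convective contributions in~\eqref{eq:kin_dis3_HR1}, the terms carrying $M^*_{\alpha,i,j}$ cancel against the correction term by the identity $\zeta_{i,j}\1_{\zeta_{i,j}\geq0}+\zeta_{i,j}\1_{\zeta_{i,j}\leq0}=\zeta_{i,j}$, leaving $f^{n+1/2-}_{\alpha,i}=M_{\alpha,i}$. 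Finally at rest all the $G_{\alpha+1/2,i}$ vanish (each involves $\nabla_{x,y}.(h_j{\bf u}_j)$, hence a discrete divergence of a zero flux), so ${\bf G}_{N,i}=0$ and the implicit step leaves $f^{n+1-}_{\alpha,i}=M_{\alpha,i}$, which is~\eqref{eq:rest}. I expect the main obstacle to be the positivity step~(ii): one must track the boundary-edge contribution $\sigma_i{\cal F}_{i,e}$ (absorbed via the ghost state $U_{e,i}$) and verify carefully that the sum of edge coefficients respects the CFL bound~\eqref{eq:CFLfull}, since the factor $\tfrac12$ rather than $1$ is what leaves room for the hydrostatic correction term without destroying nonnegativity.
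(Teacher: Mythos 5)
Your argument is correct and, for parts (i) and (ii), essentially coincides with the paper's proof: the paper disposes of (i) in one line by observing that \eqref{eq:kin_dis3_HR1}-\eqref{eq:kin_dis3_HR2} is a consistent discretization of the transport equation \eqref{eq:gibbsbis} and invoking Prop.~\ref{prop:kinetic_sv_mcl}, while your moment-integration route is exactly the computation the paper carries out just before the proposition; for (ii) the paper, like you, pushes everything onto the coefficient of $M_{\alpha,i}$ and uses $\beta<1/2$ in \eqref{eq:CFLfull} together with \eqref{eq:vmax} and Lemma~\ref{lem:inverse}. The one ingredient you gloss over in (ii) is the pointwise domination $0\leq M^*_{\alpha,i,j}\leq M_{\alpha,i}$ supplied by the hydrostatic reconstruction ($h^*_{i,j}\leq h_i$ in \eqref{eq:state_HR}): in your grouping the coefficient multiplying $M^*_{\alpha,i,j}$ is proportional to $\theta_{\alpha,i,j}-\zeta_{i,j}\1_{\zeta_{i,j}\geq 0}$, which has no definite sign, and it is precisely this comparison that lets the correction term be absorbed into the diagonal, consuming the second half of the CFL budget $2\sigma_i v_m\leq 2\beta<1$. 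Where you genuinely depart from the paper is (iii): the printed proof asserts that at rest $M_{\alpha,i}=M^*_{\alpha,i,j}$ for all $i,j$ and stops, but that identity holds only when $z_{b,j}\leq z_{b,i}$; over a non-flat bottom $h^*_{i,j}=\eta-\max(z_{b,i},z_{b,j})$ is in general strictly smaller than $h_i$. Your route~-- $M^*_{\alpha,i,j}=M^*_{\alpha,j,i}$ at rest, cancellation of the upwind terms against the $\theta_{\alpha,i,j}$-correction via $\zeta_{i,j}\1_{\zeta_{i,j}\geq0}+\zeta_{i,j}\1_{\zeta_{i,j}\leq0}=\zeta_{i,j}$, the geometric identity $\sum_{j\in K_i}L_{i,j}{\bf n}_{i,j}=0$ to eliminate the residual $\zeta_{i,j}M_{\alpha,i}$ term, and $G^*_{\alpha+1/2,i}=0$ for the implicit step~-- is the correct general well-balancing argument and strictly improves on the paper's. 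Your closing caveat is also well taken: both the positivity bound and the identity $\sum_j L_{i,j}{\bf n}_{i,j}=0$ require the boundary contribution $\sigma_i\F_{e,i}$ to be accounted for, which the paper's proof silently omits.
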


\begin{proof}[Proof of prop.~\ref{prop:dis_kin_HR}]

\noindent (i) Since the Boltzmann type equations~\eqref{eq:gibbsbis}
are almost linear transport equations with source terms, the discrete kinetic scheme~\eqref{eq:kin_dis3_HR1}-\eqref{eq:kin_dis3_HR2} is
clearly a consistent discretization of~\eqref{eq:gibbsbis}. And therefore
using the kinetic interpretation given in prop.~\ref{prop:kinetic_sv_mcl}, the macroscopic scheme obtained
from~\eqref{eq:kin_dis3_HR1}-\eqref{eq:kin_dis3_HR2} using~\eqref{eq:update} is a consistent
discretization of the layer-averaged Euler system~\eqref{eq:massesvml}-\eqref{eq:mvtsvml}.

\noindent (ii) In~\eqref{eq:kin_dis3_HR1}-\eqref{eq:kin_dis3_HR2} we have
$$\frac{\Delta t^n}{|C_i|} \sum_{j\in K_i} L_{i,j} M_{\alpha,j,i}^* \zeta_{i,j}
\1_{\zeta_{i,j}\leq 0} \leq 0,$$
and the HR~\eqref{eq:state_HR} ensures $M_{\alpha,i,j}^* \leq M_{\alpha,i}$, $\forall
(\xi,\gamma)\in\R^2,\forall \alpha$
leading to
$$
f_{\alpha,i}^{n+1/2-} \geq \left( 1 - \frac{\Delta t^n}{|C_i|}
  \sum_{j\in K_i} L_{i,j} \bigl( \zeta_{i,j}
\1_{\zeta_{i,j}\geq 0} + \theta_{\alpha,i,j}
\1_{\theta_{\alpha,i,j}\geq 0}\bigr)\right) M_{\alpha,i},\nonumber
$$
But $\zeta_{i,j}\1_{\zeta_{i,j}\geq 0} \leq \max\{|\xi|,|\gamma|\}$,
$\theta_{\alpha,i,j}\1_{\theta_{\alpha,i,j}\geq 0} \leq
\max\{|\xi - u_{\alpha,i}|,|\gamma - v_{\alpha,i}|\}$
and therefore
$$
\frac{\Delta t^n}{|C_i|}\sum_{j\in K_i} L_{i,j} \bigl( \zeta_{i,j}
\1_{\zeta_{i,j}\geq 0} + \theta_{\alpha,i,j}
\1_{\theta_{\alpha,i,j}\geq 0}\bigr)
\leq \sigma_i
(\max\{|\xi|,|\gamma|\} + \max\{|\xi -
u_{\alpha,i}|,|\gamma - v_{\alpha,i}|\} )
\leq 1,$$
where~\eqref{eq:vmax},\eqref{eq:CFLfull} have been used, proving
$f_{\alpha,i}^{n+1/2-} \geq 0$ for any $\xi\in\R$ and any
$\alpha\in\{1,\ldots,N\}$. Now using the results of lemma~\ref{lem:inverse}, it
ensures that $f_{\alpha,i}^{n+1-}$ defined by~\eqref{eq:kin_dis3_HR2} satisfies
$f_{\alpha,i}^{n+1-} \geq 0$ for any $(\xi,\gamma)\in\R^2$ and any
$\alpha\in\{1,\ldots,N\}$, proving {\it (ii)}.

\noindent (iii) Considering the situation at rest
i.e. $u_{\alpha,i}=v_{\alpha,i}=0$, $\forall \alpha,i$ and
$h_i+z_{b,i}=h_j+z_{b,j}$, $\forall i,j$ we have
$$M_{\alpha,i} = M_{\alpha,i,j}^*,\qquad \forall \alpha,i,j.$$
From~\eqref{eq:kin_dis3_HR1}-\eqref{eq:kin_dis3_HR2}, this gives~\eqref{eq:rest}.
\end{proof}

\subsection{Macroscopic scheme}
\label{subsec:macro_scheme}

The numerical scheme for the
system~\eqref{eq:nsml11_d}-\eqref{eq:nsml33_d} is given
by~\eqref{eq:update},\eqref{eq:kin_dis3_HR1}, \eqref{eq:kin_dis3_HR2} and requires to calculate fluxes having the form
\begin{eqnarray*}
F_{sv}(U) & = & \left(\begin{array}{c}
F_h\\ F_{hu}\\ F_{hv}
\end{array}\right)
= \int_{n_x\xi+n_y\gamma\geq 0}
\left(\begin{array}{c}
1\\
\xi\\
\gamma
\end{array}\right)
(n_x\xi+n_y\gamma) M(U,\xi,\gamma) d\xi d\gamma.
\end{eqnarray*}
with $M$ given by~\eqref{eq:M1d}, $n_x$ and $n_y$ being the components of a normal unit vector
${\bf n}$.
Defining the change of variables
$$\xi = u + c z_1,\qquad \gamma = v + c z_2,$$
we can write
$$
F_{sv}(U) = h \int_{n_x(cz_1+u)+n_y(cz_2+v)\geq 0 } (n_x(cz_1+u)+n_y(cz_2+v))
\left(\begin{array}{c}
1\\
u+c z_1\\
v + c z_2
\end{array}\right) \chi_0(z_1,z_2)
dz_1 dz_2,$$
where $\chi_0$ is defined by~\eqref{eq:chi0}. A second change of variables $y_1 = n_x z_1 + n_y z_2$, $y_2 = n_x z_2 - n_y z_1$, $\tilde{u} = n_x u + n_y v$ gives
\begin{equation}
F_{sv}(U) = h \int_{\{y_1 \geq - \frac{\tilde{u}}{c}\} \times \R} (\tilde{u}+cy_1)
\left(\begin{array}{c}
1\\
u+c n_x y_1\\
v + c n_y y_1
\end{array}\right) \chi_0(y_1,y_2)
dy_1 dy_2,\label{eq:flu_hc}
\end{equation}
since $\chi_0$ is odd. The details of the computations of
formula~\eqref{eq:flu_hc} is given in appendix B.

Using the properties obtained at the kinetic level for the resolution
of the
system~\eqref{eq:massesvml}-\eqref{eq:mvtsvml}, the following
proposition holds.

\begin{proposition}
Under the CFL condition~\eqref{eq:CFLfull}, the
scheme~\eqref{eq:update},\eqref{eq:kin_dis3_HR1},
\eqref{eq:kin_dis3_HR2} satisfies the following properties.

\noindent (i) The macroscopic scheme derived
from~\eqref{eq:kin_dis3_HR1}-\eqref{eq:kin_dis3_HR2} using~\eqref{eq:update} is a consistent
discretization of the layer-averaged Euler system~\eqref{eq:massesvml}-\eqref{eq:mvtsvml}.

\noindent (ii) The water depth remains nonnegative i.e.
$$h^{n+1}_{i}\geq
0, \quad \forall i,\quad \mbox{when}\; h^{n}_{i}\geq
0\quad \forall i.$$

\noindent (iii) The
scheme~\eqref{eq:update},\eqref{eq:kin_dis3_HR1},
\eqref{eq:kin_dis3_HR2} is well-balanced.
\label{prop:prop_scheme_macro}
\end{proposition}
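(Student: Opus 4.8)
The plan is to obtain all three macroscopic statements as velocity moments of the kinetic-level properties already established in prop.~\ref{prop:dis_kin_HR}. The bridge between the two levels is the reconstruction formula~\eqref{eq:update}, which expresses every updated unknown $U^{n+1}_{\alpha,i}$ as the integral of the kinetic density $f^{n+1-}_{\alpha,i}$ against $\mathcal{K}(\xi,\gamma)$, that is, against $\kxi$ componentwise. Since the map $f\mapsto\int_{\R^2}\kxi f\,d\xi d\gamma$ is linear and positivity-preserving, each qualitative feature of $f^{n+1-}_{\alpha,i}$ transfers directly to $U^{n+1}_{\alpha,i}$, and the whole proof reduces to checking that the passage to moments is clean.

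For point~(i) I would observe that the claim coincides verbatim with prop.~\ref{prop:dis_kin_HR}~(i): the kinetic scheme~\eqref{eq:kin_dis3_HR1}-\eqref{eq:kin_dis3_HR2} is a consistent discretization of the Boltzmann-type system~\eqref{eq:gibbsbis}, and integration against $\mathcal{K}(\xi,\gamma)$ combined with the kinetic interpretation of prop.~\ref{prop:kinetic_sv_mcl} yields consistency with~\eqref{eq:massesvml}-\eqref{eq:mvtsvml}. The one point deserving emphasis is that the hydrostatic-reconstruction correction terms carrying $\theta_{\alpha,i,j}$ in~\eqref{eq:kin_dis3_HR1} are designed precisely so that, after integration, the moment identities~\eqref{eq:intdeltaM1-}-\eqref{eq:intdeltaM2-} reproduce the topography source ${\cal S}^*_{i,j}$ of~\eqref{eq:flux_HR}; the recovered macroscopic scheme is therefore exactly the HR scheme~\eqref{eq:upU0_HR}, whose consistency is classical.

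Point~(ii) follows by taking the zeroth moment. By prop.~\ref{prop:dis_kin_HR}~(ii) we have $f^{n+1-}_{\alpha,i}\geq 0$ for all $(\xi,\gamma)$; integrating the first component of~\eqref{eq:update} gives $h^{n+1}_{\alpha,i}=\int_{\R^2}f^{n+1-}_{\alpha,i}\,d\xi d\gamma\geq 0$ for every layer, and summing over $\alpha$ yields $h^{n+1}_{i}=\sum_{\alpha=1}^N h^{n+1}_{\alpha,i}\geq 0$. The implicit exchange step~\eqref{eq:kin_dis3_HR2} does not spoil this, since its zeroth moment telescopes and, thanks to $G_{1/2}=G_{N+1/2}=0$ in~\eqref{eq:G_boundary}, leaves the total water column unchanged, so that $h^{n+1}_i$ is fixed by the positivity-preserving convective step~\eqref{eq:kin_dis3_HR1} alone. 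For point~(iii) I would invoke the lake-at-rest identity of prop.~\ref{prop:dis_kin_HR}~(iii), namely $f^{n+1-}_{\alpha,i}=M_{\alpha,i}$; inserting it into~\eqref{eq:update} and using the moment relations~\eqref{eq:kinmom} gives $U^{n+1}_{\alpha,i}=\int_{\R^2}\kxi M_{\alpha,i}\,d\xi d\gamma=U^{n}_{\alpha,i}$, which is exactly the macroscopic well-balancing.

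I do not expect a genuine obstacle here, as prop.~\ref{prop:dis_kin_HR} already carries the analytical weight. The only place requiring care is the bookkeeping in point~(i): one must verify that the HR correction and the implicit exchange terms integrate to exactly the intended macroscopic quantities, namely the source ${\cal S}^*_{i,j}$ and a mass-conserving interlayer redistribution, which is guaranteed by~\eqref{eq:intdeltaM1-}-\eqref{eq:intdeltaM2-} together with the invertibility and positivity properties of lemma~\ref{lem:inverse}. The remaining verifications are purely routine moment computations.
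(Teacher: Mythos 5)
Your proposal is correct and follows exactly the route the paper intends: the paper's own proof is the single line ``similar to the one given in prop.~\ref{prop:dis_kin_HR}'', and your argument simply spells out the passage from the kinetic properties of prop.~\ref{prop:dis_kin_HR} to the macroscopic ones by taking moments via~\eqref{eq:update}. The three verifications (consistency by integration against $\mathcal{K}(\xi,\gamma)$, positivity of $h^{n+1}_i$ as the zeroth moment of a nonnegative kinetic density summed over layers, and well-balancing from $f^{n+1-}_{\alpha,i}=M_{\alpha,i}$ at rest) are all sound.
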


\begin{proof}[Proof of prop.~\ref{prop:prop_scheme_macro}]
The proof is similar to the one given in prop.~\ref{prop:dis_kin_HR}.
\end{proof}

\subsection{The discrete layer-averaged Navier-Stokes system}
\label{subsec:NS_dis}

In this paragraph, we detail the space discretization of the viscous
terms. Several expressions have been obtained for the viscous terms,
see paragraph~\ref{subsec:NS}. In this paragraph, we give a numerical scheme for the
model~\eqref{eq:nsml3_d}, rewriting it under the form
\begin{equation}
(h_{\alpha}{\bf u}_{\alpha})^{n+1} = \widetilde{h_{\alpha}{\bf
  u}_{\alpha}} + \Delta t^n S_{v,f}(U),
\label{eq:viscous_terms}
\end{equation}
with $S_{v,f}(U) = (S_{v,f,1},\ldots,S_{v,f,N})^T$ and
\begin{eqnarray*}
\widetilde{h_{\alpha}{\bf u}_{\alpha}} & = & h_{\alpha}{\bf u}_{\alpha} -
\Delta t^n \Bigl( \nabla_{x,y} .\left(
  h_{\alpha} {\bf u}_{\alpha} \otimes {\bf u}_{\alpha} \right) -
\nabla_{x,y}  \bigl(\frac{g}{2} h h_{\alpha} \bigr)  - gh_\alpha \nabla_{x,y} z_b \nonumber\\
& & + {\bf u}_{\alpha+1/2}^{n+1} G_{\alpha+1/2} - {\bf u}_{\alpha-1/2}^{n+1}
G_{\alpha-1/2} \Bigr),\label{eq:nsml2_d1}\\
S_{v,f,\alpha} & = & \nabla_{x,y} .
\bigl( h_\alpha {\bf \Sigma}_\alpha^0 \bigr) + \Gamma_{\alpha+1/2} ({\bf u}_{{\alpha}+1}-{\bf
    u}_{\alpha})- \Gamma_{\alpha-1/2} ({\bf u}_{\alpha}-{\bf u}_{{\alpha}-1}) - \kappa_{\alpha} {\bf u}_{\alpha} + W_\alpha {\bf t}_s,
\end{eqnarray*}
with the definitions~\eqref{eq:sigma_mxx_1},\eqref{eq:sigma_mxy_1},\eqref{eq:sigma_alpha_1},
\eqref{eq:sigma_m_1},\eqref{eq:lambda_12} for
${\bf T}^0_{\alpha}$, $\Gamma_{\alpha\pm 1/2}$. It remains to give a fully discrete scheme for the viscous and friction
terms $\{S_{v,f,\alpha}\}$.

The discretization of~\eqref{eq:viscous_terms} is done using a finite
element / finite difference approximation obtained as follows. We depart form the
triangulation defined in paragraph~\eqref{subsec:triangulation} and we
use the cells values of the variables~-- inherited from the finite
volume framework~-- to define a $\mathbb{P}_1$ approximation of the variables.

Notice that, compared to the advection and pressure terms, the discretization of
the viscous terms raises less difficulties and we propose a stable
scheme that will be extended to more general rheology
terms~\cite{BDGSM} and more completely analyzed in a forthcoming paper.

Using a classical $\mathbb{P}_1$ finite element type approximation
with mass lumping of
Eq.~\eqref{eq:viscous_terms}, we get
\begin{eqnarray}
{\bf U}_\alpha^{n+1} & = &  \widetilde{{\bf U}}_\alpha -\Delta t^n \Bigl( {\cal K}_{\alpha+1} {\bf U}_{\alpha+1}
+ {\cal
  K}_{\alpha} {\bf U}_\alpha + {\cal K}_{\alpha-1} {\bf U}_{\alpha-1} \Bigr)\nonumber\\
& & + \Delta t^n
{\cal G}_{\alpha+1/2} ({\bf U}_{\alpha+1}-{\bf U}_{\alpha}) - \Delta
    t^n {\cal G}_{\alpha-1/2} ({\bf U}_{\alpha}-{\bf U}_{\alpha-1}) - \Delta t^n\kappa_\alpha
{\bf U}_\alpha + \Delta t^n W_\alpha {\bf t}_s,
\label{eq:viscous_fe}
\end{eqnarray}
with the matrices
\begin{eqnarray*}
{\cal K}_{\alpha,ji} & = & \frac{\nu}{2}\int_\Omega \left(\frac{h_{\alpha,j}}{h_{\alpha+1,j}
  + h_{\alpha,j}} + \frac{h_{\alpha,j}}{h_{\alpha,j}  + h_{\alpha-1,j}}\right)\nabla_{x,y}
\varphi_i . \nabla_{x,y} \varphi_j\ dx dy,\\
{\cal K}_{\alpha\pm 1,ji} & = & \frac{\nu_{\alpha\pm 1/2}}{2}\int_\Omega \frac{h_{\alpha\pm 1,j}}{h_{\alpha+1,j}  + h_{\alpha,j}} \nabla_{x,y}
\varphi_i . \nabla_{x,y} \varphi_j\ dx dy,\\
{\cal G}_{\alpha+1/2,ji} & = & \nu_{\alpha+1/2} \int_\Omega \frac{1 + |\nabla_{x,y}
  z_{\alpha+1/2}|^2}{h_{{\alpha}+1}+h_{\alpha}}
\varphi_i .
\varphi_j dx dy,
\end{eqnarray*}
where $\varphi_i$, $\varphi_j$ are the basis functions. We have presented an explicit
in time version of~\eqref{eq:viscous_fe} that is stable under a
classical CFL condition. An implicit or semi-implicit version
of~\eqref{eq:viscous_fe} can also
be used.

The main purpose of this paper is to propose a stable and robust
numerical approximation of the incompressible Euler system with free
surface. Voluntarily, we give few details
concerning the numerical approximation of the dissipative terms:
\begin{itemize}
\item the viscous and friction terms are dissipative and hence a
  reasonable approximation leads to a stable numerical scheme.
\item In this paper, we consider a simplified Newtonian rheology for
  the fluid, the numerical approximation of the
  general (layer-averaged) rheology~\cite{BDGSM} will be
  studied in a forthcoming paper.
\end{itemize}

\subsection{Boundary conditions}
\label{subsec:BC}

The contents of this paragraph
slightly differ from previous works of one of the
authors~\cite{coussin} and valid for the classical Saint-Venant system. First,  we focus on the boundary conditions for the layer-averaged Euler system
i.e. the system~\eqref{eq:nsml1_d}-\eqref{eq:nsml2_d} for $\nu=0$, $\kappa=0$
and then for the viscous part.

\subsubsection{Layer-averaged Euler system}

In this paragraph we detail the computation of the boundary flux ${\cal F}({\bf U}_i,{\bf
U}_{e,i},{\bf n}_i)$
appearing in~\eqref{eq:upU0},\eqref{eq:flux},\eqref{eq:fluxbis}.
The variable ${\bf U}_{i,e}^n$ can be interpreted as an approximation
of the solution in a ghost cell adjacent to the boundary. As before we
introduce the vector
$$U_{i,e} = \left(h_{i,e}^n, (hu)_{1,i,e},\ldots,(hu)_{N,i,e},(hv)_{1,i,e},\ldots,(hv)_{N,i,e}\right)^T,$$
and we will use the flux vector splitting form associated to the
kinetic formulation~\eqref{eq:flux_split}
\begin{equation}
  {\cal F}(U_i,U_{i,e},{\bf n}_{i}) = F^+(U_i,{\bf n}_{i}) + F^-(U_{i,e},{\bf n}_{i})
\label{frie}
\end{equation}
with $U_{i,e}^n$ defined according to the boundary type.

\paragraph{Solid wall}
If we consider a node $i_0$ belonging to a solid wall, we prescribe a slip condition written
\begin{equation}
  {\bf u}_{\alpha} \cdot {\bf n}_{i_0} = 0,
\label{slcond}
\end{equation}
for $\alpha=1,\ldots,N$. We assume the continuity of the water depth
$h_{i_0,e}=h_{i_0}$ and of the tangential
component of velocity.

From~\eqref{eq:flu_hc} with~\eqref{slcond} we obtain
$$F_h^+(U_{\alpha,i_0})+F_h^-(U_{\alpha,i_0,e}) = 0,$$
and
\begin{equation*}
\begin{pmatrix}
F_{hu}^+(U_{\alpha,i_0})+F_{hu}^-(U_{\alpha,i_0,e}) \\
F_{hv}^+(U_{\alpha,i_0})+F_{hv}^-(U_{\alpha,i_0,e}) \end{pmatrix} . {\bf n}_{i_0}=
\frac{gh_{\alpha,i_0} h_{i_0}}{2},\quad
\begin{pmatrix}
F_{hu}^+(U_{\alpha,i_0})+F_{hu}^-(U_{\alpha,i_0,e}) \\
F_{hv}^+(U_{\alpha,i_0})+F_{hv}^-(U_{e,\alpha,i_0}) \end{pmatrix} . {\bf t}_{i_0} = 0,
\label{Fslcond}
\end{equation*}
for $\alpha=1,\ldots,N$ for a vector ${\bf t}_{i_0}$ orthogonal to
${\bf n}_{i_0}$. The condition (\ref{slcond}) is therefore prescribed
weakly but a posteriori, in order to be sure
that~\eqref{slcond} is satisfied, we can apply
$${\bf u}_{\alpha,i_0,e} = {\bf u}_{\alpha,i_0}  - ({\bf
  u}_{\alpha,i_0} . {\bf n}_{i_0}) {\bf n}_{i_0}.$$

\paragraph{Fluid boundary}
Even if the considered model is more complex than the Shallow water
system, we can consider that the type of the flow depends, for each layer, on the value of
the Froud number $Fr_\alpha=|{\bf u}_\alpha|/\sqrt{gh}$,
a flow is said torrential, for $|{\bf u}_\alpha| >\sqrt{gh}$ and fluvial,  for $|{\bf u}_\alpha| <\sqrt{gh}$.

Generally, for the fluid boundaries, the conditions
prescribed by the
user depend on the type of the flow defined by this criterion.



 We have also to notice that with ${\bf n}$ the outward unit normal to the
 boundary edge, an inflow boundary corresponds to $ {\bf u}_\alpha \cdot {\bf
n} <0$ and an
outflow one to $ {\bf u}_\alpha \cdot {\bf n} >0$.

We will treat the following cases:
for a fluvial flow boundary, we distinguish the cases where the flux
or the water depth are given, while for a torrential flow we
distinguish the inflow or outflow boundaries.

\paragraph{Fluvial boundary. Flux given}
 We consider first a fluvial boundary, so we assume that
\begin{equation}
|{\bf u}_\alpha| < \sqrt{gh}.
\label{fluv}
\end{equation}
If for each layer, the flux ${\bf q}_{g,\alpha}$ is given, then we wish to impose
\begin{equation}
\left( F_h({\bf U}_{\alpha,i}) +  F_h({\bf U}_{e,\alpha,i}) \right)
 . {\bf n}_i =
 {\bf q}_{g,\alpha} . {\bf n}_i,\quad F_h({\bf U}_{e,\alpha,i}) . {\bf t}_i = q_{g,\alpha} . {\bf t}_i,\label{flqn}\\
\end{equation}
with ${\bf n}_i . {\bf t}_i = 0$. The value ${\bf q}_{g,\alpha}$
depends on the value of the prescribed flux along the vertical axis.

If one directly imposes~\eqref{flqn}, it leads to instabilities (especially because the
numerical values are not necessarily in the regime of validity of this
condition). We propose to discretize it in a weak form.
We denote
\begin{equation}
a_1={\bf q}_{g,\alpha} . {\bf n}_i - F_h({\bf U}_{\alpha,i})  . {\bf n}_i.
\label{a1}
\end{equation}
If $a_1\geq 0$, we prescribe
$$F_h({\bf U}_{e,\alpha,i})=0,\quad F_{hu}({\bf
  U}_{e,\alpha,i})=0,\quad\mbox{and}\quad F_{hv}({\bf U}_{e,\alpha,i})=0.$$
If $a_1< 0$, we have to write a third equation to be able to compute
the three components of ${\bf U}_{e,\alpha}$ and by analogy with what
is done for the Saint-Venant system~-- where the Riemann invariant
related to the outgoing characteristic is preserved~-- we assume the quantity
${\bf u}_\alpha . {\bf n}$ is constant though the interface, i.e.
\begin{equation}
{\bf u}_{e,\alpha,i} . {\bf n}_i - 2 \sqrt{gh_{e,i}} = {\bf
  u}_{\alpha,i} . {\bf n}_i - 2 \sqrt{gh_i}.
\label{Rout}
\end{equation}
As~\eqref{fluv} is satisfied, the eigenvalue $u_{e,\alpha,i} . {\bf
  n}_i - 2 \sqrt{gh_{e,i}}$ is positive.

We use the equations~\eqref{flqn} and~\eqref{Rout} to compute
$h_{e,i}$ and ${\bf u}_{e,\alpha,i} . {\bf n}_i$. We denote
$a_2 = {\bf u}_{\alpha,i} . {\bf n}_i - 2 \sqrt{g h_i}$
and
\begin{equation}
m = \frac{{\bf u}_{e,\alpha,i} . {\bf
  n}_i}{\sqrt{gh_{e,i}}}.
\label{defm}
\end{equation}
 Then the equation~(\ref{Rout}) gives
\begin{equation}
\sqrt{gh_{e,i}}\ (m-2)=a_2
\label{a2m}
\end{equation}
and using the definition of $F_h$
(see~\eqref{eq:flux_h_expr}) with~\eqref{flqn},\eqref{a1}  we have
\begin {equation}
\frac{h_{e,i}}{\pi}\int_{z\leq \frac{-{\bf u}_{e,\alpha,i}.{\bf n}_i}{\sqrt{\frac{gh_{e,i}}{2}}}}
\left( {\bf u}_{e,\alpha,i} . {\bf n}_i + \sqrt{\frac{gh_{e,i}}{2}} z \right)
\sqrt{1 - \frac{z^2}{4}} dz = a_1,
\label{a11}
\end {equation}
or using~\eqref{Rout}
\begin {equation}
\psi (h_{e,i}) = a_1,
\label{a12}
\end {equation}
with
$$\psi (h_{e,i}) = \frac{h_{e,i}}{\pi} \int_{z\leq\,\frac{-(2\sqrt{gh_{e,i}} + a_2)}{\sqrt{\frac{gh_{e,i}}{2}}}}
\left( 2\sqrt{gh_{e,i}} + a_2 + \sqrt{\frac{gh_{e,i}}{2}} z \right)
\sqrt{1 - \frac{z^2}{4}} dz
.$$
It is easy to see that $h \mapsto \psi (h)$ is a growing function
of $h$ with $\psi (0)=0$ and $\psi (+\infty)=+\infty$ and
therefore, Eq.~\eqref{a11} admits a unique solution for any $a_1 >
0$. Using~\eqref{a2m}, Eq.~\eqref{a12} is equivalent
to solve for $m$
\begin {equation}
\Psi(m)=a_2,
\label{psia2}
\end{equation}
with
$$\Psi (m) = K\frac{m-2}{\phi(m)^{1/3}},$$
and $K = \bigl( {\sqrt{2} g a_1 } \bigr)^{1/3}$
$$\phi (m) = \frac{1}{\pi}\int_{z\leq -\sqrt{2}m}
(\sqrt{2}m + z)
\sqrt{1-\frac{z^2}{4}} dz.$$
In practice, we use a Newton-Raphson
algorithm to solve an equivalent form of Eq.~\eqref{psia2}, namely
$$m-2 - \frac{a_2}{K}\phi(m)^{1/3} = 0.$$
Once the above equation has been solved, from~\eqref{defm},\eqref{a2m} we deduce
\begin{equation*}
h_{e,i} = \frac{1}{g}\bigl(\frac{a_2}{m-2}\bigr)^2,
\quad
{\bf u}_{e,\alpha,i} . {\bf n}_i = \frac{a_2 m}{m-2} = m\sqrt{g h_{e,i}}.
\end{equation*}
\begin{remark}
Notice that in the procedure proposed to calculate ${\bf
  u}_{e,\alpha,i}$, $h_{e,i}$, even if $h_{e,i}$ represents a total
water depth, a different value of $h_{e,i}$ is
calculated for each layer $\alpha$. $h_{e,i}$ is only used to ensure~\eqref{flqn}.
\end{remark}

\paragraph{ Fluvial boundary. Water depth given}
 We verify that the flow is actually fluvial, i.e.
\begin {equation}
({\bf u}_{\alpha,i}. {\bf n}_i-\sqrt{gh}_i)({\bf u}_{\alpha,i}. {\bf n}_i+\sqrt{gh}_i)\leq 0.
\label {vpfluv}
\end {equation}
Since the water depth is given, we write
\begin {equation}
h_{e,i}=h_{g,i}.
\label{hg}
\end {equation}
We assume the continuity of the tangential component
\begin {equation}
(h{\bf u})_{e,\alpha,i} . {\bf t}_i = (h{\bf u})_{\alpha,i} . {\bf t}_i,
\end {equation}
with ${\bf t}_i . {\bf n}_i =0$. To define completely ${\bf u}_{e,\alpha,i}$, we assume, as in the previous case, that
 the Riemann invariant is constant along
the outgoing characteristic (\ref{Rout}), so we obtain
\begin {equation}
{\bf u}_{e,\alpha,i}. {\bf n}_i = {\bf u}_{\alpha,i}. {\bf n}_i+2 \sqrt{g}(\sqrt{h_i}-\sqrt{h_{g,i}}).
\end {equation}
 Sometimes it appears that the numerical values do not satisfy
 the condition (\ref{vpfluv}), then the flow
 is in fact torrential and
\begin{itemize}
\item[$\bullet$] if ${\bf u}_{\alpha,i}. {\bf n}_i > 0$ , the condition (\ref{hg}) cannot be
satisfied (see Sec. 6.2.4),
\item[$\bullet$] if ${\bf u}_{\alpha,i}. {\bf n}_i < 0$ , one
  condition is missing and we
 prescribe ${\bf u}_{e,\alpha,i}. {\bf n}_i={\bf u}_{\alpha,i}. {\bf
   n}_i$.
\end{itemize}
\paragraph{Torrential inflow boundary}
 For a torrential inflow boundary we assume that the water depth
and the flux are given, then we prescribe
$$h_{e,i}=h_{g,i},\quad
(h{\bf u})_{e,\alpha,i} . {\bf t}_i = (h{\bf u})_{g,\alpha,i} . {\bf t}_i,
$$
and
$$
\left( F_h({\bf U}_{\alpha,i}) +  F_h({\bf U}_{e,\alpha,i}) \right)
 . {\bf n}_i =
 {\bf q}_{g,\alpha} . {\bf n}_i = (h{\bf u})_{g,\alpha,i} . {\bf n}_i.
$$
In this case we have to compute $(h{\bf u})_{e,\alpha,i} . {\bf n}_i$
or ${\bf u}_{e,\alpha,i} .  {\bf n}_i$.
We consider an inflow boundary, so $(h{\bf u})_{g,\alpha,i} . {\bf n}_i < 0$ therefore
using the notation (\ref{a1}) we have $a_1< 0$. By analogy with the previous section we denote
$$m = \frac{ {\bf u}_{e,\alpha,i} .  {\bf n}_i}{\sqrt{gh_{g,i}}},$$
then the equation for $m$ is (see (\ref{defm})-(\ref{a11}))
$$
\phi(m)= \sqrt{\frac{2}{g}} \frac{a_1}{h_{g,i}^{3/2}}.
$$
As in the paragraph entitled~{\it Flux given}, the above equation has
 a unique solution $m <2$ for $a_1 < 0$.

\paragraph{Torrential outflow boundary}
In the case of a torrential outflow boundary,
we do not prescribe any condition.
We
assume that the two Riemann invariants are constant along the outgoing
characteristics leading to
\begin{eqnarray*}
& & {\bf u}_{e,\alpha,i} .  {\bf n}_i - 2 \sqrt{gh_{e,i}}= {\bf
  u}_{\alpha,i} .  {\bf n}_i - 2 \sqrt{gh_i},\\
& & {\bf u}_{e,\alpha,i} .  {\bf n}_i + 2 \sqrt{gh_{e,i}} = {\bf
  u}_{\alpha,i} .  {\bf n}_i + 2 \sqrt{gh_i},
\end{eqnarray*}
and we deduce $h_{e,i}=h_i$, ${\bf u}_{e,\alpha,i} .  {\bf n}_i= {\bf u}_{\alpha,i} .  {\bf n}_i$.
We assume that we also have $(h{\bf u})_{e,\alpha,i} . {\bf t}_i = (h{\bf u})_{\alpha,i} . {\bf t}_i$.


\subsubsection{Layer-averaged Navier-Stokes system}

Because of the fractional step we use, the boundary conditions for the
layer-averaged Euler system are, to some extent, independent from the
one used for the rheology terms.

For the resolution of Eq.~\eqref{eq:viscous_terms}, boundary conditions associated with the operator
$$\nabla_{x,y} . (h_\alpha {\bf T}_\alpha^0),$$
have to be specified and usually we prescribe homogeneous Neumann boundary
conditions (corresponding to an imposed stress). Of course, in particular cases, Dirichlet or Robin type
boundary conditions can also be considered.

\subsection{Toward second order schemes}

In order to improve the accuracy of the results the first-order scheme defined in paragraphs~\ref{subsec:fv}-\ref{subsec:macro_scheme} can be extended to a formally second-order one using a MUSCL like extension (see
\cite{VL}).

\subsubsection{Second order reconstruction for the layer-averaged
  Euler system}

In the definition of the flux~\eqref{eq:flux_HR},
we replace the piecewise constant values
${\bf U}_{i,j},{\bf U}_{j,i}$ by more accurate reconstructions deduced from
piecewise linear approximations, namely the values $\tilde{\bf U}_{i,j},\tilde{\bf
U}_{j,i}$ reconstructed on both sides of the interface. The
reconstruction procedure
is similar to the one used and described
in~\cite[paragraph.~5.1]{bristeau}.

The second order reconstruction is only applied for the horizontal
fluxes. For the exchange terms along the vertical axis involving the
quantities $G_{\alpha\pm 1/2}$, we keep the first order
approximation. Despite this, we recover over the simulations (see paragraphs~\ref{subsec:sta_sol_anal},~\ref{subsec:thacker}) a second order type convergence curve. For this reason, we call this reconstruction "second order".

\subsubsection{Modified Heun scheme}

The explicit time scheme~\eqref{eq:nsml1_d}-\eqref{eq:nsml2_d} used in the previous paragraphs corresponds to a first order explicit Euler
scheme. The second-order accuracy in time is usually recovered by the Heun
method~\cite{heun} that is a slight modification of the second order
Runge-Kutta method. More precisely, for a dynamical system written
under the form
\begin{equation}
\frac{\partial y}{\partial t} = f(y),
\label{eq:sysd}
\end{equation}
the Heun scheme consists in defining $y^{n+1}$ by
\begin{equation}
y^{n+1} = y(t^n+\Delta t^n) = \frac{y^n+\tilde{y}^{n+2}}{2},
\label{eq:heun}
\end{equation}
with
\begin{equation}
\tilde{y}^{n+1} = y^n + \Delta t^n f(y^n,t^n),\quad
\tilde{y}^{n+2} = \tilde{y}^{n+1} + \Delta t^n f(\tilde{y}^{n+1},t^{n+1}).\label{eq:2nd_order}
\end{equation}
But the scheme defined by \eqref{eq:heun} does not preserve the
invariant domains. Indeed, the time step being given by a CFL condition, $\Delta t^n$ in the relation
(\ref{eq:2nd_order}) should be replaced by $\tilde{\Delta
  t}^{n+1}$ i.e. the time step satisfying the CFL condition and calculated using $\tilde{y}^{n+1}$. Thus in situations where
the time step strongly varies from one iteration to another, the Heun
scheme does not preserve the positivity of the scheme.

To overcome this difficulty, we propose an improvement of the Heun
scheme
\begin{proposition}
The scheme defined by $y^{n+1} =  (1 - \gamma) y^n
+ \gamma \tilde{y}^{n+2}$ with
$$\tilde{y}^{n+1} =  y^n + \Delta t_1^n f(y^n),\qquad
\tilde{y}^{n+2} = \tilde{y}^{n+1} + \Delta t_2^{n}
f(\tilde{y}^{n+1}),$$
and
$$\Delta t^n = \frac{2\Delta t_1^n\Delta t _2^n}{\Delta t_1^n + \Delta t_2^n},\quad
\gamma = \frac{(\Delta t^n)^2}{2 \Delta t_1^n \Delta t_2^n},$$
is second order and compatible with a CFL constraint. Since
$\gamma \geq 0$, $y^{n+1}$ is a convex combination of  $y^n$
and $\tilde{y}^{n+2}$ so the scheme preserves the positivity. For the previous relations $\Delta t_1^{n}$ and $\Delta t_2^{n}$ respectively satisfy the
CFL conditions associated with $y^n$ and $\tilde{y}^{n+1}$.
\label{prop:2nd_order}
\end{proposition}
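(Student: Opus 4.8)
The plan is to establish the three assertions of the proposition separately, the algebraic core being the verification that the weight $\gamma$ and the effective step $\Delta t^n$ are calibrated exactly so that the scheme reproduces the Taylor expansion of the exact flow to second order. To lighten notation I would write $a=\Delta t_1^n$ and $b=\Delta t_2^n$ throughout.

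First I would record the closed form of $\gamma$. Substituting $\Delta t^n = 2ab/(a+b)$ into $\gamma = (\Delta t^n)^2/(2ab)$ gives
$$\gamma = \frac{2ab}{(a+b)^2}.$$
Since $a,b\ge 0$, the arithmetic--geometric mean inequality $(a+b)^2\ge 4ab$ yields $0\le\gamma\le 1/2$. Hence $y^{n+1}=(1-\gamma)y^n+\gamma\,\tilde y^{n+2}$ is genuinely a convex combination of $y^n$ and $\tilde y^{n+2}$, which is the property needed for positivity.

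Next, for second-order consistency, I would Taylor-expand the scheme about $y^n$ for the autonomous system $\partial_t y=f(y)$. The expansion $f(\tilde y^{n+1})=f(y^n)+a\,f'(y^n)f(y^n)+O(a^2)$ gives
$$\tilde y^{n+2}-y^n = (a+b)f(y^n) + ab\,f'(y^n)f(y^n) + O(a^2 b),$$
so that $y^{n+1}-y^n=\gamma\big[(a+b)f(y^n)+ab\,f'(y^n)f(y^n)\big]+O(\gamma a^2 b)$. The two chosen constants are precisely those for which $\gamma(a+b)=\Delta t^n$ and $\gamma ab=(\Delta t^n)^2/2$; substituting the closed form of $\gamma$ confirms both identities by direct computation. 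Consequently
$$y^{n+1}=y^n+\Delta t^n f(y^n)+\tfrac12(\Delta t^n)^2 f'(y^n)f(y^n)+O\big((\Delta t^n)^3\big),$$
which coincides with the Taylor expansion of the exact solution $y(t^n+\Delta t^n)$ to second order. For positivity and CFL compatibility I would then invoke the first-order results already established: each intermediate stage is a single forward step of the macroscopic scheme, $\tilde y^{n+1}$ using the admissible step $a$ associated with $y^n$ and $\tilde y^{n+2}$ using the admissible step $b$ associated with $\tilde y^{n+1}$, so by Proposition~\ref{prop:prop_scheme_macro} each stage keeps the state in the (convex) invariant domain, in particular with nonnegative water depth; since $0\le\gamma\le 1$, the convex combination $y^{n+1}$ remains in that domain.

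The main obstacle I anticipate is controlling the remainder in the consistency estimate rather than the leading-order cancellation, which is purely algebraic. The term $O(\gamma a^2 b)$ equals $\tfrac12 a(\Delta t^n)^2$ after using $\gamma ab=(\Delta t^n)^2/2$, and this is $O((\Delta t^n)^3)$ only when $a=O(\Delta t^n)$, i.e.\ when the ratio $a/b$ stays bounded. Using $\min(a,b)\le\Delta t^n\le\max(a,b)$ one sees that this holds automatically when $a\le b$ and requires $a/b$ bounded otherwise; in practice consecutive CFL-admissible steps differ only mildly, so the hypothesis is harmless. Making this comparability assumption explicit is the delicate point of the argument.
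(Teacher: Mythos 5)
Your proof follows essentially the same route as the paper's: Taylor-expand $\tilde y^{n+2}$ about $y^n$ and verify the two algebraic identities $\gamma(\Delta t_1^n+\Delta t_2^n)=\Delta t^n$ and $\gamma\,\Delta t_1^n\Delta t_2^n=(\Delta t^n)^2/2$, so that the combination reproduces the exact flow to second order. Your additional observations~--- the closed form $\gamma=2\Delta t_1^n\Delta t_2^n/(\Delta t_1^n+\Delta t_2^n)^2\le 1/2$ guaranteeing the convex combination, and the remark that the remainder $O(\gamma(\Delta t_1^n)^2\Delta t_2^n)$ is genuinely $O((\Delta t^n)^3)$ only when the ratio $\Delta t_1^n/\Delta t_2^n$ stays bounded~--- are refinements of points the paper's terser proof leaves implicit.
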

When $\Delta t_1^n = \Delta t_2^n = \Delta t^n$, the scheme reduces to
the classical Heun scheme with $\alpha=\gamma=1/2$.

\begin{proof}[Proof of proposition~\ref{prop:2nd_order}]
Using \eqref{eq:sysd}, a Taylor expansion of $ y(t^n + \Delta
t^n)$ gives
$$ y(t^n+\Delta t^n) = y^n + \Delta t^n f(y^n) + \frac{(\Delta
  t^n)^2}{2}f(y^n)f'(y^n) + {\cal O}((\Delta t^n)^3).$$
Using the definitions given in the proposition,  we have
$$\tilde{y}^{n+2} = y^n + \left( \Delta t_1^n + \Delta t^n_2
\right) f(y^n) + \Delta t_1^n \Delta t_2^n f(y^n)f'(y^n) + {\cal
  O}(\Delta t_2^n (\Delta t_1^n)^2),$$
and a simple calculus gives $\alpha y^n+ \beta \tilde{y}^{n+1}+ \gamma \tilde{y}^{n+2} -
y(t^n+\Delta t^n) = {\cal O}((\Delta t^n)^3)$,
that completes the  proof.
\end{proof}

\section{Numerical applications}
\label{sec:NumRes}

In this section, we use the numerical scheme to simulate several test cases:
analytical solutions or in situ measurements, stationary or
non-stationary solutions, for the Euler and Navier-Stokes systems. The
obtained results emphasize the accuracy of the numerical procedure in
a wide range of typical applications and its applicability to a real
tsunami case.

The numerical simulations presented in this section have been obtained
with the code Freshkiss3d~\cite{freshkiss3d} where the numerical scheme
presented in this paper is implemented.

\subsection{Stationary analytical solution}
\label{subsec:sta_sol_anal}

First, we compare our numerical model with stationary analytical
solutions for the free surface Euler system proposed by some of the authors
in~\cite{JSM_ANALYTIC}.

We consider as geometrical domain a channel $(x,y)\in
[0,x_{max}]\times [0,2]$. The
analytical solution given in~\cite[Prop.~3.1]{JSM_ANALYTIC} and
defined by
\begin{eqnarray}
z_b & = & \overline{z}_b-h_0-\frac{\alpha^2\beta^2}{2g\sin^2(\beta h_0)},
\label{eq:zb_anal}\\
u_{\alpha,\beta} & = & \frac{\alpha\beta}{\sin(\beta h_0)}\cos(\beta(z-z_b)),
\label{eq:udef}\\
v_{\alpha,\beta} & = & 0,\nonumber\\
w_{\alpha,\beta} & = & \alpha\beta \left(\frac{\partial z_b}{\partial
    x}\frac{\cos(\beta(z-z_b))}{\sin(\beta h_0)} + \frac{\partial h_0}{\partial
    x}\frac{\sin(\beta(z-z_b))\cos(\beta h_0)}{\sin^2(\beta
    h)}\right),\nonumber
\end{eqnarray}
with $\alpha=1$ m$^2$.s$^{-1}$, $\beta=1$ m$^{-1}$, $\overline{z}_b=cst$, $x_{max} = 20$ m and
\begin{equation}
h_0(x,y) =
\frac{1}{2}+\frac{3}{2}\frac{1}{1+\left(x-\frac{1}{2}x_{max}\right)^2}-\frac{1}{2}\
\frac{1}{2+\left(x-\frac{2}{3}x_{max}\right)^2},
\label{eq:H0}
\end{equation}
is a stationary regular analytical solution of the incompressible and
hydrostatic Euler system with free
surface~\eqref{eq:div}-\eqref{eq:u},\eqref{eq:bottom},\eqref{eq:free_surf}
with $p^a=0$.

In order to obtain the simulated solution, we consider the
topography defined by~\eqref{eq:zb_anal},\eqref{eq:H0} and we impose the following boundary
conditions
\begin{itemize}
\item[$\circ$] solid wall for the two boundaries $y=0\!$ m and $y=2\!$ m,
\item[$\circ$] given water depth $h_0(x_{max},y)$ at $x=x_{max} =
  20\!$ m,
\item[$\circ$] given flux defined by~\eqref{eq:udef} at $x = 0\!$ m.
\end{itemize}
We have performed the simulations for several unstructured meshes having
290 nodes and 2 layers, 597 nodes and 4 layers, 1010 nodes and 8
layers, 2112 nodes and 17 layers, see Remark~\ref{rem:polyhedron}.

\begin{remark}
In each case where a convergence curve towards an analytical solution
is presented, we have proceeded as follows. First, we choose a sequence
of unstructured meshes for the considered horizontal geometrical
domain. Then the number of layers is adapted so that each
3d element of the mesh can be approximatively considered as a regular
polyhedron.
\label{rem:polyhedron}
\end{remark}

On Fig.~\ref{fig:euler_sta}{\it (a)}, we have depicted the features of the
analytical solution we use for the convergence test, it clearly
appears on Fig.~\ref{fig:euler_sta}{\it (a)} that the velocity profile of
chosen analytical solution varies along the $z$ axis. Figure~\ref{fig:euler_sta}-{\it (b)} gives the convergence curve
towards the analytical solution i.e. the $\mbox{log}(L^2-error)$ of the
water depth~-- at time $T=300$ seconds when the stationary regime is reached~-- versus
$\mbox{log}(h_{a_0}/h_a)$ for the first and second-order schemes and they are compared to the theoretical order (we denote by $h_a$ the average edge length
and $h_{a_0}$ the average edge length of the coarser mesh).


\begin{figure}[htbp]
\begin{center}
\begin{tabular}{cc}
\includegraphics[height=4cm]{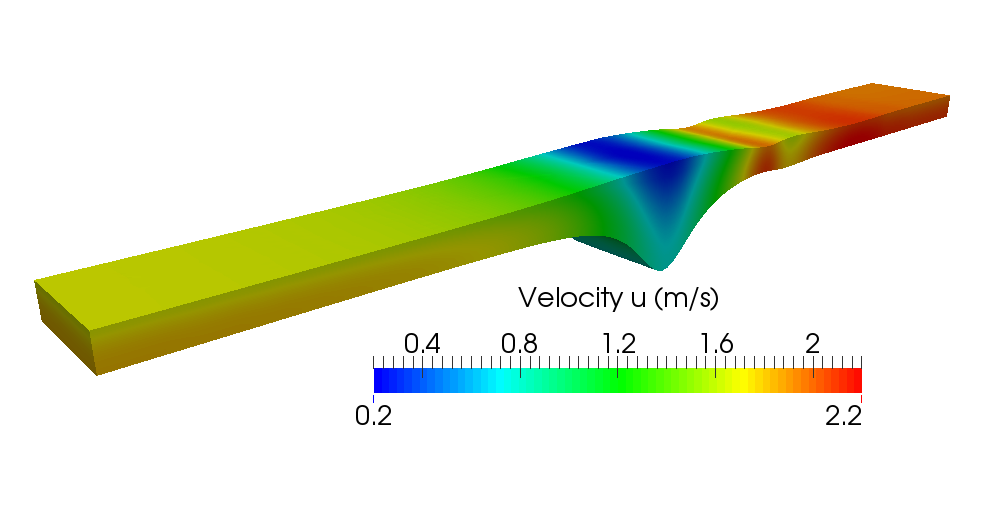}
&
\includegraphics[height=5cm]{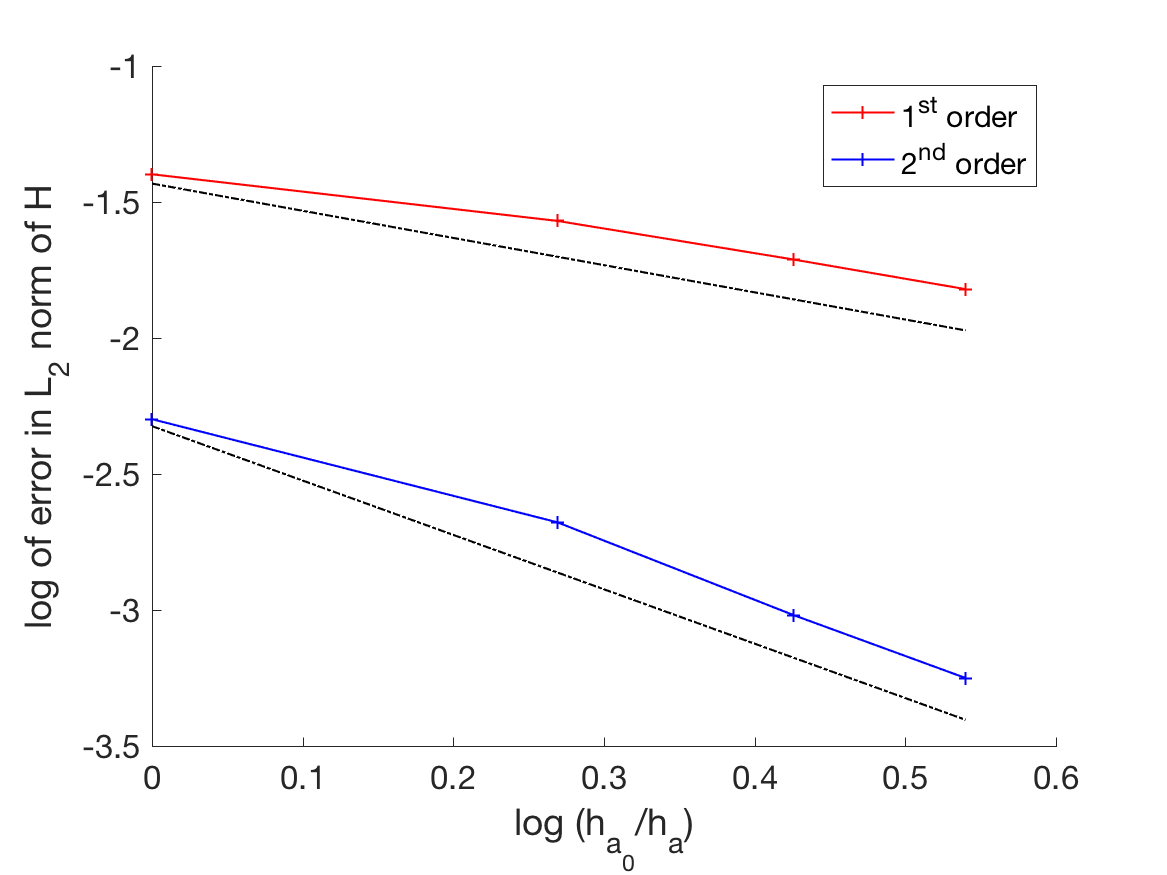}\\
{\it (a)} & {\it (b)}
\end{tabular}
\caption{{\it (a)}  Surface level of the analytical
  solution~\eqref{eq:zb_anal}-\eqref{eq:H0} and horizontal velocity $u_{\alpha,\beta}$, {\it (b)}  error between
 the analytical solution and the simulated one with the six
 meshes, first order  (space and time) and second order extension
 (space and time) schemes. The first and second order theoretical
 curves correspond to the dashed lines.}
\label{fig:euler_sta}
\end{center}
\end{figure}

\begin{remark}
Following the results given
in~\cite[paragraph~3.4]{JSM_ANALYTIC}, it is possible to obtain
stationary analytical solutions with
discontinuities for the Euler system. In this case, the unknowns are
not given by algebraic expressions but are
obtained through the resolution of an ODE involving only the water
depth $h$.

The numerical scheme has been used in the context of such a discontinuous
analytical solution. As planned, for the first and second order
schemes, we recover a first order convergence of the simulated
solution towards the analytical one because of the discontinuity of
the reference solution.
\end{remark}

\subsection{Non-stationary analytical solutions}
\label{subsec:thacker}

In a recent paper~\cite{sol_anal_NS}, some of the authors have
proposed time-dependent 3d analytical
solutions for the Euler and Navier-Stokes equations, some of
them concern hydrostatic models. We confront our numerical scheme
to these situations where analytical solutions are available.

\subsubsection{Radially-symmetrical parabolic bowl}
\label{subsubsec:bowl}

The Thacker' analytical solution~\cite{thacker}, corresponds to a
periodic oscillation in a parabolic bowl. In~\cite{sol_anal_NS} an extension of the Thacker'
radially-symmetrical solution to the situation where the velocity
field depends on the vertical coordinate is proposed.
This means the proposed solution, described hereafter in prop.~\ref{prop:sol_NS2d_draining}, is analytical
for the 3d incompressible hydrostatic Euler system but does not
correspond to a shallow water regime.

\begin{proposition}
For some $t_0\in\mathbb{R}$, $(\alpha,\beta,\gamma)\in\mathbb{R}_{+*}^3$ such
that 
$\gamma< 1$ let us consider
the functions $h,u,v,w,p$ defined for $t\geq t_0$ by
\begin{eqnarray}
& & h(t,x, y) = \max \left\lbrace 0,\frac{1}{r^2}f\left(\frac{r^2}{\gamma \cos(\omega t)-1}\right) \right\rbrace, \label{eq:sol11_hyd_thacker}\\
& & u(t,x,y,z) =x \left(  \beta  \left(
    z-z_b-\frac{h}{2}\right)+\frac{ \omega \gamma \sin(\omega t)}{2(1-\gamma \cos(\omega t))} \right), \label{eq:sol22_hyd_thacker}\\
& & v(t,x,y,z) =  y\left( \beta  \left(
    z-z_b-\frac{h}{2}\right)+\frac{\omega \gamma \sin(\omega t)}{2(1- \gamma \cos(\omega t))} \right), \label{eq:sol22bis_hyd_thacker}\\
& & w(t,x,y,z) = -\frac{\partial}{\partial x}\int_{z_b}^z udz -\frac{\partial}{\partial y}\int_{z_b}^z vdz,  \label{eq:sol33_hyd_thacker1}\\
& & p(t,x,y,z) =  
g(h+z_b-z),\label{eq:sol55_hyd_thacker1}
\end{eqnarray}
with $\omega=\sqrt{4\alpha g}$, $r=\sqrt{x^2 + y^2} $  and with a bottom topography defined by
\begin{equation}
z_b(x,y)=\alpha\frac{r^2}{2},
\label{eq:topo}
\end{equation}
and the function $f$ given by
$$f(z)=-\frac{4g}{\beta^2}+\frac{2}{\beta^2}\sqrt{4g^2+c z+\beta^2 \alpha g (\gamma^2-1)z^2},$$
$c$ being a negative constant such that $ c\le 4 g^2/(\gamma-1)$. 

Then $h,u,v,w,p$ as defined previously
satisfy the 3d hydrostatic Euler
system~\eqref{eq:div}-\eqref{eq:u} completed with~\eqref{eq:bottom},\eqref{eq:free_surf}.
The appropriate
boundary conditions in lateral boundary are also determined by the expressions
of $h,u,v,w$ given above.
\label{prop:sol_NS2d_draining}
\end{proposition}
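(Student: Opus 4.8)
The plan is to treat the statement as a direct verification: since explicit formulas for $h,u,v,w,p$ are given, I would substitute them into each equation and boundary condition of the hydrostatic Euler system \eqref{eq:div}-\eqref{eq:u}, \eqref{eq:bottom}, \eqref{eq:free_surf} together with the hydrostatic balance \eqref{eq:mvthm1}, and check that every relation reduces to an identity. Several of these hold essentially by construction. The vertical velocity \eqref{eq:sol33_hyd_thacker1} is exactly the continuous analogue of the expression built in Proposition~\ref{prop:def_w}; hence, differentiating the integral in $z$ (with the Leibniz rule applied to the lower limit $z_b$), one gets $\partial_x u+\partial_y v+\partial_z w=0$ identically, so the divergence-free condition \eqref{eq:div} is automatic, and evaluating \eqref{eq:sol33_hyd_thacker1} at $z=z_b$ yields $w|_{z_b}=\mathbf{u}|_{z_b}\cdot\nabla_{x,y}z_b$, which is precisely the bottom impermeability \eqref{eq:bottom}. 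Likewise, the pressure \eqref{eq:sol55_hyd_thacker1} equals $g(\eta-z)$ with $\eta=h+z_b$, so $\partial_z p=-g$ gives \eqref{eq:mvthm1} (with the normalization $\rho_0=1$ used in \eqref{eq:u}), $p(t,x,y,\eta)=0$ is the free-surface pressure condition, and $\nabla_{x,y}p=g\nabla_{x,y}\eta$ supplies the reduced pressure gradient appearing in \eqref{eq:u}.

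Two nontrivial relations remain: the kinematic free-surface condition \eqref{eq:free_surf} and the horizontal momentum equation \eqref{eq:u}. For the former, I would substitute $h,u,v,w$ and exploit the radial symmetry of the ansatz: since $z_b=\alpha r^2/2$ and $h$ depends only on $(t,r)$, while the horizontal velocity has the separated form $\mathbf{u}=(x,y)^T\,b(t,x,y,z)$ with $b=\beta(z-z_b-\tfrac h2)+a(t)$ and $a(t)=\tfrac{\omega\gamma\sin(\omega t)}{2(1-\gamma\cos(\omega t))}$, the condition collapses to a single scalar identity in $(t,r)$. Equivalently, one may verify the depth-integrated mass balance $\partial_t h+\nabla_{x,y}\cdot\int_{z_b}^{\eta}\mathbf{u}\,dz=0$, which is equivalent to \eqref{eq:free_surf} once \eqref{eq:div} and \eqref{eq:bottom} are known; this is the step that pins down the precise form of $h$ in \eqref{eq:sol11_hyd_thacker}, through the argument $r^2/(\gamma\cos(\omega t)-1)$ and the profile $f$.

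The core of the proof is the horizontal momentum equation \eqref{eq:u}. Using \eqref{eq:div}, I would first rewrite it in advective form $\partial_t\mathbf{u}+(\mathbf{U}\cdot\nabla)\mathbf{u}+g\nabla_{x,y}\eta=0$, and by the $x\leftrightarrow y$ symmetry of the ansatz it suffices to check the first component, $\partial_t u+u\,\partial_x u+v\,\partial_y u+w\,\partial_z u+g\,\partial_x\eta=0$. I would then insert $u=x\,b$, $v=y\,b$, compute $\partial_t u$ and the three advection terms using the explicit $z_b$, $h$ and $a(t)$, and collect the result, which is a polynomial of degree at most two in $z$. The design of the solution is such that matching the coefficients of $z^0,z^1,z^2$ should make everything telescope: the relation $\omega^2=4\alpha g$ handles the time-oscillatory part, while the square-root profile $f$ (and the admissibility constraint $c\le 4g^2/(\gamma-1)$ ensuring $f\ge 0$) is chosen exactly so that the remaining radial terms cancel against $g\,\partial_x\eta$.

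The main obstacle I expect is precisely this last algebraic reduction: tracking the coupled $r$- and $z$-dependence entering through $z-z_b-h/2$ inside $b$, together with the time dependence carried by $a(t)$ and by $h$, and showing that after using $\omega^2=4\alpha g$ and the defining relation for $f$ every term vanishes. I would organize the work by separating the $z$-linear part of $b$ (which couples to $w$ and to $\partial_z u$) from the purely $(t,r)$ part, verify the momentum balance by matching the $z$-coefficients, and only at the end invoke the free-surface/mass relation of the previous step to close the identity. The verifications of \eqref{eq:free_surf} and of \eqref{eq:u} are not independent, since both rely on the same structure of $h$; I would therefore keep the two computations aligned so as not to repeat the $r$-algebra twice.
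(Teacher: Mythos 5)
Your plan is the right one and matches what the paper intends: the paper gives no written proof of Proposition~\ref{prop:sol_NS2d_draining} (it is imported from~\cite{sol_anal_NS}, and the analogous Proposition~\ref{prop:sol_NS3d} is explicitly dismissed as ``very simple computations''), so direct substitution into \eqref{eq:div}--\eqref{eq:u}, \eqref{eq:bottom}, \eqref{eq:free_surf} is exactly the intended argument, and you correctly sort the relations into those that hold by construction (incompressibility and bottom impermeability from the definition of $w$, the hydrostatic pressure) and those that carry the actual content (depth-integrated mass conservation pinning down $f$, and the degree-two-in-$z$ coefficient matching in the momentum equation using $\omega^2=4\alpha g$). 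The only caveat worth recording is that the verification is pointwise in the wet region $h>0$; the $\max\{0,\cdot\}$ in \eqref{eq:sol11_hyd_thacker} only matters at the moving wet/dry front, where the solution is understood in the weak sense.
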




The geometrical domain is defined by~$(x,y)\in
[-L/2,L/2]^2$ and the chosen
parameters are $\alpha=2$, $\beta=1$, $\gamma=0.3$, $c=-1$,
$L=1$, the considered analytical solution is depicted on Fig;~\ref{fig:thacker2d_xz_slice}. The initial
conditions correspond to~\eqref{eq:sol11_hyd_thacker}-\eqref{eq:sol55_hyd_thacker1} at time $t=t_0=0$ s.

\begin{figure}[htbp]
\begin{center}
\begin{tabular}{cc}
\includegraphics[width=8cm]{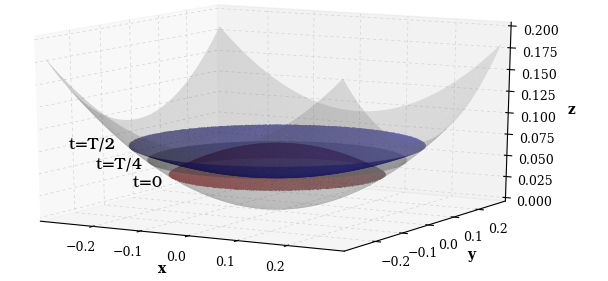}
&
\includegraphics[width=8.5cm]{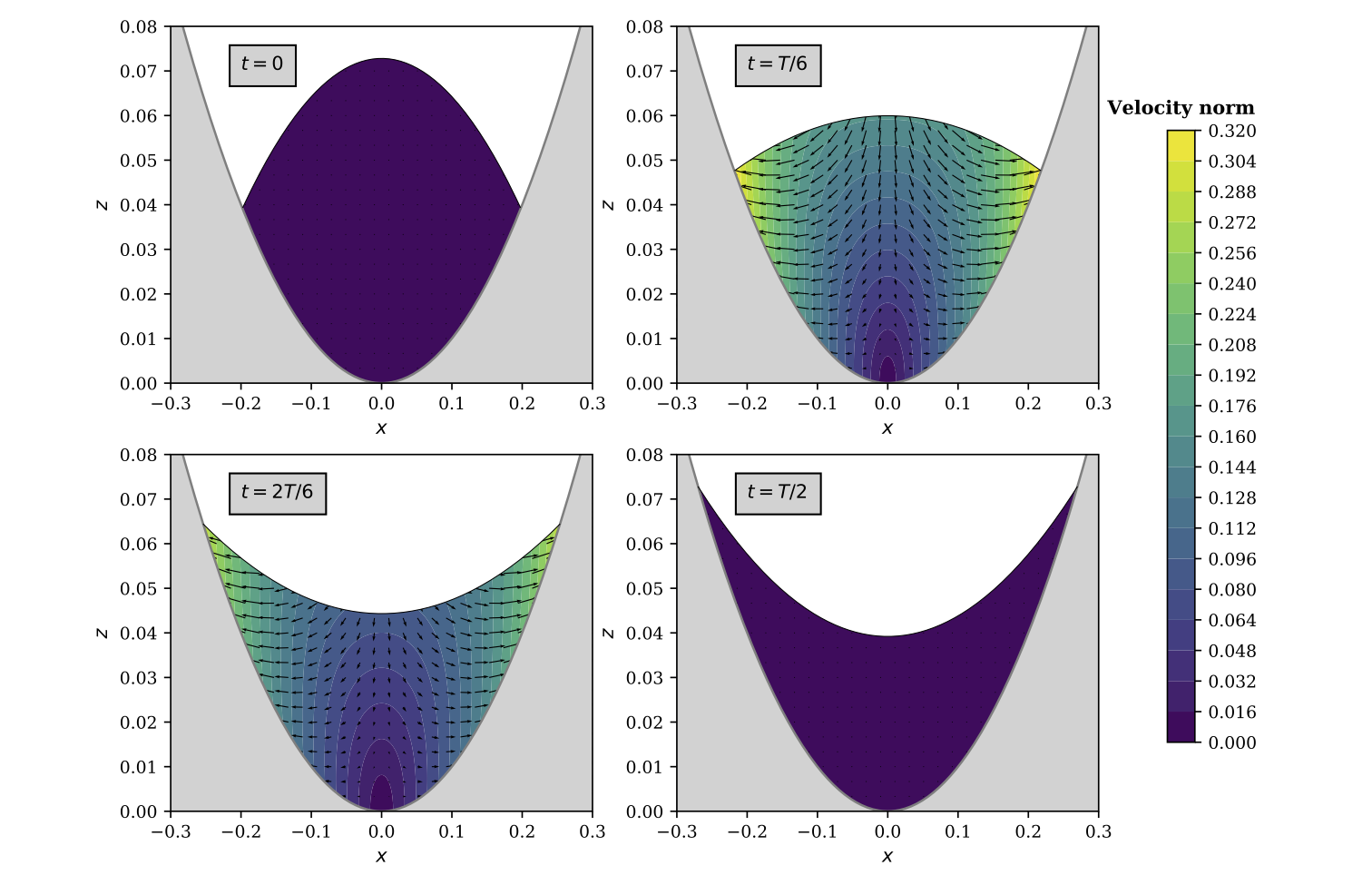}\\
{\it (a)} & {\it (b)}
\end{tabular}
\caption{3D Axisymmetrical parabolic bowl: {\it (a)} free surface at $t=0$
  (red), $t=T/4$ (dark grey), $t=T/2$ (blue), with the period $T$
  defined by $T=2 \pi /\omega$, {\it (b)} velocity norm and vectors at
  $t=0, T/6, 2T/6, T/2$, in $(x,y=0,z)$ slice plane.}
\label{fig:thacker2d_xz_slice}
\end{center}
\end{figure}

In order to evaluate the convergence rate of the simulated solution $h_{sim}$
towards the analytical one $h_{anal}$, we plot the error rate versus the space
discretization for five unstructured meshes with 1273 nodes and a single
layer, 11104 nodes and 6 layers, 30441 nodes and 15 layers, 59473 nodes and
30 layers
and 98137 nodes and 50 layers, see Remark~\ref{rem:polyhedron}. We have plotted (see Fig.~\ref{fig:conv_thacker_ns}) the $\mbox{log}(L^2-error)$ over the
water depth at time $T=2\pi/\omega$ seconds versus
$\mbox{log}(h_{a_0}/h_a)$ for the first and second-order schemes and they
are compared to the theoretical order.

\begin{figure}[hbtp]
\begin{center}
\begin{tabular}{cc}
\includegraphics[width=7cm]{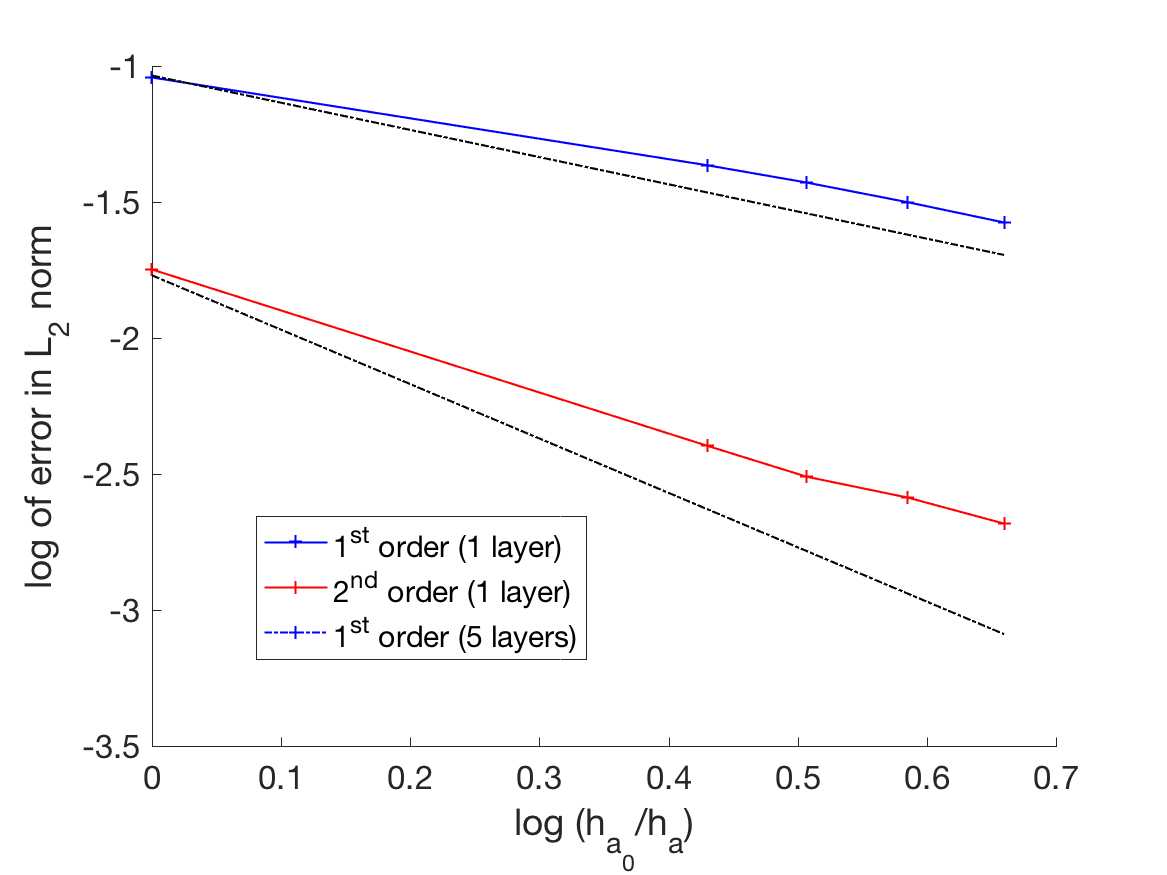}\\
\end{tabular}
\end{center}
\caption{Parabolic bowl:
error between
 the analytical water depth and the simulated one with the five
 unstructured meshes. The curves for the first order scheme (space and
 time) and its second order extension (space and time) are compared to
 the first and second order theoretical curves (dashed lines).}
\label{fig:conv_thacker_ns}
\end{figure}
Notice that on Fig.~\ref{fig:conv_thacker_ns}-{\it (b)}, we have plotted the
simulated solution obtained with 1 layer~-- that corresponds to the
classical Saint-Venant system~-- and the simulated solution with 5
layers. The obtained convergence curves coincide proving the
stability of the numerical scheme for the layers averaged system. The
analytical solution is non stationary and hence, the errors due to the
time scheme are combined with the one induced by the space
discretization and it is the reason why we do not recover the
theoretical order of convergence.

\subsubsection{Draining of a tank}
\label{subsec:sol_anal_NS}

Considering the Navier-Stokes
system~\eqref{eq:incomp}-\eqref{eq:mvthm1} completed with the boundary
conditions~\eqref{eq:free_surf}-\eqref{eq:slip}, the following
proposition holds, see~\cite{sol_anal_NS} for more details about the
proposed analytical solution.

\begin{proposition}
For some $t_0\in\mathbb{R}$, $t_1\in\mathbb{R}_+^*$, $(\alpha,\beta)\in\mathbb{R}_+^2$ such
that $\alpha\beta >L$, let us consider
the functions $h,u,v,w,p,\phi$ defined for $t\geq t_0$ by
\begin{eqnarray*}
& & h(t,x, y) = \alpha f(t), \label{eq:sol11_hyd}\\
& & u(t,x,y,z) = \beta\left(
    (z-z_b)-\frac{\alpha}{2} f(t) \right)+ f(t) (x\cos^2(\theta) +y \sin^2(\theta) ), \label{eq:sol22_hyd}\\
& & v(t,x,y,z) = \beta\left( (z-z_b)-\frac{\alpha}{2}f(t)\right)+ f(t)(x \cos^2(\theta) + y\sin^2(\theta) ), \label{eq:sol22bis_hyd}\\
& & w(t,x,y,z) = f(t)(z_b-z), \label{eq:sol33_hyd}\\
& & p(t,x,y,z) =  p^a(t,x,y) - 2 \nu f(t) +
g(h-(z-z_b)),\label{eq:sol55_hyd}
\end{eqnarray*}
where $f(t)=1/(t-t_0+t_1)$ and with a flat bottom
$z_b(x,y)=z_{b,0}=cst$ and $p^a(t,x,y)=p^{a,1}(t)$,
with $p^{a,1}(t)$ a given function. 

Then $h,u,v,w,p$ as defined previously
satisfy the 3d hydrostatic Navier-Stokes
system~\eqref{eq:incomp}-\eqref{eq:mvthm1}
completed with the boundary 
conditions~\eqref{eq:uboundihmf},\eqref{eq:bottom},\eqref{eq:uboundihm},\eqref{eq:free_surf} and
$\kappa = \frac{2 \nu \alpha \beta}{h(t,x,y) [\alpha \beta -2 (x\cos^2(\theta) + y\sin^2(\theta))] }$
in~\eqref{eq:uboundihmf}, $W=\nu\beta/2$ with ${\bf t}_s =\frac{1}{\sqrt{2}} (1,1,0)^t$ in~\eqref{eq:uboundihm}. The appropriate
boundary conditions for  $x\in \{-L/2, L/2\}$ or $y\in \{-L/2, L/2\}$ are also determined by the expressions
of $h,v,u,w$ given above.

Choosing the viscosity $\nu=0$, the variables $h,u,v,w,p$ become
analytical solutions of the 3d hydrostatic Euler
system~\eqref{eq:div}-\eqref{eq:u} completed
with the boundary conditions~\eqref{eq:free_surf},\eqref{eq:bottom}
and $p(t,x,y,\eta(t,x,y))=0$.
\label{prop:sol_NS3d}
\end{proposition}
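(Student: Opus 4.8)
The plan is a direct verification: I would substitute the explicit fields $h,u,v,w,p$ into each equation of the hydrostatic Navier--Stokes system \eqref{eq:incomp}--\eqref{eq:mvthm1} and into each boundary condition, and check that they hold for the stated $\kappa$, $W$, ${\bf t}_s$. Two structural observations drive every computation and should be recorded first. Since $f(t)=1/(t-t_0+t_1)$, one has the identity $f'(t)=-f(t)^2$, which turns every time derivative into an algebraic expression. Moreover $u$ and $v$ are \emph{affine} in $z$, so $\p_z^2{\bf u}=0$ and the interior viscous term $\nu\,\p_z^2{\bf u}$ in \eqref{eq:mvthm} vanishes identically; and $h=\alpha f(t)$ is independent of $(x,y)$, so $\nabla_{x,y}\eta=0$, while $z_b=z_{b,0}$ and $p^a=p^{a,1}(t)$ carry no horizontal dependence, so $\nabla_{x,y}p=0$ in the horizontal momentum balance. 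I would abbreviate $s:=x\cos^2\theta+y\sin^2\theta$.

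First I would dispatch the three interior equations. Incompressibility \eqref{eq:incomp} follows from $\p_x u=f\cos^2\theta$, $\p_y v=f\sin^2\theta$, $\p_z w=-f$, whose sum is $f(\cos^2\theta+\sin^2\theta)-f=0$. The hydrostatic relation \eqref{eq:mvthm1} is immediate because $p$ is affine in $z$ with $\p_z p=-g$ (the density being normalized to $\rho_0=1$ for these solutions). For the horizontal momentum \eqref{eq:mvthm}, the observations above reduce it to $\p_t{\bf u}+({\bf u}\cdot\nabla_{x,y}){\bf u}+w\,\p_z{\bf u}=0$; here I would use $\p_t u=f'(s-\tfrac{\alpha\beta}{2})=-f^2(s-\tfrac{\alpha\beta}{2})$, the identity $u\,\p_x u+v\,\p_y u=fu$ (valid because $u=v$ and $\cos^2\theta+\sin^2\theta=1$), and $w\,\p_z u=-\beta f(z-z_b)$, and then check that these three contributions cancel after expanding $fu$. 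The $v$-equation follows by the same symmetry.

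Next I would treat the boundary conditions, where the constants get pinned down. The bottom impermeability \eqref{eq:bottom} is trivial since the bottom is flat and $w(t,x,y,z_b)=f(z_b-z_b)=0$. The kinematic condition \eqref{eq:free_surf} collapses, using $\nabla_{x,y}\eta=0$, to $\p_t\eta=w|_{z=\eta}$, and both sides equal $-\alpha f^2$. For the Navier law \eqref{eq:uboundihmf}, with ${\bf n}_b=(0,0,-1)^T$ one has $\p{\bf u}/\p{\bf n}_b=-\p_z{\bf u}|_b=-\beta(1,1)$; equating $\nu\,\p{\bf u}/\p{\bf n}_b=-\kappa{\bf u}|_b$ and evaluating ${\bf u}|_b$ at $z=z_b$ should reproduce (after accounting for the sign convention of the Navier law) the stated $\kappa=2\nu\alpha\beta/\bigl(h[\alpha\beta-2s]\bigr)$, the hypothesis $\alpha\beta>L$ guaranteeing a positive denominator on $[-L/2,L/2]^2$. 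For the free-surface no-stress condition \eqref{eq:uboundihm}, ${\bf n}_s=(0,0,1)^T$ and the tangential part fixes $W{\bf t}_s=\nu\,\p_z\tilde{\bf u}|_\eta$, from which the direction ${\bf t}_s=\tfrac{1}{\sqrt2}(1,1,0)^T$ and the magnitude $W$ are read off; the lateral data on $x,y\in\{-L/2,L/2\}$ are the traces of the explicit fields and are consistent by construction.

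The hard part will be the \emph{vertical} component of the free-surface balance: it is precisely there that the offset $-2\nu f(t)$ in the definition of $p$ is needed, since it reproduces the normal viscous stress $2\nu\,\p_z w=-2\nu f$, so that $-p|_\eta+2\nu\,\p_z w|_\eta=-p^a$ holds. The interior identities are essentially forced once $f'=-f^2$ and $\p_z^2{\bf u}=0$ are in hand, so the only real risk is bookkeeping: the sign and factor conventions in \eqref{eq:uboundihmf}--\eqref{eq:uboundihm} must be tracked attentively to recover the exact $\kappa$ and $W$. Finally, the Euler specialization is immediate: setting $\nu=0$ removes the $-2\nu f$ offset and the viscous and friction terms, the interior system reduces to \eqref{eq:div}--\eqref{eq:u}, and taking $p^a=0$ gives $p(t,x,y,\eta)=0$, which completes the argument.
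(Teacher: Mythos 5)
Your proposal is correct and takes essentially the same approach as the paper: the paper's own proof of Proposition~\ref{prop:sol_NS3d} consists solely of the remark that it ``relies on very simple computations and is not detailed here,'' and the direct substitution you outline---organized around $f'=-f^2$, the affineness of ${\bf u}$ in $x$, $y$ and $z$ (which kills $\partial_z^2{\bf u}$ and the horizontal viscous terms), and $\nabla_{x,y}\eta=0$---is precisely that computation. Your identification of the $-2\nu f(t)$ offset in $p$ as the term matching the normal viscous stress $2\nu\,\partial_z w$ at the free surface is the one genuinely non-obvious point, and your caveat about tracking the sign and normalization conventions in \eqref{eq:uboundihmf}--\eqref{eq:uboundihm} is well placed, since those conventions are exactly where the stated constants $\kappa$ and $W$ must be pinned down.
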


\begin{proof}[Proof of prop.~\ref{prop:sol_NS3d}]
The proof of prop.~\ref{prop:sol_NS3d}
relies on very simple computations and is not detailed here.
\end{proof}

We have performed the simulations for several unstructured meshes of
the geometrical domain $(x,y)\in
[0,5]\times [0,1]$ and an adapted number of layers so that each
3d element of the mesh can be approximatively considered as a regular
polyhedron, the considered meshes have 483 nodes and 3 layers, 700
nodes and 6 layers, 1306 nodes and 10 layers, 2781 nodes and 20 layers.

For $L=2$ m, $\alpha=1$ m.s, $t_0=0$ s, $t_1=0.5$ s, $\beta=2.5$
s$^{-1}$, $\theta=0$, $\nu=0$ m$^2$.s$^{-1}$, $p^{a,1}=0$
m$^2$.s$^{-2}$ on Fig.~\ref{fig:NS_hyd}-{\it (a)}, we have depicted the features of the
analytical solution~-- at time $T=0.5$ second~-- we use for the convergence test. Figure~\ref{fig:NS_hyd}-{\it (b)} gives the convergence curve
towards the analytical solution i.e. the $\mbox{log}(L^2-error)$ of the
water depth~-- at time $T=1$ second~-- versus
$\mbox{log}(h_{a_0}/h_a)$ for the first and second-order schemes and they
are compared to the theoretical order (we denote by $h_a$ the average edge length
and $h_{a_0}$ the average edge length of the coarser mesh). Notice that in this test case,
the errors due to the space and time discretization are combined, this
explains why the theoretical orders of convergence are not exactly
obtained. Moreover, the boundary conditions (inflow prescribed) play
an important role and since their numerical treatment is only at the
first order in space, this also explains the difference between the
theoretical and observed orders of convergence. With the mesh having 2781 nodes, we have tested the influence of the number of layers, see Fig.~\ref{fig:NS_hyd}-{\it (c)}. When the numbers of layers increase, we recover the analytical velocity profile.

\begin{figure}[hbtp]
\begin{center}
\begin{tabular}{ccc}
\includegraphics[width=0.4\textwidth]{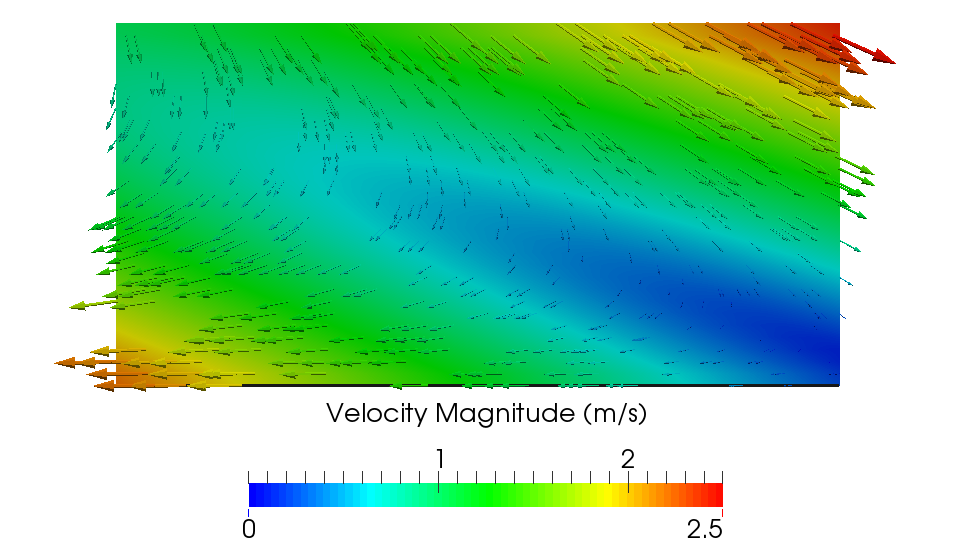}
&
\includegraphics[width=0.28\textwidth]{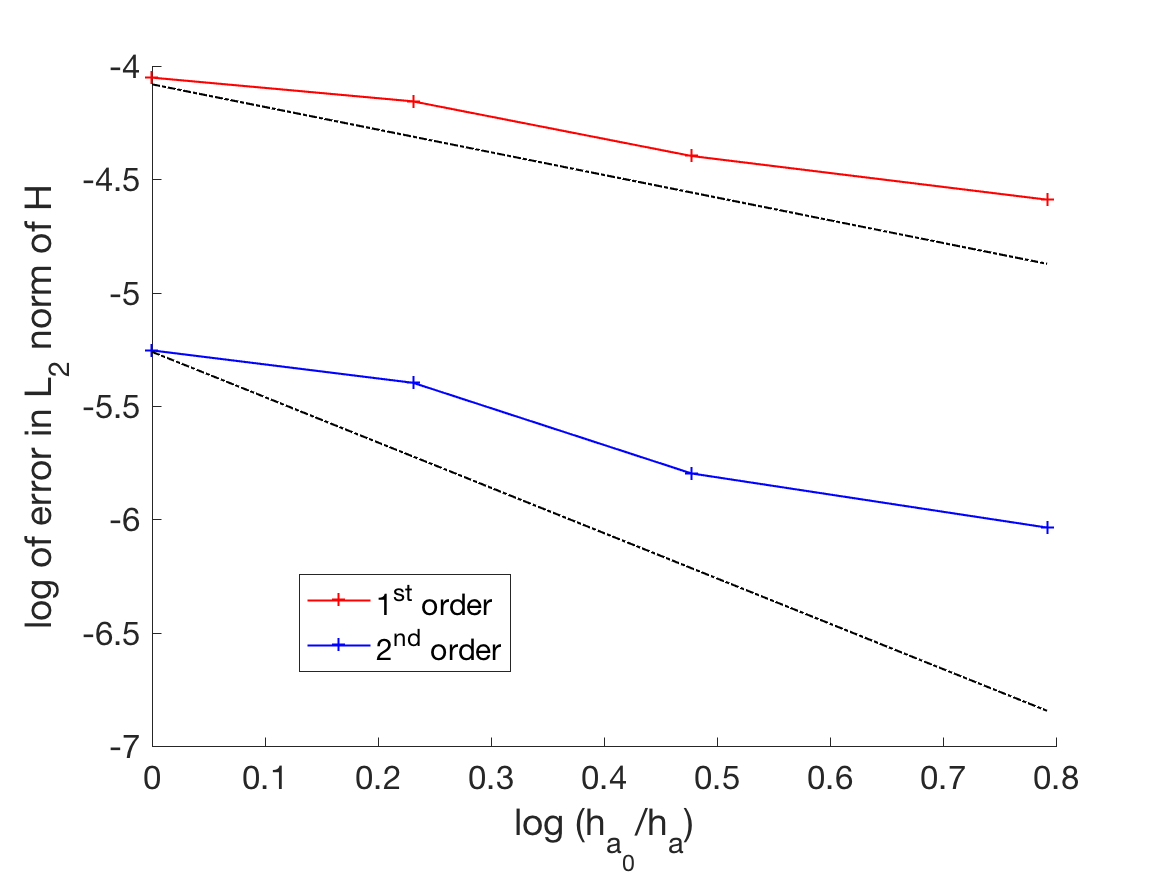}
&
\includegraphics[width=0.28\textwidth]{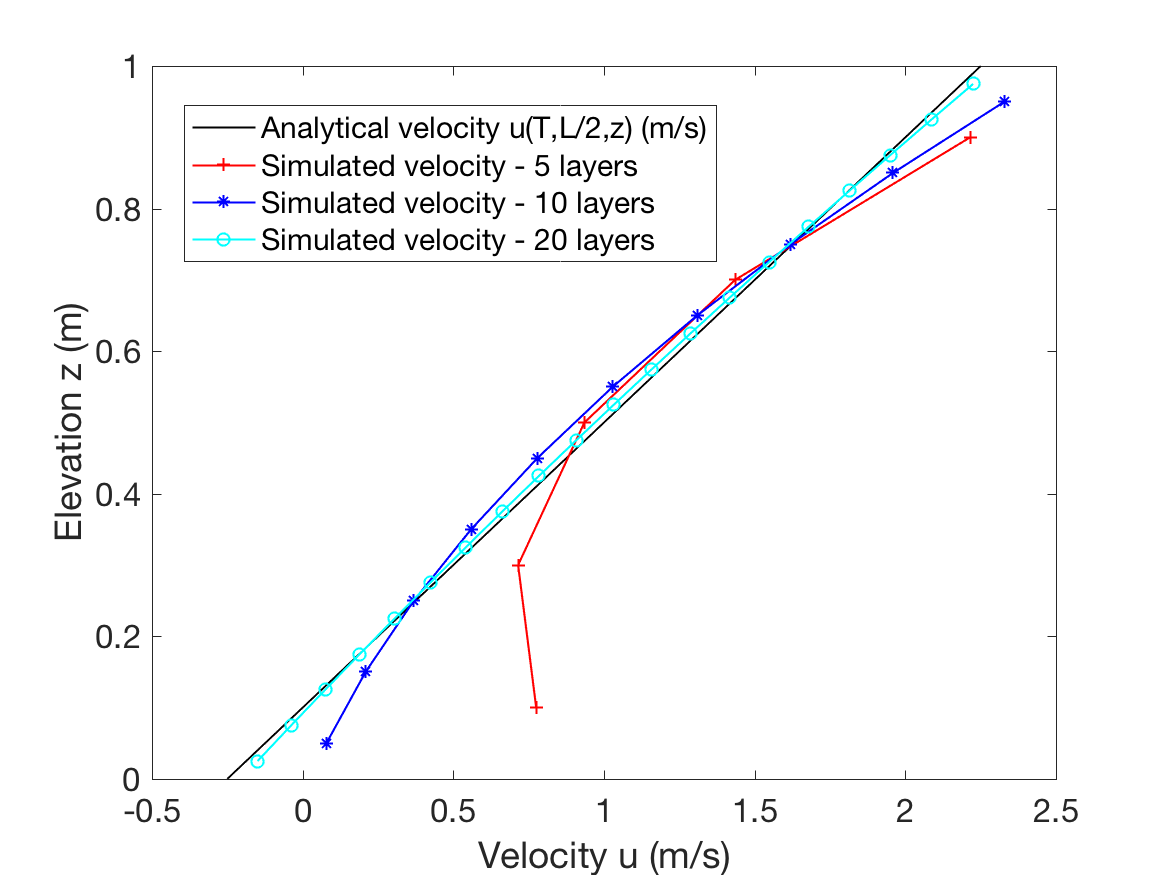}\\
{\it (a)} & {\it (b)} & {\it (c)}\\
\end{tabular}
\caption{Analytical solution given in
prop.~\ref{prop:sol_NS3d}~: {\it (a)} slice of the fluid domain for
$y=0.5$ m and velocity
field at time $T=0.5\!$ s, {\it (b)} convergence curve towards the
reference solution, first order (space and time) and second order extension
 (space and time) schemes. The first and second order theoretical
 curves correspond to the dashed lines. {\it (c)} Analytical horizontal velocity $u$ along $z$ at abscissa $x=L/2$ m and time $T$ and its simulated values for different numbers of layers.}
\label{fig:NS_hyd}
\end{center}
\end{figure}

\subsection{Simulation of a tsunami}
\label{subsec:tsunami}

In this section, we test our discrete model in the case of a real
tsunami propagation for which field measurements are available (free
surface variations recorded by buoys). Even if in such cases, involving long wave propagation, 2d shallow water models can be used instead of 3d description, and we test here the capacity of our model and of the numerical procedure to handle this complex situation.

Simulation of tsunami waves generated by earthquakes is very important in Earth science for hazard assessment and for recovering earthquakes characteristics. Indeed, tsunami waves can be analyzed to recover the earthquake source that generated the tsunami and are now classically used in joint inversion methods. It has been shown that tsunami waves provide strong constraints on the spatial distribution of the source, especially in the case of shallow slip~\cite{gusman}. In some cases, far-field tsunami gauges may help constrain the earthquake source process even though they are affected by the compressibility of the water column and of the Earth~\cite{gusman,watada}.

The 2014/04/01 Iquique earthquake struck off the coast of Chile at 20:46 local time (23:46 UTC), with a
moment magnitude of 8.1. The
epicenter of the earthquake was approximately 95 kilometers (59 mi)
northwest of Iquique, as shown in Fig.~\ref{fig:map_chile}.

We have carried out simulations of the tsunami induced by the
earthquake using
\begin{itemize}
\item[$\bullet$] a topography obtained from the National Oceanic and
  Atmospheric Administration (NOAA, \cite{noaa}) using the ETOPO1
  data (1-arc minute global relief model),
\item[$\bullet$] an unstructured mesh whose dimensions~-- a square of
  2224.2 km$^2$~-- correspond to
  the domain covered by Fig.~\ref{fig:map_chile},
\item[$\bullet$] a source corresponding to the seafloor displacement induced by the earthquake (Fig.~\ref{fig:map_chile}). This source is obtained by computing the 3D final displacements of the seafloor generated by the earthquake coseismic slip. This coseismic slip has been itself retrieved by inversion of numerous geodetic and seismic data, according to the model determined by \cite{iquiquemodel}. The source is activated at time $t_0$, just after the earthquake occurrence ($t_0$ is here 2014/04/01,23h47mn25s)
\end{itemize}
We did not consider here the Coriolis force, the tides and the ocean currents. The results shown in Fig.~\ref{fig:dart} have been
obtained with a mesh containing 545821 nodes and 5 layers (computation time was 35 minutes with a Mac book air 1.7 GHz Intel core i7).  We compare
the numerical solutions ~-- provided by the first order scheme (space and time)
and the second order scheme (space and time)~-- with the DART
measurements (obtained from the NOAA website http://www.ndbc.noaa.gov/dart.shtml). A series of simulations have been performed using several meshes and we present
``converged'' results in the sense that a finer mesh would give the
same results. This is illustrated in Fig.~\ref{fig:dart}-{\it (d)}, where we plot the simulation results obtained with three meshes having respectively 311687 nodes (coarse mesh), 545821 nodes (fine mesh), and 985327 nodes (very fine mesh): the curves corresponding to the fine (cyan) and very fine (blue) curves are very similar.

Fig.~\ref{fig:dart}-{\it (a)},{\it (b)},{\it (c)} shows that the second order scheme significantly improves the results both for the amplitude and phase of the water waves.
The second order scheme is able to very accurately reproduce the shape of the first wave at the closest DART buoy 32401, located at 287 km from the epicenter. The two following peaks in the waveform are quite well reproduced up to about $1.755 \times 10^5$ s. This is also the case at the DART buoy 32402, located 853 km from the epicenter. The arrival time of the first wave is very well reproduced at the three DART buoys, slightly better with the second order scheme. At the most distant buoy 32412 (1650 km from the source), the first order scheme is not able to reproduce the recorded wave. The second order scheme reproduces the first wave quite well but not the rest of the waveform, possibly due to Earth curvature effects that are not taken into account here. Globally, the low frequency content of the signals is better explained by the model than the high frequency fluctuations. These high frequency fluctuations may be related to effects not accounted for here, such as spatio-temporal heterogeneity of the real source, small wavelength fluctuation of the topography, and possibly non-hydrostatic effects \cite{JSM_nhyd,nora1}.

\begin{figure}[hbtp]
\begin{center}
\includegraphics[width=2.5cm]{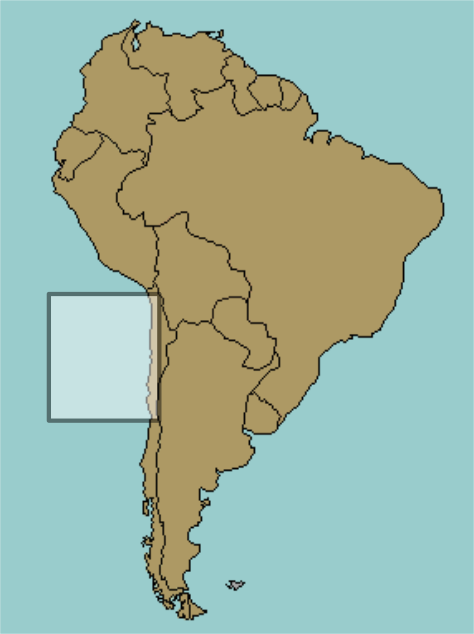}\hspace*{0.5cm}
\includegraphics[width=6cm]{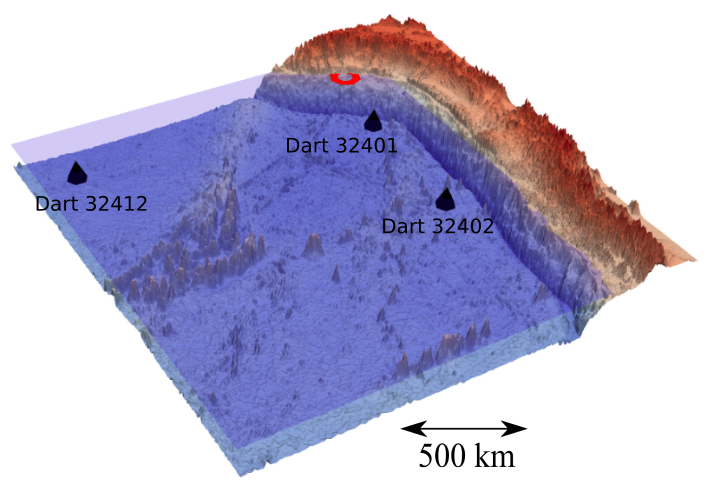}
\hspace*{0.1cm}\includegraphics[width=6cm]{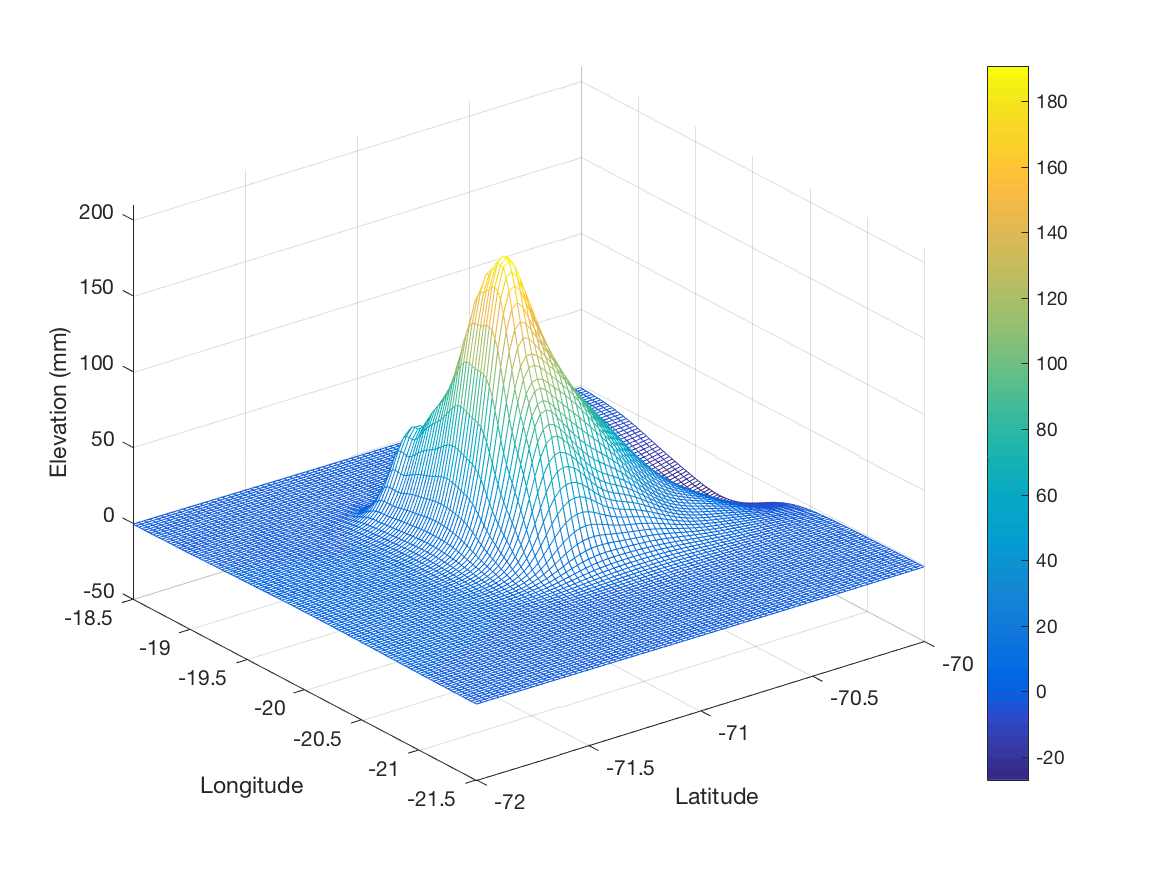}
\end{center}
\caption{(Left) Location of the zone of interest, located offshore Northern Chile ; (Center) Bathymetric map showing the earthquake epicenter (red cone) and the location of the three DART buoys (black boxes); (Right) Vertical displacement (in centimeter) of the topography due to the earthquake. Horizontal displacements (not shown here) are also taken into account in the simulation.}
\label{fig:map_chile}
\end{figure}


\begin{figure}
\begin{center}
\begin{tabular}{cc}
\includegraphics[height=4.8cm,width=7cm]{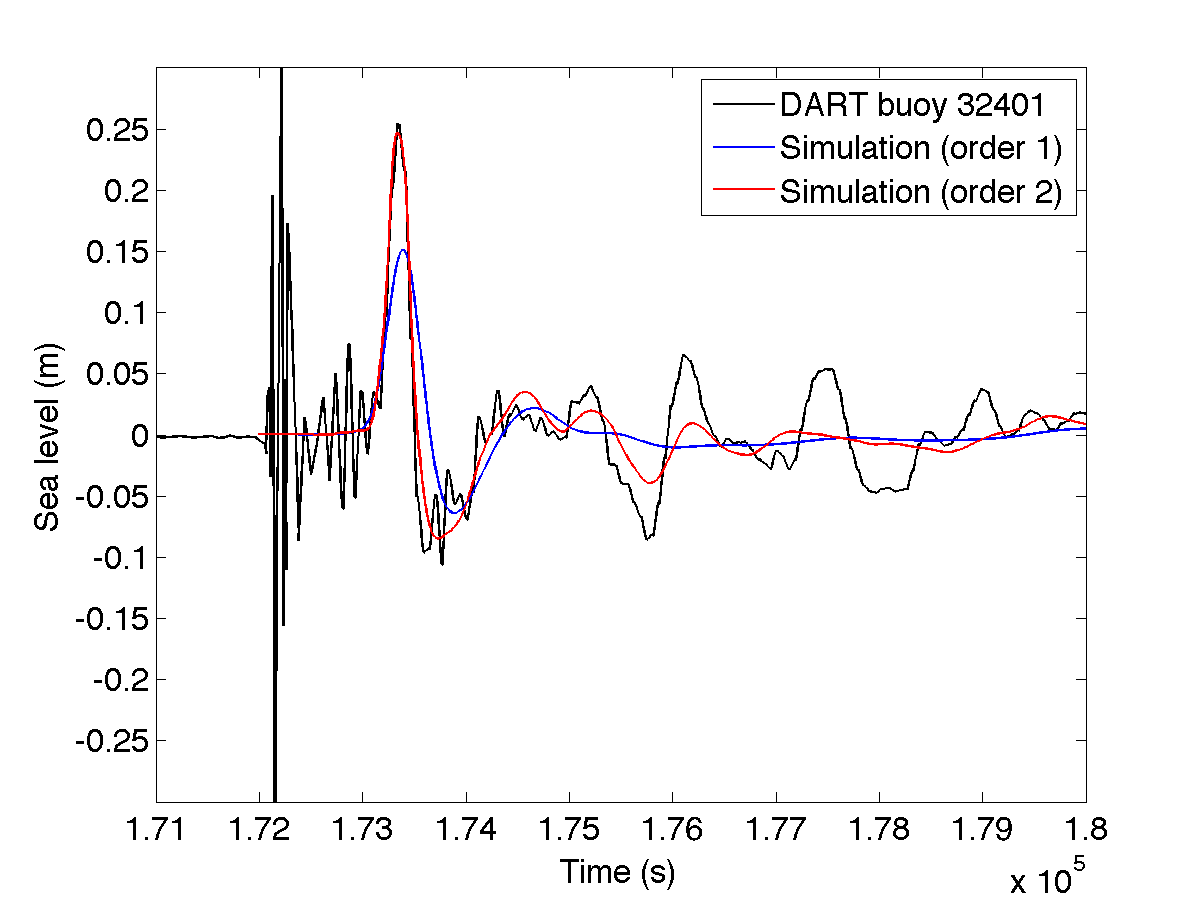}
&
\includegraphics[height=4.8cm,width=7cm]{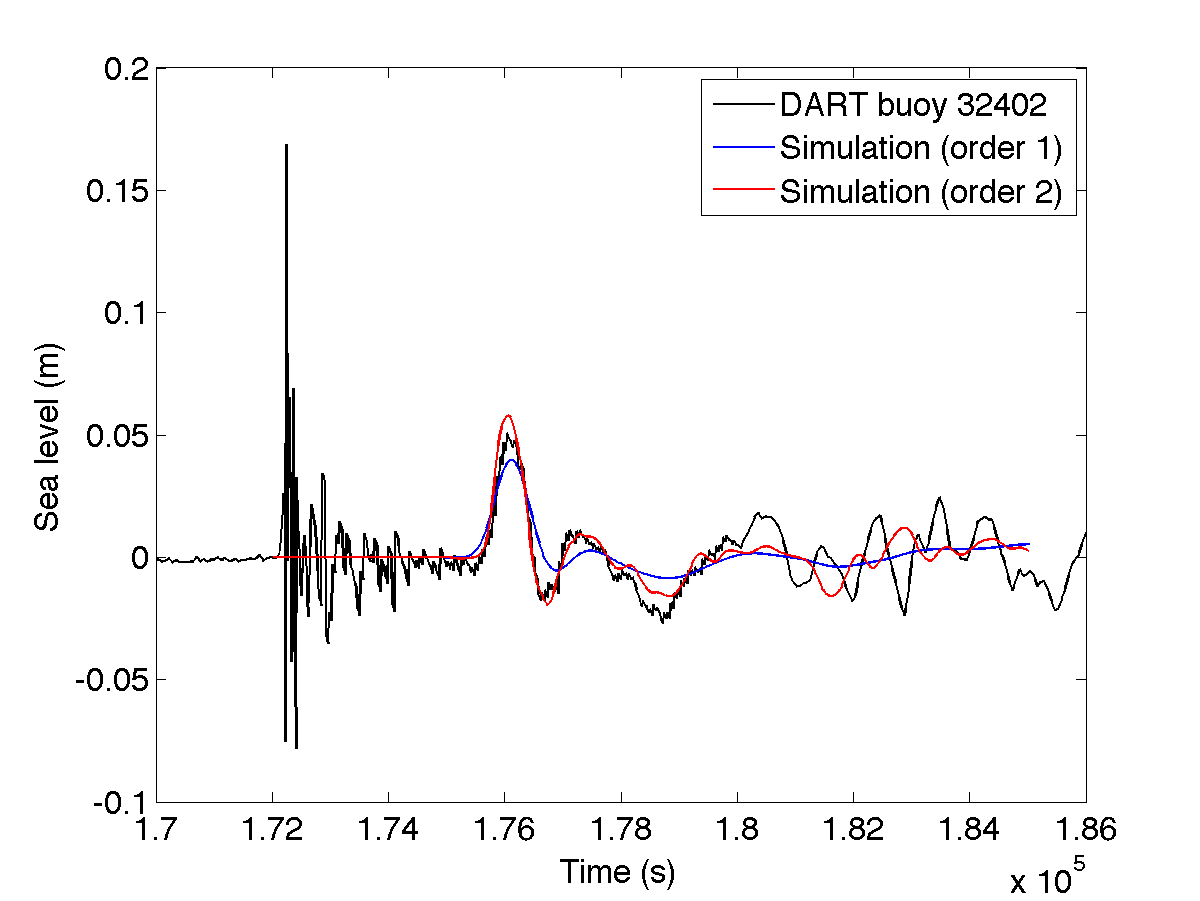}\\
{\it (a)} & {\it (b)}\\
\includegraphics[height=4.8cm,width=7cm]{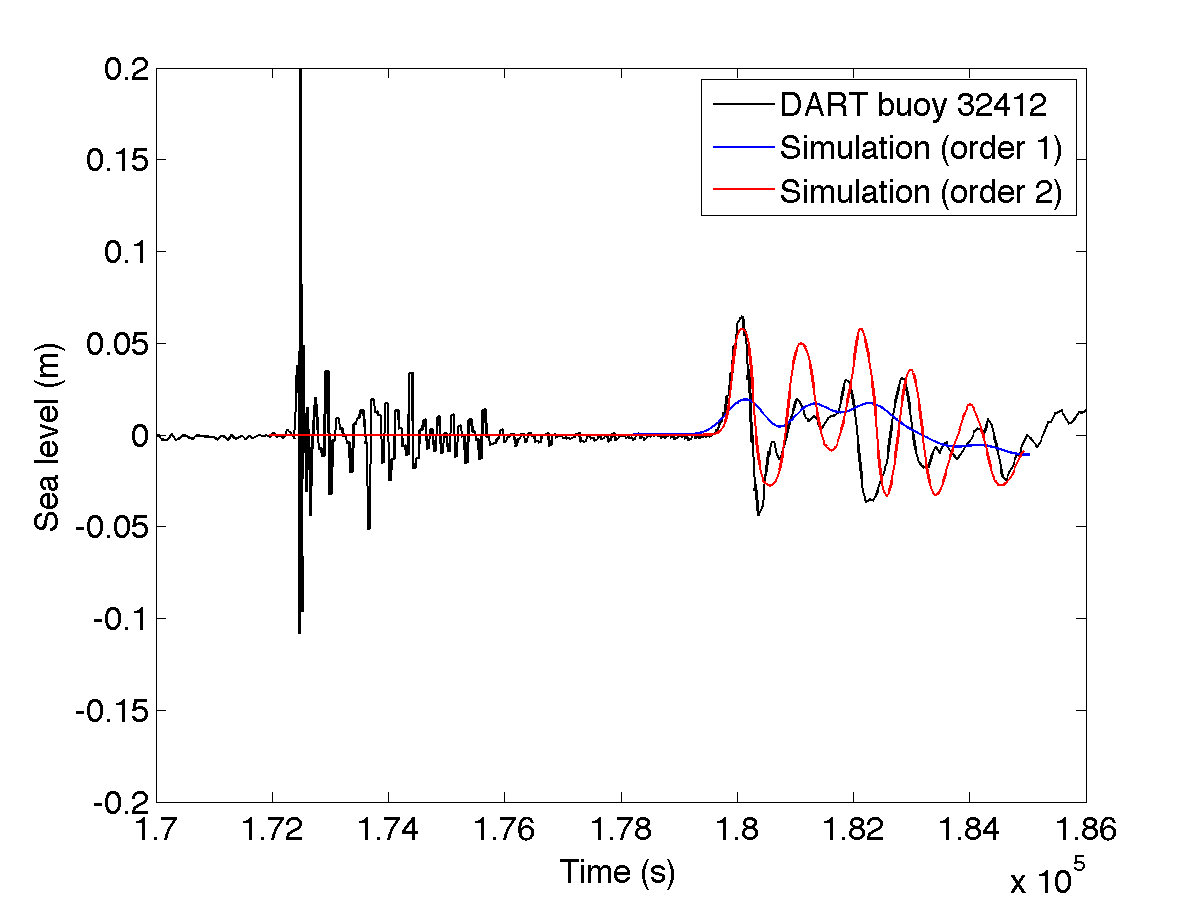}
&
\includegraphics[height=4.8cm,width=7cm]{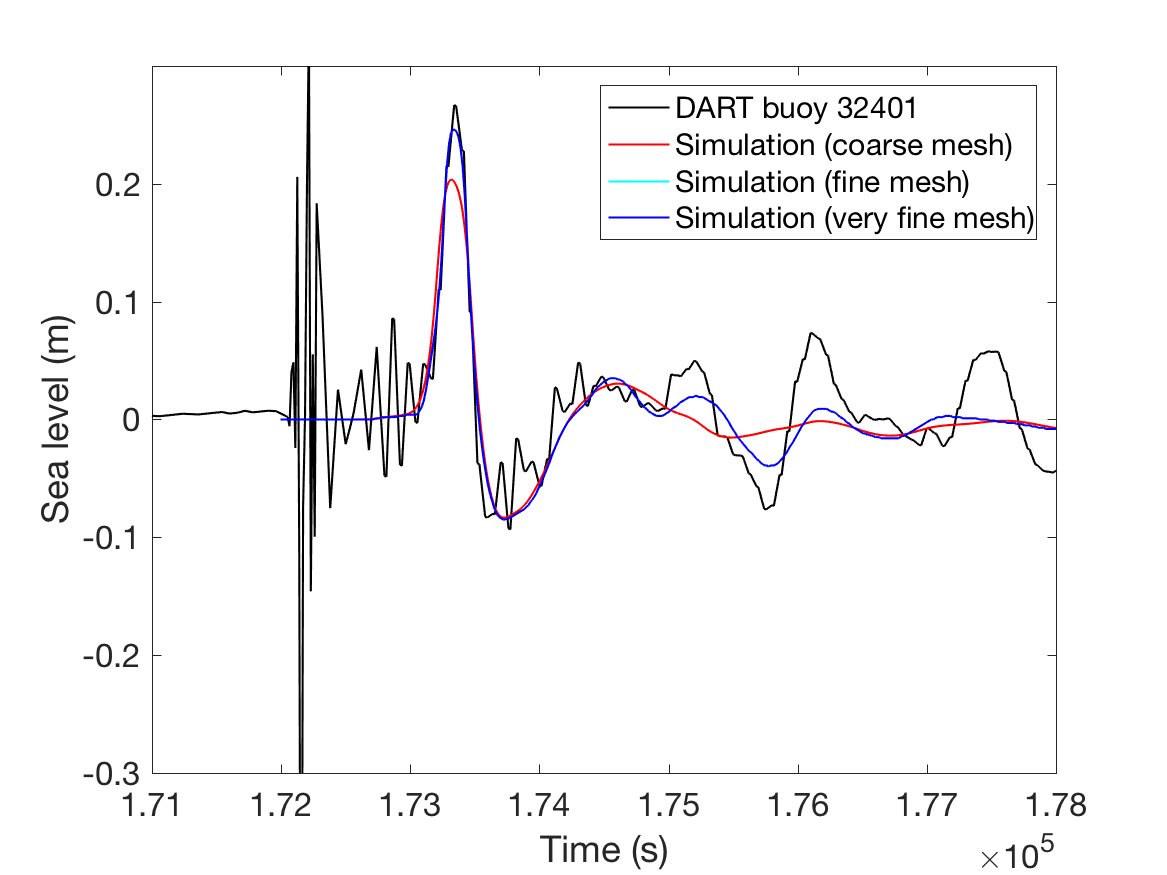}\\
{\it (c)} & {\it (d)}\\
\end{tabular}
\end{center}
\caption{a), b) c) : Comparison between the sea level variations recorded by the
  3 DART buoys and the corresponding simulations (1$^{st}$ and 2$^{nd}$ order 
  schemes). d) Effect of the mesh size on the simulation accuracy for DART buoy 32401. 
  In a), b), c), d), observations have been detided using a low pass 
  Butterworth filter (order 4 and cutoff frequency of 4 hours), 
  and the simulated waveforms have been filtered with the same Butterworth filter.}
\label{fig:dart}
\end{figure}

\section{Conclusion}

In this paper, we have presented a layer-averaged version of the
3d incompressible, hydrostatic Euler and Navier-Stokes systems with free
surface. Based on a kinetic interpretation of the system for the Euler part, we have
derived a stable, robust and efficient numerical scheme in a finite
volume/finite element framework on unstructured meshes. The numerical scheme is endowed with
strong stability properties (domain invariant, well-balancing, wet/dry interfaces treatment,\ldots).

The numerical scheme is successfully validated with analytical solutions and is shown to be applicable to simulate
complex test cases, like a tsunami propagation over a real bathymetry, proving its accuracy and efficiency.

\bigskip

\noindent {\large\bf Acknowledgements} The authors thank Quentin
Bl\'etery for his involvement in the analysis of the 2014 Iquique
earthquake (paragraph~\ref{subsec:tsunami}). We also thank  Rapha\"el
Grandin for its contribution in the computations of the bottom
displacements necessary for the tsunami simulations. This work has
been partially funded by the ERC Contract No. ERC-CG-2013-PE10-617472
SLIDEQUAKES. The authors also acknowledge the Inria Project Lab "Algae in Silico" for its financial support.

\bibliographystyle{vancouver}
\bibliography{boussinesq}

\begin{thebibliography}{10}

\bibitem{saint-venant}
Barr\'e~de Saint-Venant AJC.
\newblock {Th\'eorie du mouvement non permanent des eaux avec applications aux
  crues des rivi\`eres et \`a l'introduction des mar\'ees dans leur lit}.
\newblock C R Acad Sci Paris. 1871;73:147--154.

\bibitem{bouchut}
Bouchut F, Westdickenberg M.
\newblock {Gravity driven shallow water models for arbitrary topography}.
\newblock Comm in Math Sci. 2004;2:359--389.

\bibitem{perthame}
Perthame B.
\newblock Kinetic formulation of conservation laws.
\newblock Oxford University Press; 2002.

\bibitem{JSM_entro}
Audusse E, Bouchut F, Bristeau MO, Sainte-Marie J.
\newblock Kinetic entropy inequality and hydrostatic reconstruction scheme for
  the {S}aint-{V}enant system.
\newblock Math Comp. 2016;85(302):2815--2837.
\newblock Available from: \url{http://dx.doi.org/10.1090/mcom/3099}.

\bibitem{audusse}
Audusse E.
\newblock {A multilayer Saint-Venant model~: Derivation and numerical
  validation}.
\newblock Discrete Contin Dyn Syst Ser B. 2005;5(2):189--214.

\bibitem{bristeau3}
Audusse E, Bristeau MO.
\newblock {Finite-volume solvers for a multilayer Saint-Venant system}.
\newblock Int J Appl Math Comput Sci. 2007;17(3):311--319.

\bibitem{bristeau2}
Audusse E, Bristeau MO, Decoene A.
\newblock Numerical simulations of 3d free surface flows by a multilayer
  {Saint-Venant} model.
\newblock Internat J Numer Methods Fluids. 2008;56(3):331--350.

\bibitem{bouchut1}
Bouchut F, Morales~de Luna T.
\newblock An entropy satisfying scheme for two-layer shallow water equations
  with uncoupled treatment.
\newblock M2AN Math Model Numer Anal. 2008;42:683--698.

\bibitem{pares}
Castro MJ, Garc\'{\i}a-Rodr\'{\i}guez JA, Gonz\'{a}lez-Vida JM, Mac\'{\i}as J,
  Par\'{e}s C, V\'{a}zquez-Cend\'{o}n ME.
\newblock Numerical simulation of two-layer shallow water flows through
  channels with irregular geometry.
\newblock J Comput Phys. 2004;195(1):202--235.

\bibitem{pares1}
Castro MJ, Mac\'{\i}as J, Par\'{e}s C.
\newblock A {Q}-SCHEME FOR A CLASS OF SYSTEMS OF COUPLED CONSERVATION LAWS WITH
  SOURCE TERM. {A}PPLICATION TO A TWO-LAYER {1-D} SHALLOW WATER SYSTEM.
\newblock M2AN Math Model Numer Anal. 2001;35(1):107--127.

\bibitem{ovsyannikov}
Ovsyannikov LV.
\newblock Two-layer shallow water models.
\newblock Prikl Mekh Tekh Fiz. 1979;2:3--14.

\bibitem{vreug}
Vreugdenhil CB.
\newblock Two-layer Shallow-Water Flow in Two Dimensions, a Numerical Study.
\newblock J Comput Phys. 1979;33:169--184.

\bibitem{multi-bz}
Bouchut F, Zeitlin V.
\newblock A robust well-balanced scheme for multi-layer shallow water
  equations.
\newblock Discrete Contin Dyn Syst Ser B. 2010;13:739--758.

\bibitem{nieto_chacon}
Fern\'andez-Nieto ED, Kon\'e EH, Chac\'on~Rebollo T.
\newblock {A multilayer method for the hydrostatic Navier-Stokes equations: a
  particular weak solution}.
\newblock Journal of Scientific Computing. 2014;60(2):408--437.
\newblock Available from: \url{http://dx.doi.org/10.1007/s10915-013-9802-0}.

\bibitem{JSM_M2AN}
Audusse E, Bristeau MO, Perthame B, Sainte-Marie J.
\newblock {A multilayer Saint-Venant system with mass exchanges for Shallow
  Water flows. Derivation and numerical validation}.
\newblock ESAIM: M2AN. 2011;45:169--200.
\newblock Available from: \url{http://dx.doi.org/10.1051/m2an/2010036}.

\bibitem{JSM_JCP}
Audusse E, Bristeau MO, Pelanti M, Sainte-Marie J.
\newblock {Approximation of the hydrostatic Navier-Stokes system for density
  stratified flows by a multilayer model. Kinetic interpretation and numerical
  validation.}
\newblock J Comp Phys. 2011;230:3453--3478.

\bibitem{JSM_M3AS}
Sainte-Marie J.
\newblock {Vertically averaged models for the free surface Euler system.
  Derivation and kinetic interpretation}.
\newblock Math Models Methods Appl Sci (M3AS). 2011;21(3):459--490.

\bibitem{BDGSM}
Bristeau MO, Di-Martino B, Guichard C, Sainte-Marie J.
\newblock {Layer-averaged Euler and Navier-Stokes equations}.
\newblock Commun Math Sci. 2017 Jun;15(5):1221--1246.
\newblock Available from: \url{https://hal.inria.fr/hal-01202042}.

\bibitem{nieto_diaz_mangeney_reina}
Fern\'andez-Nieto ED, Garres-D\`ias G, Mangeney A, Narbona-Reina G.
\newblock {A multilayer shallow model for dry granular flows with the
  $\mu$(I)-rheology: Application to granular collapse on erodible beds}.
\newblock Journal of Fluid Mechanics. 2016;.

\bibitem{casulli}
Casulli V.
\newblock A semi-implicit numerical method for the free-surface
  {N}avier-{S}tokes equations.
\newblock Internat J Numer Methods Fluids. 2014;74(8):605--622.
\newblock Available from: \url{http://dx.doi.org/10.1002/fld.3867}.

\bibitem{hervo}
Hervouet JM.
\newblock Hydrodynamics of Free Surface Flows: Modelling with the Finite
  Element Method.
\newblock Wiley; 2007.

\bibitem{ns_entro}
Audusse E, Bristeau MO, Sainte-Marie J.
\newblock {Kinetic entropy for layer-averaged hydrostatic Navier-Stokes
  equations}; 2017.
\newblock {\it submitted}.

\bibitem{freshkiss3d}
{Freshkiss3d} home page; 2017.
\newblock
  \href{http://freshkiss3d.gforge.inria.fr}{http://freshkiss3d.gforge.inria.fr}.

\bibitem{lions}
Lions PL.
\newblock {Mathematical Topics in Fluid Mechanics. Vol.~1: Incompressible
  models.}
\newblock Oxford University Press; 1996.

\bibitem{brenier}
Brenier Y.
\newblock Homogeneous hydrostatic flows with convex velocity profiles.
\newblock Nonlinearity. 1999;12(3):495--512.

\bibitem{grenier}
Grenier E.
\newblock On the derivation of homogeneous hydrostatic equations.
\newblock ESAIM: M2AN. 1999;33(5):965--970.

\bibitem{masmoudi}
Masmoudi N, Wong T.
\newblock On the {Hs} Theory of Hydrostatic {E}uler Equations.
\newblock Archive for Rational Mechanics and Analysis. 2012;204(1):231--271.
\newblock Available from: \url{http://dx.doi.org/10.1007/s00205-011-0485-0}.

\bibitem{bresch_lemoine}
Bresch D, Kazhikhov A, Lemoine J.
\newblock On the two-dimensional hydrostatic {N}avier-{S}tokes equations.
\newblock SIAM J Math Anal. 2004/05;36(3):796--814.
\newblock Available from: \url{https://doi.org/10.1137/S0036141003422242}.

\bibitem{simeoni}
Perthame B, Simeoni C.
\newblock {A kinetic scheme for the Saint-Venant system with a source term}.
\newblock Calcolo. 2001;38(4):201--231.

\bibitem{BGKcons}
Bouchut F.
\newblock Construction of {BGK} models with a family of kinetic entropies for a
  given system of conservation laws.
\newblock J Stat Phys. 1999;95:113--170.

\bibitem{bouchut_BGK}
Bouchut F.
\newblock Entropy satisfying flux vector splittings and kinetic {BGK} models.
\newblock Numer Math. 2003;94:623--672.

\bibitem{BB}
Berthelin F, Bouchut F.
\newblock Relaxation to isentropic gas dynamics for a {BGK} system with single
  kinetic entropy.
\newblock Methods Appl Anal. 2002;9(2):313--327.
\newblock Available from: \url{http://dx.doi.org/10.4310/MAA.2002.v9.n2.a7}.

\bibitem{bristeau}
Audusse E, Bristeau MO.
\newblock A well-balanced positivity preserving second-order scheme for
  {S}hallow {W}ater flows on unstructured meshes.
\newblock J Comput Phys. 2005;206(1):311--333.

\bibitem{bouchut_book}
Bouchut F.
\newblock Nonlinear stability of finite volume methods for hyperbolic
  conservation laws and well-balanced schemes for sources.
\newblock Birkh\"auser; 2004.

\bibitem{bristeau1}
Audusse E, Bouchut F, Bristeau MO, Klein R, Perthame B.
\newblock A Fast and Stable Well-Balanced Scheme with Hydrostatic
  Reconstruction for {Shallow Water} Flows.
\newblock SIAM J Sci Comput. 2004;25(6):2050--2065.

\bibitem{coussin}
{B}risteau MO, {C}oussin B.
\newblock {B}oundary {C}onditions for the {S}hallow {W}ater {E}quations solved
  by {K}inetic {S}chemes.
\newblock INRIA; 2001. {RR}-4282.
\newblock Available from: \url{http://hal.inria.fr/inria-00072305/en/}.

\bibitem{VL}
Van~Leer B.
\newblock Towards the ultimate conservative difference scheme. {V}. {A}
  second-order sequel to {G}odunov's method.
\newblock J Comput Phys. 1997;135(2):227--248.
\newblock With an introduction by Ch. Hirsch, Commemoration of the 30th
  anniversary \{of J. Comput. Phys.\}.
\newblock Available from: \url{http://dx.doi.org/10.1006/jcph.1997.5757}.

\bibitem{heun}
Bouchut F.
\newblock An introduction to finite volume methods for hyperbolic conservation
  laws.
\newblock ESAIM Proc. 2004;15:107--127.

\bibitem{JSM_ANALYTIC}
Boulanger AC, Sainte-Marie J.
\newblock Analytical solutions for the free surface hydrostatic {E}uler
  equations.
\newblock Commun Math Sci. 2013;11(4):993--1010.

\bibitem{sol_anal_NS}
Bristeau MO, DI~Martino B, Mangeney A, Sainte-Marie J, Souill{\'e} F.
\newblock {Various analytical solutions for the incompressible Euler and
  Navier-Stokes systems with free surface}; 2018.
\newblock Working paper or preprint.
\newblock Available from: \url{https://hal.archives-ouvertes.fr/hal-01831622}.

\bibitem{thacker}
Thacker WC.
\newblock Some exact solutions to the non-linear shallow-water wave equations.
\newblock J Fluid Mech. 1981;107:499--508.

\bibitem{gusman}
Gusman A, Murotani S, Satake K, Heidarzadeh M, Gunawan E, Watada S, et~al.
\newblock Fault slip distribution of the 2014 Iquique, Chile, earthquake
  estimated from ocean-wide tsunami waveforms and GPS data.
\newblock Geophysical Research Letters. 2015;42(4):1053--1060.
\newblock 2014GL062604.
\newblock Available from: \url{http://dx.doi.org/10.1002/2014GL062604}.

\bibitem{watada}
Watada S, Kusumoto S, Satake K.
\newblock Traveltime delay and initial phase reversal of distant tsunamis
  coupled with the self-gravitating elastic Earth.
\newblock Journal of Geophysical Research: Solid Earth. 2014;119(5):4287--4310.
\newblock 2013JB010841.
\newblock Available from: \url{http://dx.doi.org/10.1002/2013JB010841}.

\bibitem{noaa}
{NOAA} home page; 2017.
\newblock
  \href{https://www.ngdc.noaa.gov/mgg/global/global.html}{https://www.ngdc.noaa.gov/mgg/global/global.html}.

\bibitem{iquiquemodel}
Vall\'ee M, Grandin R, Ruiz S, Delouis B, Vigny C, Rivera E, et~al.
\newblock Complex rupture of an apparently simple asperity during the
  2014/04/01 Pisagua earthquake (Northern Chile, Mw=8.1).
\newblock In: EGU General Assembly Conference Abstracts. vol. 18, EGU2016-8660;
  2016. .

\bibitem{JSM_nhyd}
Bristeau MO, Mangeney A, Sainte-Marie J, Seguin N.
\newblock An energy-consistent depth-averaged Euler system: Derivation and
  properties.
\newblock Discrete and Continuous Dynamical Systems - Series B.
  2015;20(4):961--988.
\newblock Available from:
  \url{http://aimsciences.org/journals/displayArticlesnew.jsp?paperID=10801}.

\bibitem{nora1}
A\"ssiouene N, Bristeau MO, Godlewski E, Sainte-Marie J.
\newblock {A robust and stable numerical scheme for a depth-averaged Euler
  system - https://hal.inria.fr/hal-01162109}; 2015.
\newblock Available from: \url{https://hal.inria.fr/hal-01162109}.

\end{thebibliography}

\appendix
\label{app}

\noindent {\large\bf Appendix A}

\begin{proof}[Proof of prop.~\ref{prop:NS_mc}]
Using the results of prop.~\ref{thm:model_ml}, it remains to obtain
the expression for the layer-averaged viscous terms.

The layer-averaging of the viscous terms appearing in~\eqref{eq:mvthm}
gives
\begin{eqnarray}
\int_{z_{\alpha+1/2}}^{z_{\alpha+1/2}}\left( \frac{\partial 
    \Sigma_{xx}}{\partial x} + \frac{\partial  \Sigma_{xy}}{\partial y}  + \nu\frac{\partial^2 u}{\partial z^2}  \right) dz
& = & \frac{\partial}{\partial x}\left( h_\alpha \Sigma_{xx,\alpha}\right) + \frac{\partial}{\partial y}\left(
 h_\alpha  \Sigma_{xy,_\alpha}\right) \nonumber\\
& & +\nu\left.\frac{\partial u}{\partial z}\right|_{\alpha+1/2} - \nu\left.\frac{\partial u}{\partial z}\right|_{\alpha-1/2}\nonumber\\
& & - \frac{\partial z_{\alpha+1/2}}{\partial x}\Sigma_{xx,\alpha+1/2}
- \frac{\partial z_{\alpha+1/2}}{\partial y} \Sigma_{xy,\alpha+1/2} \nonumber\\
& & +
\frac{\partial z_{\alpha-1/2}}{\partial
  x}  \Sigma_{xx,\alpha-1/2} +
\frac{\partial z_{\alpha-1/2}}{\partial y} \Sigma_{xy,\alpha-1/2},
\label{eq:sigma_xx1}
\end{eqnarray}
with
\begin{eqnarray*}
h_\alpha \Sigma_{xx,\alpha} & = &
\int_{z_{\alpha-1/2}}^{z_{\alpha-1/2}} \Sigma_{xx} dz.
\end{eqnarray*}
The boundary conditions at the bottom~\eqref{eq:uboundihmf} and at the
free surface~\eqref{eq:uboundihm} imply that in~\eqref{eq:sigma_xx1}
$$ \nu\left.\frac{\partial u}{\partial z}\right|_{N+1/2}- \frac{\partial \eta}{\partial x}\Sigma_{xx,N+1/2}
- \frac{\partial \eta}{\partial y} \Sigma_{xy,N+1/2} = 0,$$
and
$$\nu\left.\frac{\partial u}{\partial z}\right|_{1/2}- \frac{\partial z_b}{\partial x}\Sigma_{xx,1/2}
- \frac{\partial z_b}{\partial y} \Sigma_{xy,1/2} = \kappa  u_1.$$
The above expression for $\Sigma_{xx,\alpha}$ and relation~\eqref{eq:sigma_xx1} have been obtained using
the following formal computation
\begin{eqnarray*}
\int_{z_{\alpha+1/2}}^{z_{\alpha+1/2}} \frac{\partial 
  \Sigma_{xx}}{\partial x} dz & = & \frac{\partial}{\partial x}
\int_{z_{\alpha+1/2}}^{z_{\alpha+1/2}} \Sigma_{xx} dz - \frac{\partial  z_{\alpha+1/2}}{\partial
    x}  \Sigma_{xx,\alpha+1/2} +
  \frac{\partial  z_{\alpha-1/2}}{\partial x} \Sigma_{xx,\alpha-1/2}\\
& = & \frac{\partial}{\partial x} \left( h_\alpha \Sigma_{xx,\alpha}\right) - \frac{\partial  z_{\alpha+1/2}}{\partial
    x}\Sigma_{xx,\alpha+1/2} +
  \frac{\partial  z_{\alpha-1/2}}{\partial x} \Sigma_{xx,\alpha-1/2}.
\end{eqnarray*}
The definitions~\eqref{eq:sigma_mxx},\eqref{eq:sigma_mxy} are
motivated by the following computation
\begin{eqnarray*}
\frac{h_{\alpha+1} + h_\alpha}{2} \Sigma_{xx,\alpha+1/2} & = & \nu_{\alpha+1/2}\int_{z_{\alpha}}^{z_{\alpha+1}}\frac{\partial u}{\partial x} dz \\
& = & \nu_{\alpha+1/2}\frac{\partial }{\partial x}\int_{z_{\alpha}}^{z_{\alpha+1}}
u dz - \nu_{\alpha+1/2}\frac{\partial  z_{\alpha+1}}{\partial x}
u_{\alpha+1} +\nu_{\alpha+1/2}\frac{\partial
  z_{\alpha}}{\partial x} u_{\alpha} \\
& = & \nu_{\alpha+1/2}\frac{\partial}{\partial x} \left(
  \frac{h_\alpha}{2} u_\alpha + \frac{h_{\alpha+1}}{2} u_{\alpha+1}\right)
- \nu_{\alpha+1/2}\frac{\partial  z_{\alpha+1}}{\partial x}
u_{\alpha+1} + \nu_{\alpha+1/2}\frac{\partial
  z_{\alpha}}{\partial x} u_{\alpha}\nonumber\\
& = & \nu_{\alpha+1/2}\left(
  \frac{h_\alpha}{2} \frac{\partial u_\alpha}{\partial x} + \frac{h_{\alpha+1}}{2} \frac{\partial u_{\alpha+1}}{\partial x} \right) -\nu_{\alpha+1/2}\frac{\partial  z_{\alpha+1/2}}{\partial x}
(u_{\alpha+1} - u_\alpha).
\end{eqnarray*}

In order to prove the energy balance~\eqref{eq:energy_glol_ns} we use
the results of prop.~\ref{prop:energy_bal} obtained for the layer-averaged Euler
system and it remains to consider the viscous and frictions terms
multiplied by ${\bf u}_\alpha$.

Let us define ${\bf R}_\alpha$
\begin{multline*}
{\bf R}_\alpha = \begin{pmatrix} R_{x,\alpha}\\
  R_{y,\alpha} \end{pmatrix} =
\nabla_{x,y} .
\bigl( h_\alpha  {\bf \Sigma}_\alpha \bigr)
 - {\bf \Sigma}_{\alpha+1/2} \nabla_{x,y} z_{\alpha+1/2}+ {\bf
   \Sigma}_{\alpha-1/2} \nabla_{x,y} z_{\alpha-1/2}\nonumber\\
+2\nu_{\alpha+1/2} \frac{{\bf u}_{{\alpha}+1}-{\bf
    u}_{\alpha}}{h_{{\alpha}+1}+h_{\alpha}} - 2\nu_{\alpha-1/2}
\frac{{\bf u}_{\alpha}-{\bf
    u}_{{\alpha}-1}}{h_{\alpha}+h_{{\alpha}-1}} -\kappa_{\alpha}{\bf
  u}_{\alpha}.
\end{multline*}
We write
\begin{eqnarray*}
R_{x,\alpha} u_\alpha & = & \frac{\partial
  \bigl( u_\alpha h_\alpha \Sigma_{xx,\alpha}\bigr)}{\partial x} + \frac{\partial
  \bigl( u_\alpha h_\alpha \Sigma_{xy,\alpha}\bigr)}{\partial y}\\
& & +\nu_{\alpha+1/2}
\frac{u_{{\alpha}+1}+u_{\alpha}}{2}\frac{u_{{\alpha}+1}-u_{\alpha}}{h_{{\alpha}+1}+h_{\alpha}}
-\nu_{\alpha-1/2} \frac{u_{\alpha}+u_{\alpha-1}}{2}\frac{u_{\alpha}-u_{\alpha-1}}{h_{\alpha}+h_{\alpha-1}}\\
& &
 -\frac{\partial z_{\alpha+1/2}}{\partial x}\Sigma_{xx,\alpha+1/2}
 u_\alpha -\frac{\partial z_{\alpha+1/2}}{\partial y}\Sigma_{xy,\alpha+1/2}
 u_\alpha \nonumber\\
& &
 +\frac{\partial z_{\alpha-1/2}}{\partial x}\Sigma_{xx,\alpha-1/2}
 u_\alpha +\frac{\partial z_{\alpha-1/2}}{\partial y}\Sigma_{xy,\alpha-1/2}
 u_\alpha \nonumber\\
& & -h_\alpha \Sigma_{xx,\alpha}\frac{\partial u_\alpha}{\partial x} - h_\alpha
\Sigma_{xy,\alpha}\frac{\partial u_\alpha}{\partial y}\\
& & -\nu_{\alpha+1/2} \frac{(u_{\alpha+1}- u_{\alpha})^2}{h_{{\alpha}+1}+h_{\alpha}} - \nu_{\alpha-1/2}
\frac{(u_{\alpha}- u_{\alpha-1})^2}{h_{\alpha}+h_{{\alpha}-1}} -\kappa_{\alpha}
  u^2_{\alpha},
\end{eqnarray*}
and using the closure relation~\eqref{eq:sigma_m}, it comes for $\alpha=2,\ldots,N-1$
\begin{eqnarray*}
R_{x,\alpha} u_\alpha & = & \frac{\partial
  \bigl( u_\alpha h_\alpha \Sigma_{xx,\alpha}\bigr)}{\partial x} + \frac{\partial
  \bigl( u_\alpha h_\alpha \Sigma_{xy,\alpha}\bigr)}{\partial y}\\
& & +\nu_{\alpha+1/2}
\frac{u_{{\alpha}+1}+u_{\alpha}}{2}\frac{u_{{\alpha}+1}-u_{\alpha}}{h_{{\alpha}+1}+h_{\alpha}}
-\nu_{\alpha+1/2} \frac{u_{\alpha}+u_{\alpha-1}}{2}\frac{u_{\alpha}-u_{\alpha-1}}{h_{\alpha}+h_{\alpha-1}}\\
& & -\Sigma_{xx,\alpha+1/2}\left(  \frac{h_\alpha}{2} \frac{\partial
    u_\alpha}{\partial x} + \frac{\partial z_{\alpha+1/2}}{\partial x}
 u_\alpha \right) -\Sigma_{xy,\alpha+1/2}\left(  \frac{h_\alpha}{2}
 \frac{\partial u_\alpha}{\partial y}  + \frac{\partial z_{\alpha+1/2}}{\partial y}
 u_\alpha \right)\\
& & +\Sigma_{xx,\alpha-1/2}\left(  -\frac{h_\alpha}{2} \frac{\partial
    u_\alpha}{\partial x} + \frac{\partial z_{\alpha-1/2}}{\partial x}
 \frac{u_\alpha}{2} \right) + \Sigma_{xy,\alpha-1/2}\left(  -\frac{h_\alpha}{2} \frac{\partial
    u_\alpha}{\partial y} + \frac{\partial z_{\alpha-1/2}}{\partial y}
 \frac{u_\alpha}{2} \right)\\
& & -\nu_{\alpha+1/2} \frac{(u_{\alpha+1}- u_{\alpha})^2}{h_{{\alpha}+1}+h_{\alpha}} - \nu_{\alpha-1/2}
\frac{(u_{\alpha}- u_{\alpha-1})^2}{h_{\alpha}+h_{{\alpha}-1}} -\kappa_{\alpha}
  u^2_{\alpha}.
\end{eqnarray*}
An analogous relation can be easily obtained for $R_{y,\alpha}
v_\alpha$ and computing the sum over the layers of the obtained
quantity, we get
\begin{eqnarray*}
\sum_{\alpha=1}^N {\bf R}_{\alpha} . {\bf u}_\alpha  & = &
\sum_{\alpha=1}^N \left( \frac{\partial
  \bigl( u_\alpha h_\alpha \Sigma_{xx,\alpha}\bigr)}{\partial x} + \frac{\partial
  \bigl( u_\alpha h_\alpha \Sigma_{xy,\alpha}\bigr)}{\partial y} + \frac{\partial
  \bigl( v_\alpha h_\alpha \Sigma_{yx,\alpha}\bigr)}{\partial x} + \frac{\partial
  \bigl( v_\alpha h_\alpha \Sigma_{yy,\alpha}\bigr)}{\partial y}
\right)\\
& & -\sum_{\alpha=1}^{N-1} \frac{h_{\alpha+1} + h_\alpha}{2\nu}\left(\Sigma_{xx,\alpha+1/2}^2+\Sigma_{xy,\alpha+1/2}^2+\Sigma_{yx,\alpha+1/2}^2+\Sigma_{yy,\alpha+1/2}^2\right)\\
& &  -\sum_{\alpha=1}^{N-1} 2\nu \frac{|{\bf u}_{{\alpha}+1}-{\bf
    u}_{\alpha}|^2}{h_{{\alpha}+1}+h_{\alpha}}  -\kappa |{\bf u}_1|^2,
\end{eqnarray*}
proving the result.
\end{proof}

\noindent {\large\bf Appendix B}

Simple computations give
\begin{eqnarray}
F_h & =  & h \int_{\{y_1 \geq - \frac{\tilde{u}}{c}\} \times \R} (\tilde{u}+cy_1)\chi_0(y_1,y_2)
dy_1 dy_2 \nonumber\\
& = & h \int_{y_1= - \frac{\tilde{u}}{c}}^{+\infty}  (\tilde{u}+cy_1)\left(
\int_{-\infty}^{+\infty} \chi_0(y_1,y_2)
dy_2 \right) dy_1
=  \frac{h}{\pi} \int_{y_1= - \frac{\tilde{u}}{c}}^{+\infty}  (\tilde{u}+cy_1)
\sqrt{1-\frac{y_1^2}{4}} dy_1,\label{eq:flux_h_expr}\\
F_{hu} & =  & h \int_{\{y_1 \geq - \frac{\tilde{u}}{c}\} \times \R} (\tilde{u}+cy_1) (u+cn_xy_1)\chi_0(y_1,y_2)
dy_1 dy_2 \nonumber\\
& = & \frac{h}{\pi} \int_{y_1= - \frac{\tilde{u}}{c}}^{+\infty}  (\tilde{u}+cy_1) (u+cn_xy_1)
\sqrt{1-\frac{y_1^2}{4}} dy_1,\nonumber
\end{eqnarray}
and likewise for $F_{hv}$ we have
\begin{equation*}
\begin{split}
F_{hv}  & = \frac{h}{\pi} \int_{y_1= - \frac{\tilde{u}}{c}}^{+\infty}  (\tilde{u}+cy_1) (v+cn_yy_1)
\sqrt{1-\frac{y_1^2}{4}} dy_1
.
\end{split}
\end{equation*}
It is possible to obtain explicit formula for the expressions of $F_h$,
$F_{hu}$ and $F_{hv}$ since defining
\begin{equation*}
\begin{split}
& I_1(z) = \int^z  (u+cz) \sqrt{1-\frac{z^2}{4}} dz,\qquad I_2(z) = \int^z  (u+cz)(v+cnz) \sqrt{1-\frac{z^2}{4}} dz ,
\end{split}
\end{equation*}
we have
$$ I_1(z) = -\frac{4c}{3} \left( 1-\frac{z^2}{4} \right)^{3/2}+u \left( \frac{z}{2}\sqrt {1-\frac{
z^2}{4}}+\arcsin \left( \frac{z}{2} \right)  \right)
,$$
and
\begin{multline*}
I_2(z) =-c^{2}n z \left( 1 - \frac{z^2}{4} \right)
^{3/2}+\frac{c^{2}n + uv}{2}z\sqrt { 1 - \frac{z^2}{4} }\\
+( c^{2}n + uv
)\arcsin
\left( \frac{z}{2} \right) 
-\frac{4c(nu+v)}{3} \left( 1 - \frac{z^2}{4}
\right) ^{3/2} .
\end{multline*}
Therefore, it comes
\begin{equation*}
\begin{split}
F_h & = \left\{ \begin{array}{ll}
0 & \mbox{if } - \frac{\tilde{u}}{c} \geq 2\\
\frac{h}{\pi} \tilde u
                  \arcsin(\frac{\tilde u}{2c})+\frac{h \tilde
                  u}{2}+\frac{h}{c}\left(\frac{\tilde u^2}{6}+\frac{4}{3}c^2\right)\chi_0\left(\frac{\tilde
                  u}{c}\right) &\mbox{if }-2\leq -\frac{\tilde{u}}{c} \leq 2\\
h & \mbox{if } - \frac{\tilde{u}}{c} \leq -2
\end{array}\right.\\
F_{hu} & = \left\{ \begin{array}{ll}
0 & \mbox{if } - \frac{\tilde{u}}{c} \geq 2\\
\frac{h}{\pi} (c^2n_x+u \tilde u)
                     \arcsin\left(\frac{\tilde u}{2c}\right)+\frac{h}{2} (c^2
                     n_x+u \tilde u) & \\
\quad +\frac{h}{12c} \left(2 u \tilde u^2 - n_x \tilde u^3 +16c^2 u+10c^2 \tilde u n_x\right)
                     \chi_0\left(\frac{\tilde u}{c}\right) & \mbox{if }-2\leq -\frac{\tilde{u}}{c} \leq 2\\
h u\tilde u + hc^2 n_x & \mbox{if } - \frac{\tilde{u}}{c} \leq -2
\end{array}\right.\\
F_{hv} & = \left\{ \begin{array}{ll}
0 & \mbox{if } - \frac{\tilde{u}}{c} \geq 2\\
\frac{h}{\pi} (c^2n_y+v \tilde u)
                     \arcsin\left(\frac{\tilde u}{2c}\right)+\frac{h}{2} (c^2
                     n_y+v \tilde u) & \\
\quad +\frac{h}{12c} \left(2 v \tilde u^2 - n_y \tilde u^3 +16c^2 v+10c^2 \tilde u n_y\right)
                     \chi_0\left(\frac{\tilde u}{c}\right) & \mbox{if }-2\leq -\frac{\tilde{u}}{c} \leq 2\\
h v\tilde u + hc^2 n_y & \mbox{if } - \frac{\tilde{u}}{c} \leq -2
\end{array}\right.\\
\end{split}
\end{equation*}

\end{document}